\theoremstyle{plain}
\newtheorem{Theorem}{Theorem}[section] 
\newtheorem{Lemma}[Theorem]{Lemma} 
\newtheorem{Proposition}[Theorem]{Proposition}
\newtheorem{Corollary}[Theorem]{Corollary}
\newtheorem{Remark}[Theorem]{Remark}
\newtheorem{Example}[Theorem]{Example}
\numberwithin{equation}{section} 
\begin{document}

\begin{center}
 \textbf{Theory on linear L-fractional differential equations and a new Mittag--Leffler-type function}
\end{center}

\begin{center}
 Marc Jornet
\end{center}

\begin{center}
Departament de Matem\`atiques, Universitat de Val\`encia, 46100 Burjassot, Spain. \\
email: marc.jornet@uv.es \\
ORCID: 0000-0003-0748-3730
\end{center}

\noindent
\textit{Reference:} Fractal Fract. 2024, 8(7), 411; https://doi.org/10.3390/fractalfract8070411

\ \\
\textbf{Abstract.} The L-fractional derivative is defined as a certain normalization of the well-known Caputo derivative, so  alternative properties hold: smoothness and finite slope at the origin for the solution, velocity units for the vector field, and a differential form associated to the system. We develop a theory of this fractional derivative as follows. We prove a fundamental theorem of calculus. We deal with linear systems of autonomous homogeneous parts, which correspond to Caputo linear equations of non-autonomous homogeneous parts. The associated L-fractional integral operator, which is closely related to the beta function and the beta probability distribution, and the estimates for its norm in the Banach space of continuous functions play a key role in the development. The explicit solution is built by means of Picard's iterations from a Mittag--Leffler-type function that mimics the standard exponential function. In the second part of the paper, we address autonomous linear equations of sequential type. We start with sequential order two and then move to arbitrary order by dealing with a power series. The classical theory of linear ordinary differential equations with constant coefficients is generalized, and we establish an analog of the method of undetermined coefficients. The last part of the paper is concerned with sequential linear equations of analytic coefficients and order two. \\
\\
\textit{Keywords: non-integer-order differential equation; Leibniz and Caputo fractional operators; linear and sequential linear equations; Mittag--Leffler function; power series; Picard's iterations} \\
\\
\textit{AMS Classification 2020: 34A08; 34K06; 33E12; 34A25; 60E05}

\section{Introduction} \label{sec_intro}

\subsection{Literature Review}

Fractional calculus is concerned with non-integer differentiation, where the new derivative operator is often presented as an integral expression with respect to a kernel function. The operator depends on the fractional order or index, which may be a real number in $(0,1)$, a real number with no bounds, or even a complex value, and the ordinary derivative is retrieved for order $1$. Good expositions of the topic are given in the monographs~\cite{podl_llibre,kilbas,diethelm_llibre,abbas,yong,ascione}. There are many notions of fractional derivatives, and different approaches and rules have been followed to study these operators and associated differential equations~\cite{tenreiro,ortigueira,teodoro,weber_pap,dieth,dieth2,vanm}. Among all of the definitions, in this paper, we will consider the important Caputo fractional operator, with the consequent Caputo fractional differential equations. This operator was proposed nearly sixty years ago in~\cite{caputo} in the context of viscoelasticity theory. However, it is still of use in current mathematical and applied research; see for example the recent publications~\cite{nou0,nou1,nou2,nou3,nou4,nou5,nou6}. The operator is defined as a convolution with respect to a singular kernel so that a continuous delay is incorporated into the differential equation. This definition brings about a new kind of functional differential equations, which exhibit memory and hereditary effects that may capture different dynamics more flexibly. Compared to the Riemann--Liouville formulation, the ordinary derivative is placed within the integral so that initial conditions are posed as in the classical sense. Due to the applicability of fractional calculus and the similarities with ordinary calculus, some definitions and computations in the literature lack sufficient rigor, as pointed out in~\cite{weber_pap}; thus, we aim at giving precise results, in line with~\cite{diethelm_llibre,weber_pap}, for instance.

Throughout this article, we will be interested in explicit and closed-form solutions to fractional differential equations. In fact, we will build a theory on a new class of fractional differential equations and their corresponding solutions, but details will be given later. {By explicit solution, we mean a state or response function that can be solved and isolated, whereas a closed-form solution refers to a more detailed final expression in terms of the input data.} For Caputo fractional differential equations, there are many works that construct explicit solutions, often in the realm of applicable models. In the homogeneous and autonomous linear case, the solution depends on the most important function in fractional calculus, the (one-parameter) Mittag--Leffler function, which is defined as a power-series expansion that extends the Taylor series of the exponential function~\cite{mitag1,mitag2,mitag3,mitagVan,mitag4}. The theory of fractional Taylor series was first introduced in~\cite{taylor_generi}, where some examples of homogeneous linear equations were shown. For non-homogeneous linear models, the two-parameter Mittag--Leffler function appears in the solution's expression too, within a convolution; this result can be deduced by means of Picard's iterations~\cite{mitag110}. When moving to nonlinear equations, fractional power series may be employed as well, albeit the recursive relation for the expansion's coefficients is not solvable in closed form. Some examples, which were published quite recently, are the logistic equation~\cite{logistic_nieto}, the Bernoulli equation~\cite{ovidio2}, SIS equations~\cite{ovidio}, and general compartmental models with polynomial nonlinearity~\cite{compart_meu}. In fact, the Cauchy--Kovalevskaya theorem has just been proved for systems of Caputo fractional differential equations with analytic inputs~\cite{ck_meu}, hence giving a theoretical justification of the method in general. Since fractional calculus differs from standard calculus (product rule, chain rule, etc.~\cite{arxiv_cr}), the contribution~\cite{ck_meu} circumvents the problems and employs the method of majorants and the implicit-function theorem to achieve a proof of the Cauchy--Kovalevskaya theorem. For variations and generalizations of the Caputo operator, which expand the possible kernel functions, power series also play an important role as well, for instance, for Prabhakar fractional logistic equations~\cite{prab_area} and Caputo generalized proportional fractional logistic equations~\cite{nieto_gener}. Further explicit expressions compared to power-series expansions are not usually available; see the discussions in~\cite{west1,west2,west3}. There are alternative analytical techniques, such as the Laplace-transform method~\cite{nou3,kexue,lapl2,lapl3} (which is often applied with formal calculations), which has even been used in the stochastic sense together with other probabilistic tools~\cite{meu_batim}. As most Caputo models do not possess explicit solutions, numerical schemes have been implemented to compute approximations on mesh discretizations~\cite{garrap1,garrap2,garrap3}. Building numerical solvers for fractional models is much more difficult than in the standard integer-order case due to persistent memory terms. Here, we will not use Laplace transforms or numerical resolutions; we will focus on power-series-related methods instead, with rigorous proofs of convergence.

{Motivated by issues with the Caputo fractional derivative, in this paper, we investigate a variant that has been applied in mechanics, already called the L-fractional derivative by other authors, with associated L-fractional differential equations~\cite{lazo_linear,lazo_linear2}. It has also been introduced in the logistic equation for growth processes~\cite{nito_aml}. The definition is based on normalizing the Caputo operator, so that the fractional derivative of the identity function is $1$. With such an approach, as will be seen, the class of fractional differentiable functions is enlarged from absolute continuity to classical analyticity so that the calculus is less restrictive. It is true that the normalization of fractional derivatives has a straightforward definition, but it gives rise to distinct and interesting geometrical, physical, and qualitative features. Thus, it should be further investigated in theory and in modeling. See~\cite{nito_aml} and the recent arXiv preprint~\cite{rxi_jo}, for example. In contrast to the Caputo derivative, the ordinary derivative of an L-fractional solution is always finite at the initial instant, which likely makes more sense when modeling real dynamics. The L-fractional derivative can be interpreted in terms of differentials~\cite{lazo1,lazo2,cotrill,ada,taras_dif}, with usual units of time$^{-1}$ in the vector field in the model. Thus, the disadvantages of the Caputo derivative are overcome. Although the normalization is directly related to the original Caputo fractional derivative and numerical solvers available for Caputo fractional differential equations are readily extended to the L-fractional situation, the new L-fractional differential equations exhibit many properties, and the search for solutions thus deserves specific attention. We develop a complete theory on linear L-fractional differential equations, with ideas that might be adapted to other fractional operators. Interestingly, the theory provides a new insight into the classical exposition of linear ordinary differential equations, and it gives rise to the definition of a new Mittag--Leffler-type function with a certain power series. As in other treatments for the Caputo derivative~\cite{vatsala,ugurlu}, we deal with sequential-type models by composing the L-fractional derivative.}

A related fractional derivative that could be investigated in the future is the $\Lambda$-fractional derivative, which normalizes the Riemann--Lioville operator instead~\cite{lambda1,lambda2}. 

{In the article, we fix the fractional order $\alpha\in (0,1)$. The case $\alpha=1$ is possible as well, and it corresponds to the classical integer-order setting.}

\subsection{Previous Context} \label{subs_conte}

We base this on the references previously cited. In this paper, all integrals will be understood in the sense of Lebesgue, which may be interpreted as improper Riemann integrals or Riemann integrals under appropriate conditions, for example, the continuity of the integrand. Let $\mathrm{L}^1[0,T]$ be the Lebesgue space of integrable functions on the interval $[0,T]$, $T>0$. If the function $x:[0,T]\rightarrow\mathbb{C}^d$ belongs to $\mathrm{L}^1[0,T]$, then its Riemann--Liouville fractional integral is defined as~\cite{diethelm_llibre,weber_pap}
\begin{equation}
 {}^{RL}\! J^\alpha x(t)=\frac{1}{\Gamma(\alpha)}\int_0^t (t-\tau)^{\alpha-1}x(\tau)\mathrm{d}\tau=\frac{1}{\Gamma(\alpha)}(t^{\alpha-1}\ast x)(t),
 \label{rl_intg}
\end{equation}
where $\alpha\in (0,1)$ $\ast$ is the convolution and
\[ \Gamma(z)=\int_0^\infty \tau^{z-1}\mathrm{e}^{-\tau}\mathrm{d}\tau \]
is the gamma function. The gamma function generalizes the factorial: $\Gamma(n+1)=n!$, for integers $n\geq0$. As $x\in \mathrm{L}^1[0,T]$ and $t^{\alpha-1}\in \mathrm{L}^1[0,T]$, a standard result tells us that the convolution in~\eqref{rl_intg} is defined as an $\mathrm{L}^1[0,T]$ function; in particular, it is pointwise defined almost everywhere on $[0,T]$ (i.e., everywhere except a set of Lebesgue measure zero). Of course, there are functions for which the Riemann--Liouville integral exists for every $t\in [0,T]$. Some texts define~\eqref{rl_intg} whenever the integral exists, but that is certainly imprecise.

We say that $x:[0,T]\rightarrow\mathbb{C}^d$ is absolutely continuous if its derivative $x'$ exists almost everywhere, $x'\in\mathrm{L}^1[0,T]$, and 
\begin{equation} x(t)=x(0)+\int_0^t x'(s)\mathrm{d}s \label{cantorrr} \end{equation}
for all $t\in [0,T]$, i.e., Barrow's rule holds in the Lebesgue sense. These conditions are weaker than the continuous differentiability demanded by Riemann integration. Essentially, we are saying that $x$ belongs to the Sobolev space $W^{1,1}[0,T]$ with values on $\mathbb{C}^d$. The identity~\eqref{cantorrr} is necessary, as the Cantor's function shows. According to \cite{weber_pap} (Proposition~3.2), the operator ${}^{RL}\! J^\alpha$ from~\eqref{rl_intg} maps $W^{1,1}[0,T]$ into $W^{1,1}[0,T]$ (it does not map infinitely differentiable functions $\mathcal{C}^\infty[0,T]$ into continuously differentiable functions $\mathcal{C}^1[0,T]$ in general). For absolutely continuous functions on $[0,T]$, the Riemann--Liouville fractional derivative is defined as~\cite{diethelm_llibre,weber_pap}
\begin{equation}
 {}^{RL}\! D^\alpha x(t)=\frac{\mathrm{d}}{\mathrm{d}t} \,{}^{RL}\! J^{1-\alpha} x(t)=\frac{1}{\Gamma(1-\alpha)}\frac{\mathrm{d}}{\mathrm{d}t}\int_0^t \frac{x(\tau)}{(t-\tau)^\alpha}\mathrm{d}\tau,
 \label{rl_deirv}
\end{equation}
where $\alpha\in (0,1)$ is the fractional order of differentiation. Note that ${}^{RL}\! J^{1-\alpha} x$ is absolutely continuous on $[0,T]$; therefore, it makes sense to differentiate ${}^{RL}\! J^{1-\alpha} x$ almost everywhere on $[0,T]$.

The Caputo fractional derivative is defined as~\cite{diethelm_llibre,weber_pap}
\begin{equation} {}^C\! D^\alpha x(t)={}^{RL}\! J^{1-\alpha} x'(t)=\frac{1}{\Gamma(1-\alpha)}\int_0^t \frac{x'(\tau)}{(t-\tau)^\alpha}\mathrm{d}\tau,
\label{derC} \end{equation}
where $\alpha\in (0,1)$ is the fractional order of differentiation and $t\in [0,T]$. Compared to~\eqref{rl_deirv}, the ordinary derivative is placed within the integral. The operator~\eqref{derC} is a convolution with continuous delay with respect to a singular kernel 
\[ \mathcal{K}(t-\tau)=(t-\tau)^{-\alpha}. \]

Since $x'\in\mathrm{L}^1[0,T]$, the Caputo derivative ${}^{RL}\! J^{1-\alpha} x'(t)$ exists almost everywhere on $[0,T]$, and it belongs to $\mathrm{L}^1[0,T]$. The boundary values of the operator are 
\begin{equation} {}^C\! D^{0^+} x(t)=x(t)-x(0), \label{bailando_AA} \end{equation}
for every $t\in [0,T]$, and if $f$ is continuously differentiable on $[0,T]$ \cite{diethelm_llibre} (page~37),
\begin{equation} {}^C\! D^{1^-} x(t)=x'(t), \label{bailando_BB} \end{equation}
for all $t\in [0,T]$. Then, it interpolates between the discrete difference $x(t)-x(0)=\int_0^t x'(\tau)\mathrm{d}\tau$, which is related to the mean value, and the ordinary derivative $x'(t)$. 

Useful examples of computation for~\eqref{derC} are
\begin{equation} {}^C\! D^\alpha t^\beta=\frac{\Gamma(\beta+1)}{\Gamma(\beta-\alpha+1)}t^{\beta-\alpha} \label{cond1}, \end{equation}
for powers $\beta>0$~\cite{samko}. In particular,
\begin{equation} {}^C\! D^\alpha t=\frac{1}{\Gamma(2-\alpha)}t^{1-\alpha} \label{particularrr} \end{equation}
and
\begin{equation} {}^C\! D^\alpha 1=0. \label{der_ct} \end{equation}

Therefore, while ${}^C\! D^\alpha c=0$ holds for constants $c\in\mathbb{C}$, it is not true that ${}^C\! D^\alpha t=1$.

Motivated by the definition of ordinary differential equations, a Caputo fractional differential equation is an equation of the form
\begin{equation}
 {}^C\! D^\alpha x(t)=f(t,x(t)),
 \label{ode_cap}
\end{equation}
with an initial condition or state $x(0)=x_0$, where $f:[0,T]\times \Omega\subseteq [0,T]\times\mathbb{R}^d\rightarrow\mathbb{R}^d$, or $f:[0,T]\times \Omega\subseteq [0,T]\times\mathbb{C}^d\rightarrow\mathbb{C}^d$, is a continuous function such that $x_0\in\Omega$. Problem~\eqref{ode_cap} can be interpreted in an almost-everywhere sense, considering that ${}^C\! D^\alpha x\in\mathrm{L}^1[0,T]$. As usual, the equation is said to be autonomous if $f(t,x)$ does not depend on $t$ explicitly, i.e., $f(t,x)\equiv f(x)$, so that the involved input parameters are constant. Equation~\eqref{ode_cap} exhibits non-local behavior due to the delay involved in ${}^C\! D^\alpha x(t)$. The units in~\eqref{ode_cap} are time$^{-\alpha}$.

In general, the solution of~\eqref{ode_cap} cannot be twice continuously differentiable on $[0,T]$. Indeed, if it were, then we could apply integration by parts on~\eqref{derC} so that the kernel would become non-singular:
\vspace{6pt}
\begin{equation}
\begin{split}
 {}^C\! D^\alpha x(t)= {} & \frac{1}{\Gamma(1-\alpha)}\left( \frac{t^{1-\alpha}}{1-\alpha}x'(0)+\frac{1}{1-\alpha}\int_0^t (t-\tau)^{1-\alpha}x''(\tau)\mathrm{d}\tau\right) \\
= {} & \frac{1}{\Gamma(2-\alpha)}\left(t^{1-\alpha}x'(0)+\int_0^t (t-\tau)^{1-\alpha}x''(\tau)\mathrm{d}\tau\right).
\end{split}
\label{caputo_parttts}
\end{equation}

Then, at $t=0$, $f(0,x_0)={}^C\! D^\alpha x(0)=0$, which is not often the case. In practice, one has $|x'(0)|=\infty$. This comment highlights the need to consider absolutely continuous functions in the setting of Caputo fractional differential equations.

As  occurs with classical differential equation problems, Caputo Equation~\eqref{ode_cap} does not usually have explicit solutions, and numerical methods must be used. When possible, analytical or semi-analytical techniques that have been employed to derive solutions are Laplace transform and power series. For example, the simplest linear model
\begin{equation}
 {}^C\! D^\alpha x(t)=\lambda x(t),
 \label{ode_cap_lineal}
\end{equation}
where $\lambda\in\mathbb{C}$ and $x(0)=x_0$, can be solved with those techniques. 

The fractional power-series solution (i.e., a power series evaluated at $t^\alpha$)
\begin{equation} x(t)=\sum_{n=0}^\infty x_n (t^{\alpha})^n=\sum_{n=0}^\infty x_n t^{\alpha n}, \label{pw_capi} \end{equation}
where $x_n\in\mathbb{C}$ and $t\geq0$, formally satisfies
\begin{equation} \lambda \sum_{n=0}^\infty x_n t^{\alpha n} = \sum_{n=0}^\infty x_n\cdot {}^C\! D^\alpha t^{\alpha n}= \sum_{n=0}^\infty x_{n+1} \frac{\Gamma((n+1)\alpha+1)}{\Gamma(n\alpha+1)} t^{\alpha n} \label{ja_no_es_faormal} \end{equation}
in~\eqref{ode_cap_lineal}, by~\eqref{cond1}. (We use a centered dot for the notation of the product when there may be confusion with superscripts.) After matching terms,
\begin{equation} x_{n+1}=\frac{\Gamma(n\alpha+1)}{\Gamma((n+1)\alpha+1)}\lambda x_n \label{recur_capi} \end{equation}
is the first-order difference equation for the coefficients. Notice that the property~\eqref{der_ct} is key in the development. The closed-form solution to~\eqref{recur_capi} is
\[ x_n=\frac{\lambda^n}{\Gamma(n\alpha+1)}x_0. \]

The solution~\eqref{pw_capi} is then expressed as
\begin{equation} x(t)=E_{\alpha}(\lambda t^\alpha)x_0, \label{forma_pw} \end{equation}
where
\begin{equation} E_{\alpha}(s)=\sum_{n=0}^\infty \frac{s^n}{\Gamma(n\alpha+1)} \label{mlf} \end{equation}
is the well-known Mittag--Leffler function~\cite{mitag1,mitag2,mitag3,mitag4}. It is an entire function on the complex plane $\mathbb{C}$ and extends the exponential function through its Taylor series.

The Laplace-transform technique can also be used to derive~\eqref{forma_pw} and~\eqref{mlf}. The Laplace transform is defined as
\[ \mathcal{L}[x](s)=\int_0^\infty x(t)\mathrm{e}^{-st}\mathrm{d}t. \]

The most important property of $\mathcal{L}$ is
\begin{equation}
 \mathcal{L}[{}^C\! \mathcal{D}^\alpha x](s)=s^\alpha \mathcal{L}[x](s)-s^{\alpha-1}x(0),
 \label{laplace}
\end{equation}
see~\cite{podl_llibre} (page~81). By applying~\eqref{laplace} into~\eqref{ode_cap_lineal},
\vspace{6pt}
\[ s^\alpha \tilde{x}(s)-s^{\alpha-1}x(0)=\lambda \tilde{x}(s), \]
where $\tilde{x}=\mathcal{L}x$ for simplicity. That is,
\[ \tilde{x}(s)=\frac{s^{\alpha-1}}{s^\alpha-\lambda}x_0. \]

It is known~\cite{podl_llibre} (chapter~4) that
\[ \mathcal{L}[E_{\alpha}(\lambda t^\alpha)](s)=\frac{s^{\alpha-1}}{s^\alpha+\lambda}. \]

Hence,~\eqref{forma_pw} is obtained again.

Problem~\eqref{ode_cap_lineal} and the solution~\eqref{forma_pw} can be extended to the matrix case $\lambda=A\in\mathbb{C}^{d\times d}$. The Mittag--Leffler function~\eqref{mlf} is defined for matrix arguments $s=A\in\mathbb{C}^{d\times d}$, with the same series.

A general result is the following: if
\begin{equation}
 {}^C\! D^\alpha x(t)=A x(t) + b(t),
 \label{ode_cap_lineal_compl}
\end{equation}
where $A\in\mathbb{C}^{d\times d}$ is a matrix and $b:[0,T]\rightarrow\mathbb{C}^d$ is a continuous vector function, then
\begin{equation} x(t)=E_{\alpha}(A t^\alpha)x_0+\int_0^t \tau^{\alpha-1}E_{\alpha,\alpha}(A \tau^\alpha)b(t-\tau)\mathrm{d}\tau=E_{\alpha}(A t^\alpha)x_0+\left(t^{\alpha-1}E_{\alpha,\alpha}(A t^\alpha)\right)\ast b(t), \label{forma_pw_compl} \end{equation}
where 
\begin{equation} E_{\alpha,\beta}(s)=\sum_{n=0}^\infty \frac{s^n}{\Gamma(n\alpha+\beta)} \label{mlf_beta} \end{equation}
is the two-parameter Mittag--Leffler function. The procedure to derive~\eqref{forma_pw_compl} relies on solving Picard's iterative scheme~\cite{mitag110}, via the associated Volterra integral operator
\begin{equation}
 {}^C\! J^\alpha x(t)= \frac{1}{\Gamma(\alpha)}\int_0^t (t-s)^{\alpha-1}x(s)\mathrm{d}s=\frac{1}{\Gamma(\alpha)}t^{\alpha-1}\ast x(t)={}^{RL}\! J^\alpha x(t),
 \label{volt_capi}
\end{equation}
which is defined for integrable or continuous functions on $[0,T]$. Although the properties
\begin{equation}
 {}^C\! J^\alpha \circ {}^C\! D^\alpha x(t)=x(t)-x(0)
 \label{probl1_c}
\end{equation}
and
\begin{equation}
 {}^C\! D^\alpha \circ {}^C\! J^\alpha x(t)=x(t),
 \label{probl2_c}
\end{equation}
where $\circ$ denotes the composition of operators, are often used in the literature without detailed explanations, they deserve an in-depth discussion~\cite{diethelm_llibre,weber_pap} (all this will be carried out in Lemma~\ref{lema_rigor_FC_Cap}, Remarks~\ref{cor_D_s_remarK} and~\ref{rmk_villi_rel}). They are analogous to the relationship between the Lebesgue integral and the standard derivative (Barrow's rule and the fundamental theorem of calculus, respectively). Only in that case,~\eqref{ode_cap} would be equivalent to the fixed-point problem
\begin{equation} x(t)=x_0+{}^C\! J^\alpha f(t,x(t)), \label{fixed_pin} \end{equation}
the details of which can be found in \cite{weber_pap} (Remark~5.2 and Addendum). For the complete linear Equation~\eqref{ode_cap_lineal_compl}, the authors of~\cite{mitag110} define the Picard's iterative scheme from~\eqref{fixed_pin} and then obtain~\eqref{forma_pw_compl} with~\eqref{mlf_beta}.

\subsection{Objectives}

A great deal of research in applied mathematics is concerned with obtaining analytical or semi-analytical solutions of models. The present contribution continues this purpose, with the use of power series for fractional models.

The homogeneous part of Equation~\eqref{ode_cap_lineal_compl}, ${}^C\! D^\alpha x(t)=A x(t)$, is autonomous, meaning that $A$ does not depend on $t$. An aim of our paper is to address a situation of the time dependency of $A$, specifically,
\begin{equation} {}^C\! D^\alpha x(t)=t^{1-\alpha} Ax(t)+b(t), \label{nostre_c} \end{equation}
where $A\in\mathbb{C}^{d\times d}$ is a matrix and $b:[0,T]\rightarrow\mathbb{C}^d$ is a continuous vector function. To the best of our knowledge, this type of model has not previously been solved in the literature in closed form. We also deal with the case in which $b(t)$ is given by certain fractional-power functions, for which specific closed forms of the solution appear. 

The key fact is that~\eqref{nostre_c} can be transformed into a complete linear equation with an autonomous homogeneous part, but with respect to the other fractional derivative, ${}^L\! D^\alpha$. The L-fractional derivative, as will be seen, has many properties that may be advantageous compared with the conventional Caputo derivative. With~\eqref{nostre_c} and this alternative derivative, a new Mittag--Leffler-type function $\mathcal{E}_\alpha$ emerges, with a similar structure to~\eqref{mlf}. This fact opens up a wide range of research possibilities. 

To deal with linear L-fractional differential equations and build their solution with Picard's iterations, the associated L-fractional integral operator, the fundamental theorem of L-fractional calculus, and the estimates for its norm have a relevant role in the development. Due to the form of the kernel function, many of the computations are related to the beta function and the beta probability distribution. Considering this fact, the form of the solution and the proposed Mittag--Leffler-type function are analyzed probabilistically.

In the second part of the paper, we address autonomous linear equations of sequential type to extend scalar homogeneous first-order linear models. We  base it entirely on power-series expressions. The classical theory of linear ordinary differential equations is fully generalized, where the alternative Mittag--Leffler function substitutes the exponential function of the algebraic basis in a suitable way. In the non-homogeneous case, some forcing terms with a special form (polynomials and ordinary derivatives of the Mittag--Leffler-type function) are allowed  to extend the well-known method of undetermined coefficients to the fractional context.

Finally, a class of sequential non-autonomous linear equations is studied of order two and analytic coefficients. The solutions are expressed by means of power series, where the coefficients satisfy recursive relations but are not given in closed form in general. Two important models are illustrated  in the fractional sense: Airy's and Hermite's equation.

The techniques used in the article are essentially based on power series, integral equations and operators, norm estimates, Picard's iterations, probability distributions, and the algebra of vector spaces and operators, in the setting of fractional calculus.

Some  equations related to~\eqref{nostre_c} have been investigated in the literature. For example, papers~\cite{arran_nonh,arran_nonh2} study linear fractional differential equations with variable coefficients, of the Riemann--Liouville and Caputo type. The solutions are given by a convergent infinite series involving compositions of fractional integrals. Our methodology and results are distinct and more specific to L-fractional differential equations. In~\cite{vatsala}, the authors examine the problem ${}^C\! D^\alpha x(t)=\lambda t^\alpha x(t)$, where $\lambda\in\mathbb{C}$, and formally build the fractional power-series solution. In~\cite{vatsala2}, the authors solve the complete non-autonomous linear problem in symbolic form, with a distinct expression for the solution.

\subsection{Organization}

Concisely, the plan of the paper is the following. In Section~\ref{sec_L}, we introduce and work with the alternative L-fractional derivative and pose the linear-equation problem~\eqref{nostre_c} in the setting of L-fractional calculus. In Section~\ref{sec_ml}, we address L-fractional autonomous homogeneous linear equations with power series and define a new Mittag--Leffler-type function. In Section~\ref{sec_integ}, we study the associated integral operator of the L-fractional derivative, with the fundamental theorem of calculus, explicit computations, and norm estimates. This is necessary to solve, in Section~\ref{sec_comple}, the complete linear equation in the L-fractional sense with Picard's iterations, which corresponds to~\eqref{nostre_c}. The form of the solution and the proposed Mittag--Leffler-type function are analyzed with probabilistic arguments. The concrete case of the fractional-power source term is addressed. The uniqueness of the L-fractional solutions is justified and discussed. In Section~\ref{sec_sequ}, we investigate linear L-fractional differential equations of sequential type, with constant coefficients. We start with sequential order two and then turn to any order. By using power series, the main result is the derivation of the algebraic basis of solutions for an arbitrary order, in terms of the alternative Mittag--Leffler function. This is a nice extension of the classical theory. Some non-homogeneous equations are solved, with a generalized method of undetermined coefficients. In Section~\ref{sec_sequ_AA}, the investigation is concerned with linear L-fractional differential equations of the sequential type, with analytic coefficients and order two. Power series are employed again, where the coefficients of the solution satisfy recurrence relations. Lastly, Section~\ref{sec_concl} is devoted to future research lines.

\section{The L-Fractional Derivative and Formulation of the Complete Linear Equation} \label{sec_L}

The (Leibniz) L-fractional derivative of an absolutely continuous function $x:[0,T]\rightarrow\mathbb{C}^d$ is~\cite{lazo_linear,lazo_linear2}
\begin{equation} {}^L\! D^\alpha x(t)=\frac{{}^C\! D^\alpha x(t)}{{}^C\! D^\alpha t}, \label{derL} \end{equation}
where $\alpha\in (0,1)$ is the fractional order of differentiation, $t\in (0,T]$, and ${}^C\! D^\alpha$ is the Caputo fractional derivative~\eqref{derC}. We know that ${}^L\! D^\alpha x$ is defined almost everywhere on $[0,T]$, at least, by the properties of the Riemann--Liouville and Caputo operators. This fractional derivative~\eqref{derL} was envisioned to deal with fractional differentials in geometry~\cite{lazo1,lazo2}, 
\[
 \mathrm{d}^\alpha x(t)={}^L\! D^\alpha x(t)\,\mathrm{d}^\alpha t, 
 \]
and it has recently been utilized in~\cite{nito_aml} for logistic growth.

By~\eqref{particularrr},
\begin{equation} {}^L\! D^\alpha x(t)=\frac{\Gamma(2-\alpha)}{t^{1-\alpha}}\,{}^C\! D^\alpha x(t). \label{conddd2} \end{equation}

Two important properties of the L-fractional derivative are
\begin{equation} {}^L\! D^\alpha 1=0, \label{dLis0} \end{equation}
by~\eqref{der_ct}, and, in contrast to the Caputo derivative,
\[ {}^L\! D^\alpha t=1. \]

Property~\eqref{dLis0} will be very important when dealing with initial states in fractional differential equations and with power series, to derive difference equations for the expansion's coefficients. For the Riemann--Liouville or the $\Lambda$-derivative, the corresponding result~\eqref{dLis0} does not hold.

If 
\[
\Delta_s x(t)=\frac{x(t)-x(s)}{t-s}
\]
is the derivative discretization (mean past velocity over $[s,t]$), then the fractional derivative~\eqref{conddd2} interpolates between 
\[
 \Delta_0 x(t)=\frac{x(t)-x(0)}{t}=\underbrace{\frac{1}{t}\int_0^t x'(\tau)\mathrm{d}\tau}_{\text{mean value of $x'$}}, \text{ when }\alpha\rightarrow0^{+}, 
 \]
and, if $x$ is continuously differentiable on $[0,T]$,
\[
 {\lim_{s\rightarrow t}\Delta_s x(t)=x'(t),} \text{ when }\alpha\rightarrow 1^{-};
 \]
see~\eqref{bailando_AA} and~\eqref{bailando_BB}. We notice that, for the Caputo derivative, the value at $\alpha=0^+$ is $x(t)-x(0)$ instead of $(x(t)-x(0))/t$, which is not the mean value on $[0,t]$ exactly. 

Analogously to~\eqref{ode_cap}, an L-fractional differential equation is
\begin{equation} {}^L\! D^\alpha x(t)=f(t,x(t)), \label{LEDO} \end{equation}
for $t\in (0,T]$, with an initial condition or state $x(0)=x_0$, where $f:[0,T]\times \Omega\subseteq [0,T]\times\mathbb{R}^d\rightarrow\mathbb{R}^d$, or $f:[0,T]\times \Omega\subseteq [0,T]\times\mathbb{C}^d\rightarrow\mathbb{C}^d$, is a continuous function such that $x_0\in\Omega$. We remove $t=0$ from~\eqref{LEDO} by the division of $t^{1-\alpha}$ in~\eqref{conddd2}. In fact, we can interpret~\eqref{LEDO} in the almost-everywhere sense. Due to the close relation between ${}^L\! D^\alpha$ and ${}^C\! D^\alpha$, 
\[ {}^C\! D^\alpha x(t)=\frac{t^{1-\alpha}}{\Gamma(2-\alpha)}f(t,x(t)), \]
there are of course methods and results for Caputo fractional differential equations that readily apply to the L-fractional counterpart. For example, the finite difference scheme from~\cite{garrap2} is suitable, taking $t^{1-\alpha}$ into account. The proof of the Cauchy--Kovalevskaya theorem from~\cite{ck_meu} works as well, just by modifying the gamma-function factor in \cite{ck_meu} (expression~(2.19)), which is bounded too. Despite these matching properties, other topics on L-fractional differential equations deserve further attention; for example, the analysis of associated geometrical/physical features, the attainment of explicit and closed-form solutions, or the applicability in modeling. This paper is devoted to   obtaining solutions, which offers some insight into their behavior and the derivative.

By considering~\eqref{conddd2}, the target Equation~\eqref{nostre_c} can be transformed into
\begin{equation}
 {}^L\! D^\alpha x(t)=\mathcal{A}x(t)+\vartheta(t), \label{model2}
\end{equation}
where $\mathcal{A}\in\mathbb{C}^{d\times d}$ is a matrix and $\vartheta:[0,T]\rightarrow\mathbb{C}^d$ is a continuous function. The relations
\begin{equation} \mathcal{A}=\Gamma(2-\alpha)A,\quad \vartheta(t)=\frac{\Gamma(2-\alpha)}{t^{1-\alpha}}b(t) \label{relation_CL} \end{equation}
hold. The new system~\eqref{model2} has an autonomous homogeneous part, which is a key reduction to solve~\eqref{conddd2}. Due to the equivalence between~\eqref{conddd2} and~\eqref{model2} through~\eqref{relation_CL}, we will work with~\eqref{model2}.

As will be seen, solutions of~\eqref{model2} are $\mathcal{C}^\infty$ and analytic, with power-series expansions expressed in terms of $t^n$, not $t^{\alpha n}$. For L-fractional differential equations, the units of the vector field $f$ are time$^{-1}$, instead of time$^{-\alpha}$.

As the solution $x$ is smooth and not only absolutely continuous, we can conduct integration by parts on~\eqref{derC} and~\eqref{conddd2}, so the equalities~\eqref{caputo_parttts} and
\begin{equation} {}^L\! D^\alpha x(t)=x'(0)+\frac{1}{t^{1-\alpha}}\int_0^t (t-\tau)^{1-\alpha}x''(\tau)\mathrm{d}\tau \label{ja_ve_tr} \end{equation}
hold, pointwise, on $(0,T]$. Thus, these fractional derivatives contain a non-singular kernel function that is continuous on $[0,T]$,
\[ \tilde{\mathcal{K}}(t-\tau)=(t-\tau)^{1-\alpha}, \]
with the second-order derivative of $x$. Nevertheless, the L-fractional derivative has the denominator $t^{1-\alpha}$ that controls ${}^L\! D^\alpha x(t)$ when $t\rightarrow 0^+$, so  
\[ {}^C\! D^\alpha x(0)=0\neq {}^L\! D^\alpha x(0) \]
in general, and no controversies arise at the initial instant. This is a relevant property, considering the documented deficiencies of certain fractional operators with non-singular kernels~\cite{dieth}. As an illustration, the Caputo--Fabrizio derivative~\cite{cf} 
\vspace{6pt}
\[ {}^{CF}\! D^\alpha x(t)=\frac{1}{1-\alpha}\int_{0}^t \mathrm{e}^{-\frac{\alpha}{1-\alpha}(t-s)}x'(s)\mathrm{d}s \]
is always subject to the restriction 
\[ {}^{CF}\! D^\alpha x(0)=0, \]
so for applications on fractional differential equations, one is forced to work with the Losada--Nieto integral problem, which is equivalent to a certain ordinary differential  equation~\cite{losada,losada2,nieto_p2}. For the L-fractional derivative, the factor $1/t^{1-\alpha}$ avoids issues associated with bounded kernels and makes dimensionality consistent  so that the vector field $f$ is a true velocity from a physical viewpoint. In fact, for smooth functions $x$ on $[0,T]$, we have ${}^L\! D^\alpha x(0)=x'(0)$ by~\eqref{ja_ve_tr}, and we can consider $t=0$ in Equation~\eqref{LEDO} as well. Indeed, by translation in the integral (commutativity of the convolution) and L'H\^opital's rule,
\begin{equation}
 \begin{split}
\lim_{t\rightarrow 0^+} {}^L\! D^\alpha x(t)-x'(0)={} & \lim_{t\rightarrow 0^+}\frac{1}{t^{1-\alpha}}\int_0^t (t-\tau)^{1-\alpha}x''(\tau)\mathrm{d}\tau \; \text{ (by~\eqref{ja_ve_tr})} \\
= {} & \lim_{t\rightarrow 0^+}\frac{1}{t^{1-\alpha}}\int_0^t \tau^{1-\alpha}x''(t-\tau)\mathrm{d}\tau \; \text{ (convolution)} \\
= {} & \lim_{t\rightarrow 0^+}\frac{t^\alpha}{1-\alpha}\left( t^{1-\alpha}x''(0) + \int_0^t \tau^{1-\alpha}x'''(t-\tau)\mathrm{d}\tau\right) \; \text{ (L'H\^opital)} \\
= {} & 0.
\end{split}
\label{tre_acabanti}
\end{equation}
In the third equality above, we differentiated the denominator, which gives $(1-\alpha)t^{-\alpha}$, and the numerator, which is a parametric integral. The function ${}^L\! D^\alpha x$ in~\eqref{ja_ve_tr} is then continuous on $[0,T]$.

Table~\ref{tab} reports a schematized comparison between the L- and the Caputo fractional derivatives. It highlights the changes when normalizing the standard operator.

\begin{table}[hbtp!]
\begin{center}
\begin{tabular}{c|cc} 
 & Caputo derivative & L derivative \\ \hline
$\alpha=1$ & $x'(t)$ & $x'(t)$ \\ \hline
$\alpha=0$ & $x(t)-x(0)$ & $(x(t)-x(0))/t$ \\ \hline
$\alpha=0$, $t=0$ & $0$ & $x'(0)$ \\ \hline
derivative of constants & $0$ & $0$ \\ \hline
initial condition & $x(0)=x_0$ & $x(0)=x_0$ \\ \hline
derivative of $t$ & $\neq 1$ & $1$ \\ \hline
power series & fractional ($t^{\alpha n}$) & classical ($t^n$) \\ \hline
regularity of solution & absolutely continuous & smooth \\ \hline
$\alpha\in (0,1)$, $x'(0)$ & $\pm\infty$ & it is ${}^L\! D^\alpha x(0)\in (-\infty,\infty)$ \\ \hline
kernel & singular & singular and non-singular \\ \hline
issues at $t=0$ & no & no \\ \hline
units & time$^{-\alpha}$ & time$^{-1}$ \\ \hline
differential form & $\mathrm{d}^\alpha x(t)/(\mathrm{d} t)^{\alpha}$ & $\mathrm{d}^\alpha x(t)/\mathrm{d}^\alpha t$ \\ \hline
velocity & no & yes \\ \hline
fluxes & no & yes \\ \hline
memory & yes & yes \\ \hline
``exponential'' function & yes (Mittag-Leffler) & yes (another Mittag-Leffler) \\ \hline
\end{tabular}
\caption{Comparison between the Caputo and the L-fractional derivatives and their applications in differential equations.}
\label{tab}
\end{center}
\end{table}

In the notation, we will follow the convention that $\sum_{j=1}^0 =0$ and $\prod_{j=1}^0 =1$; that is, an empty sum is zero and an empty product is one. In the power series, $s^0=1$ for every $s\in\mathbb{C}$, even for $s=0$.

\section{Homogeneous Linear Equation: A New Mittag--Leffler-Type Function} \label{sec_ml}

Let us consider the simplest problem of L-fractional differential equations: 
\begin{equation} {}^L\! D^\alpha x=\lambda x, \label{simplest_L} \end{equation}
where $\lambda\in\mathbb{C}$, $t\geq0$, and the dimension $d$ is $1$. Analogously to Section~\ref{subs_conte}, which was focused on the Caputo setting, we consider a Taylor-series solution, but now in terms of $t^n$ instead of $t^{\alpha n}$. The motivation for this thought is the dimensionality time$^{-1}$ of the problem, instead of time$^{-\alpha}$; see the previous section.

The candidate power-series solution 
\begin{equation} x(t)=\sum_{n=0}^\infty x_n t^{n} \label{pw_Ln} \end{equation} 
satisfies, in a formal sense,
\begin{equation} \lambda \sum_{n=0}^\infty x_n t^{n}=\sum_{n=0}^\infty x_n\cdot {}^L\! D^\alpha(t^{n})=\sum_{n=0}^\infty x_{n+1} \frac{\Gamma(n+2)\Gamma(2-\alpha)}{\Gamma(n+2-\alpha)}t^n, \label{formal_stl}
\end{equation}
as per~\eqref{cond1}. After the terms are equated, the recursive equation for the coefficients is given by
\begin{equation} x_{n+1}=\frac{\Gamma(n+2-\alpha)}{\Gamma(n+2)\Gamma(2-\alpha)}\lambda x_n. \label{xn1_LL} \end{equation}

As it occurs with Caputo fractional equations, the fact that the L-fractional derivative of a constant is zero---see~\eqref{dLis0}---is key to deriving a first-order difference equation. The relation~\eqref{xn1_LL} can be solved:
\[ x_n=\frac{\lambda^n}{\Gamma(2-\alpha)^n \prod_{j=1}^n \frac{\Gamma(j+1)}{\Gamma(j+1-\alpha)}}x_0=\frac{\lambda^n}{\Gamma(2-\alpha)^n\Gamma(1+\alpha)^n \prod_{j=1}^n \binom{j}{j-\alpha}}x_0, \]
where $x_0=x(0)\in\mathbb{C}$ is the initial value. The solution of~\eqref{simplest_L} is thus expressed as
\begin{equation} x(t)=\mathcal{E}_{\alpha}(\lambda t)x_0, \label{xeat} \end{equation}
where
\begin{equation} \mathcal{E}_{\alpha}(s)=\sum_{n=0}^\infty \frac{s^n}{\Gamma(2-\alpha)^n \prod_{j=1}^n \frac{\Gamma(j+1)}{\Gamma(j+1-\alpha)}}=\sum_{n=0}^\infty \frac{s^n}{\Gamma(2-\alpha)^n\Gamma(1+\alpha)^n \prod_{j=1}^n \binom{j}{j-\alpha}}, \label{mlf2} \end{equation}
for $s\in\mathbb{C}$. This is a new extension of the exponential function, an alternative to the Mittag--Leffler formulation~\eqref{mlf}. It is related to the family of functions studied in~\cite{kiria}, with a distinct motivation.

For $\alpha\in (0,1]$, convergence of the new function~\eqref{mlf2} holds on $\mathbb{C}$ by the ratio test:
\begin{equation}
\begin{split}
\lim_{n\rightarrow\infty} \frac{\Gamma(2-\alpha)^n \prod_{j=1}^n \frac{\Gamma(j+1)}{\Gamma(j+1-\alpha)}}{\Gamma(2-\alpha)^{n+1} \prod_{j=1}^{n+1} \frac{\Gamma(j+1)}{\Gamma(j+1-\alpha)}}= {} & \frac{1}{\Gamma(2-\alpha)}\lim_{n\rightarrow\infty} \frac{\Gamma(n+2-\alpha)}{\Gamma(n+2)} \\
= {} & \frac{1}{\Gamma(2-\alpha)}\lim_{n\rightarrow\infty} \frac{1}{(n+2-\alpha)^\alpha} \\
= {} & 0.
\end{split}
 \label{turmell}
\end{equation}

The asymptotic relation 
\begin{equation}
 \Gamma(y+\alpha)\sim \Gamma(y)y^\alpha, 
 \label{saps_quina_e}
\end{equation}
when $y\rightarrow\infty$, which is a consequence of Stirling's formula, has been used. For the standard Mittag--Leffler function~\eqref{mlf}, the corresponding quotient~\eqref{turmell} behaves asymptotically as 
\[ \frac{1}{((n+1)\alpha+1-\alpha)^\alpha}, \]
which is lower by the factor $\Gamma(2-\alpha)\in (0,1)$ compared to our $\mathcal{E}_\alpha$. The fastest rate of convergence occurs for the classical exponential function, when $\alpha=1$, as the corresponding quotient~\eqref{turmell} is $1/(n+1)$ asymptotically.

The boundary values of $\mathcal{E}_\alpha$ are
\[ \mathcal{E}_0(s)=\frac{1}{1-s},\;|s|<1, \]
and
\[ \mathcal{E}_1(s)=\mathrm{e}^s,\; s\in\mathbb{C}. \]

Actually, although the solution~\eqref{xeat} converges by~\eqref{turmell}, it is still formal; see~\eqref{formal_stl}. Later, through the integral operator associated with the L-fractional derivative, we will prove that~\eqref{xeat} is indeed the solution for~\eqref{simplest_L} (Theorem~\ref{th_sol_comp}). For now, in this section, we are only interested in how the new Mittag--Leffler-type function~\eqref{mlf2} is built.

From~\eqref{mlf2}, a nice identity is
\vspace{-6pt}
\[ \mathcal{E}_{1/2}(s)=\sum_{n=0}^\infty \frac{s^n}{2^{n^2}}\prod_{j=1}^n \binom{2j}{j}. \]

This gives a new interpretation of the product of central binomial coefficients, 
\[ \prod_{j=1}^n \binom{2j}{j}, \]
in terms of the power-series solution to the fractional problem
\[ {}^L\! D^{1/2}x=x,\quad x(0)=1. \]

The development of this section can be readily adapted to matrix arguments. Let
\begin{equation} {}^L\! D^\alpha x=\mathcal{A} x, \label{adapted_se} \end{equation}
where $\mathcal{A}\in\mathbb{C}^{d\times d}$ is a matrix and $x$ takes vector values in $\mathbb{C}^d$. Then, the power-series method can be employed, which yields
\begin{equation} x(t)=\mathcal{E}_{\alpha}(\mathcal{A} t)x_0, \label{is_connn} \end{equation}
where $x_0\in\mathbb{C}^d$.

\section{On the Associated Integral Operator} \label{sec_integ}

In this section, we study the integral operator associated with the L-fractional derivative.

\subsection{Introduction}

By~\eqref{volt_capi} and~\eqref{conddd2}, the integral operator associated with ${}^L\! D^\alpha$ is
\vspace{6pt}
\begin{equation}
\begin{split}
 {}^L\! J^\alpha x(t)= {} & \frac{1}{\Gamma(\alpha)\Gamma(2-\alpha)}\int_0^t (t-s)^{\alpha-1}s^{1-\alpha}x(s)\mathrm{d}s \\
 = {} & \frac{t^{\alpha-1}}{\Gamma(\alpha)} \ast \left(\frac{t^{1-\alpha}}{\Gamma(2-\alpha)} x(t)\right) \\
 = {} & {}^C\! J^\alpha \left[ \frac{t^{1-\alpha}}{\Gamma(2-\alpha)} x\right](t).
\end{split}
 \label{convolL}
\end{equation}

If $x\in\mathrm{L}^1[0,T]$, then ${}^L\! J^\alpha x\in\mathrm{L}^1[0,T]$, by standard properties of the convolution. Note that, if $x$ is continuous on $[0,T]$, then ${}^L\! J^\alpha x$ is well defined everywhere on $[0,T]$ and poses no problem at $t=0$. Indeed,
\begin{align*}
 |{}^L\! J^\alpha x(t)|\leq {} & \left( \max_{[0,T]} |x|\right) \frac{T^{1-\alpha}}{\Gamma(\alpha)\Gamma(2-\alpha)}\int_0^t (t-s)^{\alpha-1}\mathrm{d}s \\
= {} & \left( \max_{[0,T]} |x|\right) \frac{T^{1-\alpha}t^\alpha}{\Gamma(\alpha)\Gamma(2-\alpha)\alpha}\stackrel{t\rightarrow0^+}{\longrightarrow}0.
\end{align*}

The same occurs for ${}^C\! J^\alpha x$.

We rigorously prove the L-fractional fundamental theorem of calculus in the following proposition. We first need a lemma on~\eqref{probl1_c} and~\eqref{probl2_c} concerning the Caputo fractional calculus. We  emphasize here the important remarks of~\cite{weber_pap} about the conditions and assumptions in fractional computations, as well as the rigorous results in~\cite{diethelm_llibre}.

\begin{Lemma} \label{lema_rigor_FC_Cap}
If $x:[0,T]\rightarrow\mathbb{C}$ is absolutely continuous, then~\eqref{probl1_c} holds for all $t\in [0,T]$ and~\eqref{probl2_c} holds for almost every $t\in [0,T]$. If $x$ is given by a fractional power series on $[0,T]$ (i.e., a power series evaluated at $t^\alpha$), then~\eqref{probl2_c} is verified at every $t\in [0,T]$.
\end{Lemma}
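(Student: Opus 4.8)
The plan is to reduce both identities to the Riemann--Liouville operators, where ${}^C\! J^\alpha={}^{RL}\! J^\alpha$ and ${}^C\! D^\alpha x={}^{RL}\! J^{1-\alpha}x'$, and to lean on two building blocks: the semigroup law ${}^{RL}\! J^\alpha\circ{}^{RL}\! J^\beta={}^{RL}\! J^{\alpha+\beta}$, valid for all $\alpha,\beta>0$ on $\mathrm{L}^1[0,T]$ (proved by Fubini's theorem together with the beta integral $\int_0^t(t-\tau)^{a-1}\tau^{b-1}\mathrm{d}\tau=\Gamma(a)\Gamma(b)\Gamma(a+b)^{-1}t^{a+b-1}$), and Barrow's rule~\eqref{cantorrr} for absolutely continuous functions. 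For~\eqref{probl1_c}, since $x$ is absolutely continuous we have $x'\in\mathrm{L}^1[0,T]$, so ${}^C\! J^\alpha\circ{}^C\! D^\alpha x={}^{RL}\! J^\alpha\,{}^{RL}\! J^{1-\alpha}x'={}^{RL}\! J^1 x'=\int_0^t x'(s)\,\mathrm{d}s=x(t)-x(0)$. The intermediate Caputo derivative is defined only almost everywhere, but after applying ${}^{RL}\! J^\alpha$ the result is continuous, so the identity holds for every $t\in[0,T]$.

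For~\eqref{probl2_c} under absolute continuity, I would first differentiate the Riemann--Liouville integral. Integrating by parts in ${}^{RL}\! J^\alpha x$ and using $x'\in\mathrm{L}^1[0,T]$, one gets ${}^{RL}\! J^\alpha x(t)=\frac{t^\alpha}{\Gamma(\alpha+1)}x(0)+{}^{RL}\! J^{\alpha+1}x'(t)$; since ${}^{RL}\! J^{\alpha+1}x'={}^{RL}\! J^1\,{}^{RL}\! J^\alpha x'$ is an ordinary primitive, its derivative is ${}^{RL}\! J^\alpha x'$ almost everywhere, whence $\frac{\mathrm{d}}{\mathrm{d}t}{}^{RL}\! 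J^\alpha x=\frac{x(0)}{\Gamma(\alpha)}t^{\alpha-1}+{}^{RL}\! J^\alpha x'$ (a.e.). Applying ${}^{RL}\! J^{1-\alpha}$, which is exactly ${}^C\! D^\alpha$ acting on the absolutely continuous function ${}^{RL}\! J^\alpha x$, splits the result into two pieces: the beta integral gives ${}^{RL}\! J^{1-\alpha}t^{\alpha-1}=\Gamma(\alpha)$, so the first piece equals $x(0)$, while the semigroup law gives ${}^{RL}\! J^{1-\alpha}\,{}^{RL}\! J^\alpha x'={}^{RL}\! J^1 x'=x(t)-x(0)$; adding yields $x(t)$. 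The ``almost everywhere'' is intrinsic here, because ${}^C\! D^\alpha({}^{RL}\! J^\alpha x)$ is defined only through the a.e.-existing derivative of ${}^{RL}\! J^\alpha x$, which by~\cite{weber_pap} (Proposition~3.2) lies in $W^{1,1}[0,T]$ but need not be $\mathcal{C}^1$.

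For the sharper claim under a fractional power series $x(t)=\sum_{n\geq0}c_n t^{\alpha n}$, I would verify~\eqref{probl2_c} termwise. Using~\eqref{cond1}, ${}^C\! J^\alpha t^{\alpha n}=\frac{\Gamma(\alpha n+1)}{\Gamma(\alpha(n+1)+1)}t^{\alpha(n+1)}$, and then ${}^C\! D^\alpha$ of that monomial returns $t^{\alpha n}$ exactly, for every $t\in[0,T]$ including $t=0$. It then remains to interchange both operators with the infinite sum: the integrated series $\sum_n c_n\,{}^C\! J^\alpha t^{\alpha n}$ converges uniformly on $[0,T]$ (the gamma-ratio coefficients decay), and applying ${}^C\! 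D^\alpha$ termwise recovers the original series, which converges uniformly as well; dominated and uniform convergence then justify $\sum_n{}^C\! D^\alpha\,{}^C\! J^\alpha(c_n t^{\alpha n})={}^C\! D^\alpha\,{}^C\! J^\alpha x$, giving the identity pointwise everywhere. Here the extra smoothness of the transformed series, which is $\mathcal{C}^1$ on $(0,T]$ with controlled behaviour at the origin, is what upgrades ``almost every $t$'' to ``every $t$''.

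The step I expect to be the main obstacle is the rigorous differentiation of ${}^{RL}\! J^\alpha x$ for a merely absolutely continuous $x$: handling the boundary term of the integration by parts, confirming that the singular summand $\frac{x(0)}{\Gamma(\alpha)}t^{\alpha-1}$ lies in the domain of ${}^{RL}\! J^{1-\alpha}$, and keeping scrupulous track of the sense (a.e.\ versus everywhere) in which each equality holds; this bookkeeping is precisely what separates the two hypotheses of the lemma. By contrast, the termwise interchange in the power-series case is routine once uniform convergence of the integrated and differentiated series has been established.
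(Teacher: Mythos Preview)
Your argument for~\eqref{probl1_c} is essentially identical to the paper's: both reduce to ${}^{RL}\! J^\alpha\circ{}^{RL}\! J^{1-\alpha}x'={}^{RL}\! J^1 x'$ and invoke Barrow's rule.

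For~\eqref{probl2_c} under absolute continuity, you take a genuinely different route. The paper does not compute directly; instead it passes through the modified Caputo operator ${}^C\! D_\ast^\alpha={}^{RL}\! D^\alpha[\,\cdot\,-x_0]$, uses the known identity ${}^C\! D_\ast^\alpha\circ{}^C\! J^\alpha=\mathrm{Id}$, and then cites an external result (Diethelm, Theorem~3.1) to the effect that ${}^C\! D^\alpha={}^C\! D_\ast^\alpha$ almost everywhere on absolutely continuous functions. Your approach---integrate by parts in ${}^{RL}\! J^\alpha x$ to isolate the term $\frac{x(0)}{\Gamma(\alpha)}t^{\alpha-1}$, differentiate, then apply ${}^{RL}\! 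J^{1-\alpha}$ and use the beta integral plus the semigroup law---is a self-contained computation that avoids both the auxiliary operator and the external citation. It is more elementary and makes the singular boundary contribution explicit; the paper's route is shorter but leans on black-box references. Both correctly identify the ``almost everywhere'' as stemming from the a.e.\ differentiability of ${}^{RL}\! J^\alpha x\in W^{1,1}$.

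For the fractional power-series case, your sketch is in the same spirit as the paper's (which defers the details to a later remark): verify termwise, then upgrade via continuity. The paper is more explicit about one point you leave vague: to show ${}^C\! D^\alpha x$ is continuous on $[0,T]$ (needed to pass from a.e.\ to everywhere), it splits $x=\sum_{n<N_\alpha}x_nt^{\alpha n}+\sum_{n\ge N_\alpha}x_nt^{\alpha n}$ with $\alpha N_\alpha\ge 1$, so the tail lies in $\mathcal{C}^2$ and~\eqref{caputo_parttts} applies. Your phrase ``$\mathcal{C}^1$ on $(0,T]$ with controlled behaviour at the origin'' gestures at this but would need that decomposition to be made precise; also, interchanging ${}^C\! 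D^\alpha$ with the infinite sum cannot be justified by uniform convergence alone (it is not a bounded operator on $\mathcal{C}[0,T]$), which is why the paper first uses the a.e.\ result and then invokes continuity of both sides, rather than commuting the operator through the series directly.
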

\begin{proof}
When $x$ is absolutely continuous, we know that 
\[ y={}^C\! D^\alpha x={}^{RL}\! J^{1-\alpha} x'\in\mathrm{L}^1[0,T] \]
exists almost everywhere. Indeed, recall that ${}^{RL}\! J^{1-\alpha}$ maps $\mathrm{L}^1[0,T]$ into $\mathrm{L}^1[0,T]$. Then,
\[ {}^C\! J^\alpha y={}^{RL}\! J^\alpha y={}^{RL}\! J^\alpha \circ {}^{RL}\! J^{1-\alpha} x'={}^{RL}\! J^1 x'=x-x_0, \]
for all $t\in [0,T]$. We used the integral operators~\eqref{rl_deirv} and~\eqref{volt_capi}, as well as \cite{weber_pap} (Lemma~3.4) for the composition ${}^{RL}\! J^\alpha \circ {}^{RL}\! J^{1-\alpha}$. The idea of this part has been taken from the first paragraph of the proof of \cite{weber_pap} (Theorem 5.1).

On the other hand, we know that ${}^{RL}\! J^{\alpha}x$ is absolutely continuous on $[0,T]$; see \cite{weber_pap} (Proposition~3.2) (it states that ${}^{RL}\! J^{\alpha}$ maps absolutely continuous functions onto absolutely continuous functions, among other results). We also know that
\[ {}^C\! D_{\ast}^\alpha\circ {}^C\! J^\alpha x(t)=x(t) \]
for all $t$ in $[0,T]$, where 
\begin{equation} {}^C\! D_{\ast}^\alpha={}^{RL}\! D^\alpha[x-x_0] \label{modified_cpitt} \end{equation}
is a modified Caputo operator~\cite{diethelm_llibre,weber_pap}. Since ${}^C\! J^\alpha x={}^{RL}\! J^{\alpha}x$ is absolutely continuous, Ref. \cite{diethelm_llibre} (Theorem~3.1) ensures that
\[ {}^C\! D^\alpha\circ {}^C\! J^\alpha x(t)={}^C\! D_{\ast}^\alpha\circ {}^C\! J^\alpha x(t)=x(t) \]
holds almost everywhere.

We remark that, in the literature, one usually finds applications of~\eqref{probl2_c} for every $t$ and when $x$ is merely continuous. This result is not true, because ${}^{RL}\! J^{\alpha}$ does not necessarily map continuous functions into absolutely continuous functions (see \cite{weber_pap} (Addendum~(3)) on a paper by Hardy and Littlewood), and ${}^C\! D^\alpha$ is not identically equal to ${}^C\! D_{\ast}^\alpha$.

The case of $x$ being given by a fractional power series on $[0,T]$ is postponed to  Remark~\ref{cor_D_s_remarK} after Corollary~\ref{cor_D_s}.
\end{proof}

\begin{Proposition} \label{propiidsf}
If $x:[0,T]\rightarrow\mathbb{C}$ is absolutely continuous, then
\begin{equation} {}^L\! J^\alpha \circ {}^L\! D^\alpha x(t)=x(t)-x(0) 
\label{probl1}
\end{equation}
for all $t\in [0,T]$, and 
\begin{equation} {}^L\! D^\alpha \circ {}^L\! J^\alpha x(t)=x(t) 
\label{probl2}
\end{equation}
for almost every $t\in (0,T]$. If $x$ is real analytic at $t=0$ with a radius of convergence $\geq T$, then~\eqref{probl2} is verified at every $t\in [0,T]$.
\end{Proposition}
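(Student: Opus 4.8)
The plan is to reduce every composition to the corresponding Caputo composition, for which Lemma~\ref{lema_rigor_FC_Cap} already supplies the answer, and then to treat the analytic case by a termwise argument on monomials. The two defining identities ${}^L\! D^\alpha x(t)=\frac{\Gamma(2-\alpha)}{t^{1-\alpha}}\,{}^C\! D^\alpha x(t)$ from~\eqref{conddd2} and ${}^L\! J^\alpha x(t)={}^C\! J^\alpha[\frac{t^{1-\alpha}}{\Gamma(2-\alpha)}x](t)$ from~\eqref{convolL} are built precisely so that the weight $t^{1-\alpha}/\Gamma(2-\alpha)$ appearing in the integral cancels the weight $\Gamma(2-\alpha)/t^{1-\alpha}$ appearing in the derivative. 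This cancellation is the engine behind both statements.

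For~\eqref{probl1}, I would substitute the definition of ${}^L\! D^\alpha x$ into ${}^L\! J^\alpha$. Inside the Caputo integral operator the factor $\frac{t^{1-\alpha}}{\Gamma(2-\alpha)}$ multiplies ${}^L\! D^\alpha x(t)=\frac{\Gamma(2-\alpha)}{t^{1-\alpha}}{}^C\! D^\alpha x(t)$, leaving exactly ${}^C\! D^\alpha x(t)$ for almost every $t$, which is all that matters under the integral sign (and makes the integrand genuinely $\mathrm{L}^1$, circumventing the possible non-integrability of ${}^L\! D^\alpha x$ near $0$). Hence ${}^L\! J^\alpha\circ{}^L\! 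D^\alpha x={}^C\! J^\alpha\circ{}^C\! D^\alpha x=x-x(0)$ for all $t\in[0,T]$, the last equality being~\eqref{probl1_c} from the Lemma, valid everywhere for absolutely continuous $x$.

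For~\eqref{probl2}, I would set $g=\frac{t^{1-\alpha}}{\Gamma(2-\alpha)}x$, so that ${}^L\! J^\alpha x={}^C\! J^\alpha g$ and therefore ${}^L\! D^\alpha\circ{}^L\! J^\alpha x=\frac{\Gamma(2-\alpha)}{t^{1-\alpha}}\,{}^C\! D^\alpha\circ{}^C\! J^\alpha g$. The one point requiring care is that Lemma~\ref{lema_rigor_FC_Cap} demands the input of~\eqref{probl2_c} to be absolutely continuous: here I would check that $g$ is absolutely continuous because $t^{1-\alpha}$ is absolutely continuous on $[0,T]$ (its derivative $(1-\alpha)t^{-\alpha}$ lies in $\mathrm{L}^1[0,T]$ since $\alpha<1$) and bounded, $x$ is absolutely continuous and bounded on the compact interval, and a product of bounded absolutely continuous functions is again absolutely continuous. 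Then~\eqref{probl2_c} gives ${}^C\! D^\alpha\circ{}^C\! J^\alpha g=g$ for almost every $t$, and multiplying by the factor $\frac{\Gamma(2-\alpha)}{t^{1-\alpha}}$ recovers $x(t)$ for almost every $t\in(0,T]$, the division being legitimate away from the measure-zero exceptional set and away from $t=0$.

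The analytic case is the substantive part. Writing $x(t)=\sum_{n=0}^\infty a_n t^n$, I would first verify the monomial identity ${}^L\! D^\alpha\circ{}^L\! J^\alpha t^n=t^n$ for every $n$ and every $t\in[0,T]$: using~\eqref{cond1} one computes ${}^L\! J^\alpha t^n=\frac{1}{\Gamma(2-\alpha)}\frac{\Gamma(n+2-\alpha)}{\Gamma(n+2)}t^{n+1}$ and ${}^L\! D^\alpha t^{n+1}=\frac{\Gamma(2-\alpha)\Gamma(n+2)}{\Gamma(n+2-\alpha)}t^{n}$, whose constants telescope to $1$. It then remains to interchange both operators with the infinite sum. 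The interchange for ${}^L\! J^\alpha$ is routine, since it is an integral operator and $\sum a_n t^n$ converges uniformly on compact subintervals of the disk of convergence; this also shows that $y:={}^L\! J^\alpha x$ is itself analytic, with coefficients $a_n\frac{\Gamma(n+2-\alpha)}{\Gamma(2-\alpha)\Gamma(n+2)}$ and, by the asymptotics $\Gamma(n+2-\alpha)/\Gamma(n+2)\sim n^{-\alpha}$ coming from~\eqref{saps_quina_e}, radius of convergence $\geq T$. The delicate step, and the main obstacle, is the interchange of ${}^L\! D^\alpha$ with the sum defining $y$. I would handle it by reducing to Caputo, ${}^L\! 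D^\alpha y=\frac{\Gamma(2-\alpha)}{t^{1-\alpha}}{}^{RL}\! J^{1-\alpha}y'$ via~\eqref{derC}, noting that the termwise derivative $y'$ again converges uniformly on compact subintervals, and then passing the integral operator ${}^{RL}\! J^{1-\alpha}$ through the sum by uniform (equivalently dominated) convergence under the integral. This yields ${}^L\! D^\alpha y=\sum_n a_n\,{}^L\! D^\alpha({}^L\! J^\alpha t^n)=\sum_n a_n t^n=x$ for every $t\in[0,T]$, including $t=0$, where only the $n=0$ term survives and returns $x(0)=a_0$.
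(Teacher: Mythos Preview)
Your proof is correct. For~\eqref{probl1} and the almost-everywhere statement~\eqref{probl2} you do exactly what the paper does: push the weight $t^{1-\alpha}/\Gamma(2-\alpha)$ through so that the L-compositions collapse to the Caputo compositions ${}^C\! J^\alpha\circ{}^C\! D^\alpha$ and ${}^C\! D^\alpha\circ{}^C\! J^\alpha$, then invoke Lemma~\ref{lema_rigor_FC_Cap}. Your verification that $g(t)=\frac{t^{1-\alpha}}{\Gamma(2-\alpha)}x(t)$ is absolutely continuous is more explicit than the paper's one-line assertion, but the argument is the same.

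For the analytic case your route differs slightly from the paper's. The paper defers this to Corollary~\ref{cor_D_s}, where it first uses the already-established almost-everywhere identity~\eqref{probl2} on an auxiliary series to get ${}^L\! D^\alpha x=\sum x_n\,{}^L\! D^\alpha t^n$ a.e., and then upgrades to everywhere by observing that both sides are continuous (via the integration-by-parts form~\eqref{ja_ve_tr} and the limit~\eqref{tre_acabanti}). You instead pass the Riemann--Liouville integral ${}^{RL}\! J^{1-\alpha}$ directly through the termwise-differentiated series for $y'$, which avoids the bootstrap through the a.e.\ result and is arguably cleaner. One small polish: your sentence ``including $t=0$, where only the $n=0$ term survives'' is a bit loose; the precise statement is that $y={}^L\! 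J^\alpha x$ is analytic (hence smooth) with $y(0)=0$, so by~\eqref{tre_acabanti} one has ${}^L\! D^\alpha y(0)=y'(0)=a_0=x(0)$.
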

\begin{proof}
On the one hand, by~\eqref{probl1_c} (see Lemma~\ref{lema_rigor_FC_Cap}),
\begin{equation*}
 \begin{split}
 {}^L\! J^\alpha \circ {}^L\! D^\alpha x(t)= {} & \frac{1}{\Gamma(\alpha)\Gamma(2-\alpha)}\int_0^t (t-s)^{\alpha-1}s^{1-\alpha}\cdot {}^L\! D^\alpha x(s)\mathrm{d}s \\
= {} & \frac{1}{\Gamma(\alpha)\Gamma(2-\alpha)}\int_0^t (t-s)^{\alpha-1}s^{1-\alpha}\cdot \frac{\Gamma(2-\alpha)}{s^{1-\alpha}}\cdot {}^C\! D^\alpha x(s)\mathrm{d}s \\
= {} & \frac{1}{\Gamma(\alpha)}\int_0^t (t-s)^{\alpha-1}\cdot {}^C\! D^\alpha x(s)\mathrm{d}s \\
= {} & {}^C\! J^\alpha\circ {}^C\! D^\alpha x(t) \\
= {} & x(t)-x(0).
\end{split}
\end{equation*}

On the other hand, clearly,
\[ \frac{t^{1-\alpha}}{\Gamma(2-\alpha)} x \]
is absolutely continuous on $[0,T]$. Then, for almost every $t$,
\begin{equation*}
\begin{split}
 {}^L\! D^\alpha \circ {}^L\! J^\alpha x(t)= {} & {}^L\! D^\alpha \circ {}^C\! J^\alpha \left[ \frac{t^{1-\alpha}}{\Gamma(2-\alpha)} x\right](t) \\
= {} & \frac{\Gamma(2-\alpha)}{t^{1-\alpha}}{}^C\! D^\alpha \circ {}^C\! J^\alpha \left[ \frac{t^{1-\alpha}}{\Gamma(2-\alpha)} x\right](t) \\
= {} & \frac{\Gamma(2-\alpha)}{t^{1-\alpha}}\frac{t^{1-\alpha}}{\Gamma(2-\alpha)} x(t) \\
= {} & x(t).
\end{split}
\end{equation*}

We used~\eqref{probl2_c} (see Lemma~\ref{lema_rigor_FC_Cap}). 

The part on $x$ being real analytic will be justified in Corollary~\ref{cor_D_s}.
\end{proof}

We denote
\begin{equation} \underbrace{ {}^L\! J^\alpha \circ \cdots \circ {}^L\! J^\alpha }_{m\text{ times}} ={}^L\! J^{m\circ \alpha},\;\;\; \underbrace{ {}^L\! D^\alpha \circ \cdots \circ {}^L\! D^\alpha }_{m\text{ times}} ={}^L\! D^{m\circ \alpha}, \label{notation_circ} \end{equation}
for $m\geq1$, whenever the compositions make sense. We use this notation to distinguish ${}^L\! J^{m\circ \alpha}$ from ${}^L\! J^{m\alpha}$ and ${}^L\! D^{m\circ \alpha}$ from ${}^L\! D^{m\alpha}$, where $m\alpha$ is another fractional index; see the following proposition. The main ideas are taken from the interesting note~\cite{cao_labora}.
\begin{Proposition}
${}^L\! J^{2\circ \alpha}\neq {}^L\! J^{2\alpha}$ and ${}^L\! D^{2\circ \alpha}\neq {}^L\! D^{2\alpha}$, if $0<\alpha\leq 1/2$.
\end{Proposition}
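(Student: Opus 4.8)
The plan is to refute both inequalities by exhibiting a single test function on which the two sides disagree, exploiting the fact that the L-fractional integral and derivative shift the degree of a monomial by an amount that is \emph{independent} of the fractional order. The structural observation I would isolate first is that, for every order $\gamma\in(0,1]$ and every exponent $\beta\ge 0$, the operator ${}^L\! J^\gamma$ raises the degree of $t^\beta$ by exactly one:
\[
 {}^L\! J^\gamma t^\beta = \frac{\Gamma(2-\gamma+\beta)}{\Gamma(2-\gamma)\,\Gamma(2+\beta)}\, t^{\beta+1},
\]
which follows from the definition~\eqref{convolL} and the Beta integral $\int_0^t (t-s)^{a-1}s^{b-1}\mathrm{d}s = t^{a+b-1}\Gamma(a)\Gamma(b)/\Gamma(a+b)$ with $a=\gamma$, $b=2-\gamma+\beta$; in particular ${}^L\! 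J^\gamma 1 = t$. Dually, combining~\eqref{cond1} with~\eqref{conddd2} shows that ${}^L\! D^\gamma$ lowers the degree by exactly one, ${}^L\! D^\gamma t^\beta = \frac{\Gamma(2-\gamma)\Gamma(\beta+1)}{\Gamma(\beta+1-\gamma)}\,t^{\beta-1}$. The hypothesis $0<\alpha\le 1/2$ guarantees $2\alpha\in(0,1]$, so that ${}^L\! J^{2\alpha}$ and ${}^L\! D^{2\alpha}$ are genuine L-fractional operators to which these formulas apply.

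Granting this, the inequalities follow from a degree count. For the integral statement I would apply both operators to the constant function $x\equiv 1$. The composition raises the degree twice, giving
\[
 {}^L\! J^{2\circ\alpha}1 = {}^L\! J^\alpha\bigl({}^L\! J^\alpha 1\bigr) = {}^L\! J^\alpha t = \frac{2-\alpha}{2}\,t^2,
\]
whereas a single application of order $2\alpha$ raises the degree only once, ${}^L\! J^{2\alpha}1 = t$. A quadratic and a linear function cannot coincide on $[0,T]$, so ${}^L\! J^{2\circ\alpha}\ne{}^L\! J^{2\alpha}$. For the derivative statement I would symmetrically test on $x(t)=t^2$: the composition lowers the degree twice, ${}^L\! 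D^{2\circ\alpha}t^2 = \frac{2}{2-\alpha}$ (a nonzero constant), while ${}^L\! D^{2\alpha}t^2 = \frac{1}{1-\alpha}\,t$ is genuinely linear for $\alpha<1$; again a constant cannot equal a nonconstant linear function, giving ${}^L\! D^{2\circ\alpha}\ne{}^L\! D^{2\alpha}$.

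There is no serious analytic obstacle here; the only point requiring care is the endpoint $\alpha=1/2$, where $2\alpha=1$ and the order-$2\alpha$ operators degenerate to ordinary integration and differentiation. I would check this case explicitly---${}^L\! J^{1}1=\int_0^t 1\,\mathrm{d}s = t$ and ${}^L\! D^{1}t^2 = 2t$---and confirm it agrees with the specialization of the general formulas, so the degree mismatch persists up to and including $\alpha=1/2$. Conceptually, the phenomenon I am relying on is that the L-fractional operators carry the integer dimension time$^{\pm 1}$ for every $\alpha$ (unlike the Caputo operators, whose powers scale as $t^{\pm\alpha}$ and for which the semigroup law ${}^C\! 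J^\alpha\circ{}^C\! J^\alpha = {}^C\! J^{2\alpha}$ does hold); hence composing two L-integrals can never reproduce a single L-integral of doubled order, and likewise for the derivatives.
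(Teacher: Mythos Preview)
Your proof is correct and takes a genuinely different route from the paper's. The paper argues by contradiction on the space $\mathcal{C}^\omega$ of analytic functions: assuming ${}^L\! J^{2\circ\alpha}={}^L\! J^{2\alpha}$, it first deduces ${}^L\! D^{2\circ\alpha}={}^L\! D^{2\alpha}$ via the fundamental theorem of L-calculus, and then reaches a contradiction by taking any $y$ with ${}^L\! D^\alpha y=1$ (surjectivity) and observing that ${}^L\! D^{2\alpha}y={}^L\! D^\alpha\circ{}^L\! D^\alpha y={}^L\! D^\alpha 1=0$ forces $y$ constant (since $\mathrm{Ker}\,{}^L\! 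D^{2\alpha}$ is the constants), whence ${}^L\! D^\alpha y=0\neq 1$. Unpacked, this is implicitly a test on $y=t$, but the paper presents it through the algebraic structure of kernels and surjectivity rather than through an explicit computation.

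Your approach is more elementary and more transparent: you isolate the structural fact that ${}^L\! J^\gamma$ and ${}^L\! D^\gamma$ shift the degree of a monomial by exactly one \emph{regardless of $\gamma$}, and then simply count degrees on the test functions $1$ and $t^2$. This buys you a direct argument with no contradiction, no appeal to the fundamental theorem of L-calculus, and no need to reason about operators on all of $\mathcal{C}^\omega$---two explicit evaluations suffice. The paper's approach, on the other hand, illustrates how the kernel and surjectivity properties of ${}^L\! D^\beta$ on analytic functions can be leveraged, which connects to the broader theme (developed later in the paper) of treating ${}^L\! D^\alpha$ as a linear operator with a one-dimensional kernel. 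Your final conceptual remark, that the failure of the index law stems from the time$^{\pm1}$ dimensionality of the L-operators (in contrast to the Caputo case, where ${}^C\! J^\alpha\circ{}^C\! J^\alpha={}^C\! J^{2\alpha}$), is a nice addition not made explicit in the paper's proof.
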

\begin{proof}
Suppose that 
\[ {}^L\! J^{2\circ \alpha}= {}^L\! J^{2\alpha}. \]

By definition~\eqref{notation_circ}, this means that 
\[ {}^L\! J^\alpha \circ {}^L\! J^\alpha= {}^L\! J^{2\alpha}. \]

Then, by~\eqref{probl1}, 
\[ {}^L\! J^\alpha \circ {}^L\! J^\alpha \circ {}^L\! D^{2\alpha} = {}^L\! J^{2\alpha} \circ {}^L\! D^{2\alpha}=\mathrm{Id}-x_0. \]

By~\eqref{probl2}, 
\begin{equation} {}^L\! D^\alpha \circ {}^L\! D^\alpha= {}^L\! D^{2\alpha}. \label{mis_jih} \end{equation}

This is the negation of the second condition in the proposition. Let us see that we arrive at a contradiction, with an adequate set of functions. We consider the operators from $\mathcal{C}^{\omega}$ to $\mathcal{C}^{\omega}$, where $\mathcal{C}^{\omega}$ is the vector space of real analytic functions at $t=0$ with values in $\mathbb{C}$. According to Proposition~\ref{propiidsf}, the fundamental theorem of calculus holds for \textit{every} 
 point $t$ with functions of $\mathcal{C}^{\omega}$, so the above compositions are justified. Since
\[ {}^L\! D^\beta\left(\sum_{n=0}^\infty y_n t^n\right)=\sum_{n=0}^\infty y_n\cdot {}^L\! D^\beta(t^{n})=\sum_{n=0}^\infty y_{n+1} \frac{\Gamma(n+2)\Gamma(2-\beta)}{\Gamma(n+2-\beta)}t^n, \]
for $0<\beta\leq1$ and $\sum_{n=0}^\infty |y_n| t^n<\infty$ (see the forthcoming Corollary~\ref{cor_D_s} for rigorous details), the operator 
\[ {}^L\! D^\beta:\mathcal{C}^{\omega}\rightarrow\mathcal{C}^{\omega} \]
is surjective and 
\begin{equation} \mathrm{Ker}({}^L\! D^\beta)=\{k:\,k\in\mathbb{C}\}. \label{boni2} \end{equation}

Consequently, given $1\in\mathcal{C}^{\omega}$, there exists $y\in\mathcal{C}^{\omega}$ such that
\begin{equation} 1={}^L\! D^\alpha y. \label{boni1} \end{equation}

Therefore, by~\eqref{boni1} and~\eqref{mis_jih},
\[ 0={}^L\! D^\alpha 1={}^L\! D^\alpha\circ {}^L\! D^\alpha y={}^L\! D^{2\alpha} y, \]
which implies that $y\in \mathrm{Ker}({}^L\! D^{2\alpha})$. Then, $y$ is constant by~\eqref{boni2} and 
\[ {}^L\! D^\alpha y=0, \]
contradicting~\eqref{boni1} and completing the proof.
\end{proof}

Recall~\cite{brezis} that a linear map $\Lambda$ between normed spaces $X$ and $Y$, expressed by $\Lambda:X\rightarrow Y$, is continuous if and only if there exists a constant $K>0$ such that 
\begin{equation} \| \Lambda x\|\leq K\|x\| \label{fita_cont} \end{equation}
for all $x\in X$. In such a case, the induced norm for $\Lambda$ is 
\begin{equation} \|\Lambda\|=\sup_{\|x\|\leq 1} \|\Lambda x\|=\sup_{\|x\|=1} \|\Lambda x\|=\min\{K>0:\,\|\Lambda x\|\leq K\|x\|,\;\forall\,x\in X\}. \label{norma_cont} \end{equation}

We denote by $\mathcal{L}(X,Y)$ the normed space of linear continuous maps from $X$ and $Y$, so that $\Lambda\in\mathcal{L}(X,Y)$. If $Y$ is a Banach space, then $\mathcal{L}(X,Y)$ is Banach too.

Let $|\cdot|$ be the usual Euclidean norm for vectors, which becomes the absolute value for real scalars and the modulus for complex scalars. The induced norm for matrices $A\in\mathbb{C}^{d\times d}$ is also denoted by $|\cdot|$: 
\[ |A|=\sup_{v\in\mathbb{C}^d,\,|v|\leq 1} |Av|=\sup_{v\in\mathbb{C}^d,\,|v|= 1} |Av|. \]

It satisfies the submultiplicative property, namely $|AB|\leq |A||B|$, for all matrices $A$ and $B$. We work with the specific case of $X=Y=\mathcal{C}[0,T]$, which is the Banach space of continuous functions $y=(y_1,\ldots,y_d):[0,T]\rightarrow\mathbb{C}^d$ with the supremum norm
\[ \|y\|_\infty = \max_{t\in [0,T]} |y(t)|. \]

The Banach space $\mathcal{L}(X,Y)$ is then denoted by $\mathcal{L}(\mathcal{C}[0,T])$, with the induced operator's norm $\|\cdot\|_\infty$ defined by~\eqref{norma_cont}.

The set $\mathcal{C}^p[0,T]$, for integers $p\geq1$ or $p=\infty$, is given by the functions that have partial derivatives up to order $p$ and are continuous on $[0,T]$.

\subsection{List of Results}

We state and prove the results that are needed to solve~\eqref{model2}. These are concerned with explicit computations, especially regarding the so-called beta function and norm estimates in $\mathcal{L}(\mathcal{C}[0,T])$.

\begin{Lemma} \label{lema_beta}
If $\delta>\alpha-2$ and $t>0$, then
\[ \int_0^t (t-s)^{\alpha-1} s^{1-\alpha+\delta}\mathrm{d}s=t^{1+\delta}\frac{\Gamma(2-\alpha+\delta)\Gamma(\alpha)}{\Gamma(2+\delta)}. \]
\end{Lemma}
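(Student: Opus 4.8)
The plan is to recognize the left-hand integral as a rescaled Euler beta integral and then apply the beta--gamma identity; the computation is essentially standard, so the real work is only in justifying convergence. First I would perform the change of variables $s = tu$, with $\mathrm{d}s = t\,\mathrm{d}u$, which sends the limits $s=0,t$ to $u=0,1$ and pulls all of the $t$-dependence outside the integral. Writing $(t-s)^{\alpha-1} = t^{\alpha-1}(1-u)^{\alpha-1}$ and $s^{1-\alpha+\delta} = t^{1-\alpha+\delta}u^{1-\alpha+\delta}$, and collecting the exponent of $t$ as $(\alpha-1)+(1-\alpha+\delta)+1 = 1+\delta$, the integral becomes
\[ t^{1+\delta}\int_0^1 u^{(2-\alpha+\delta)-1}(1-u)^{\alpha-1}\,\mathrm{d}u = t^{1+\delta}\,B(2-\alpha+\delta,\alpha), \]
where $B$ denotes the beta function.

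Next I would invoke the classical identity $B(x,y) = \Gamma(x)\Gamma(y)/\Gamma(x+y)$ with the choice $x = 2-\alpha+\delta$ and $y = \alpha$. Since $x+y = 2+\delta$, this immediately produces
\[ t^{1+\delta}\,\frac{\Gamma(2-\alpha+\delta)\Gamma(\alpha)}{\Gamma(2+\delta)}, \]
which is exactly the claimed closed form.

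The only step requiring genuine care---and thus the point I would focus on---is verifying the integrability that makes the beta integral (and hence the beta--gamma identity) legitimate rather than formal, namely that both exponents in the $u$-integrand exceed $-1$. Near $u=1$ the factor $(1-u)^{\alpha-1}$ is integrable because $\alpha-1 > -1$, which holds throughout the paper since $\alpha \in (0,1)$. Near $u=0$ the factor $u^{1-\alpha+\delta}$ is integrable precisely when $1-\alpha+\delta > -1$, that is $\delta > \alpha-2$; this is exactly the hypothesis of the lemma. With both conditions confirmed, the substitution and the beta--gamma evaluation are rigorously justified and the identity follows.
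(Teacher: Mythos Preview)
Your proof is correct and follows essentially the same route as the paper: the substitution $s=tu$ to reduce to $t^{1+\delta}B(2-\alpha+\delta,\alpha)$, followed by the beta--gamma identity. Your explicit check of the integrability conditions near $u=0$ and $u=1$ is a welcome addition that the paper only records implicitly by noting $z_1,z_2>0$.
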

\begin{proof}
We make the change of variable $s=tu$, $\mathrm{d}s=t\mathrm{d}u$, and the resulting integral is related to the beta function
\begin{equation} B(z_1,z_2)=\int_0^1 u^{z_1-1}(1-u)^{z_2-1}\mathrm{d}u, \label{beta1} \end{equation}
defined for complex numbers $z_1$ and $z_2$ such that $\mathrm{Re}(z_1)>0$ and $\mathrm{Re}(z_2)>0$. A key property~\cite{artin} of the beta function is its connection with the gamma function:
\begin{equation} B(z_1,z_2)=\frac{\Gamma(z_1)\Gamma(z_2)}{\Gamma(z_1+z_2)}. \label{beta2} \end{equation}

In our case, we have
\begin{equation*}
\begin{split}
\int_0^t (t-s)^{\alpha-1} s^{1-\alpha+\delta}\mathrm{d}s = {} & t\int_0^1 (t-tu)^{\alpha-1} (tu)^{1-\alpha+\delta}\mathrm{d}u \\
= {} & t^{1+\delta} \int_0^1 (1-u)^{\alpha-1}u^{1-\alpha+\delta}\mathrm{d}u \\
= {} & t^{1+\delta}\frac{\Gamma(2-\alpha+\delta)\Gamma(\alpha)}{\Gamma(2+\delta)},
\end{split}
\end{equation*}
where $z_1=2-\alpha+\delta>0$ and $z_2=\alpha>0$ in the notation of~\eqref{beta1} and~\eqref{beta2}.
\end{proof}

\begin{Proposition} \label{lema_coni}
If $\mathcal{C}[0,T]$ is endowed with the supremum norm $\|\cdot\|_\infty$, then
${}^L\! J^\alpha:\mathcal{C}[0,T]\rightarrow \mathcal{C}[0,T]$ is a continuous operator. Thus, if $\mathcal{L}(\mathcal{C}[0,T])$ denotes the Banach space of linear continuous maps from $\mathcal{C}[0,T]$ to $\mathcal{C}[0,T]$ with the induced norm $\|\cdot\|_\infty$, then ${}^L\! J^\alpha\in \mathcal{L}(\mathcal{C}[0,T])$ and $\|{}^L\! J^\alpha\|_\infty \leq T$.
\end{Proposition}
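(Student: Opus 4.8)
The plan is to show directly that $\|{}^L\! J^\alpha x\|_\infty \leq T\|x\|_\infty$ for every $x\in\mathcal{C}[0,T]$, which by the characterization~\eqref{fita_cont}--\eqref{norma_cont} simultaneously establishes continuity of ${}^L\! J^\alpha$ as an operator on $\mathcal{C}[0,T]$ and gives the claimed bound $\|{}^L\! J^\alpha\|_\infty \leq T$ on the operator norm. The one preliminary point needing care is that ${}^L\! J^\alpha x$ actually lands in $\mathcal{C}[0,T]$, i.e.\ is continuous on all of $[0,T]$ including $t=0$; the estimate at $t\to 0^+$ already displayed in the Introduction to this section (the computation giving $|{}^L\! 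J^\alpha x(t)|\leq (\max|x|)\,T^{1-\alpha}t^\alpha/(\Gamma(\alpha)\Gamma(2-\alpha)\alpha)\to 0$) handles the boundary, and continuity on $(0,T]$ follows from dominated convergence applied to the integral representation~\eqref{convolL}, so I would simply invoke this and move on.

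The core estimate is a pointwise bound on the integral operator. Starting from the first line of~\eqref{convolL},
\[
 |{}^L\! J^\alpha x(t)| \leq \frac{1}{\Gamma(\alpha)\Gamma(2-\alpha)}\int_0^t (t-s)^{\alpha-1}s^{1-\alpha}\,|x(s)|\,\mathrm{d}s
 \leq \frac{\|x\|_\infty}{\Gamma(\alpha)\Gamma(2-\alpha)}\int_0^t (t-s)^{\alpha-1}s^{1-\alpha}\,\mathrm{d}s,
\]
where I have pulled $|x(s)|\leq \|x\|_\infty$ out of the integral. The remaining integral is exactly of the form treated in Lemma~\ref{lema_beta} with $\delta=0$, so it equals $t\,\Gamma(2-\alpha)\Gamma(\alpha)/\Gamma(2)=t\,\Gamma(2-\alpha)\Gamma(\alpha)$. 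Substituting,
\[
 |{}^L\! J^\alpha x(t)| \leq \frac{\|x\|_\infty}{\Gamma(\alpha)\Gamma(2-\alpha)}\cdot t\,\Gamma(2-\alpha)\Gamma(\alpha) = t\,\|x\|_\infty \leq T\,\|x\|_\infty,
\]
the last step using $t\in[0,T]$. Taking the supremum over $t\in[0,T]$ yields $\|{}^L\! J^\alpha x\|_\infty \leq T\|x\|_\infty$ for all $x$, which is the desired bound~\eqref{fita_cont} with $K=T$.

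There is no serious obstacle here; the work is entirely front-loaded into Lemma~\ref{lema_beta}, which is precisely why that lemma was proved first. The only thing to be mildly careful about is the role of the constant: the factor $\Gamma(\alpha)\Gamma(2-\alpha)$ produced by the beta integral cancels exactly against the normalizing prefactor of ${}^L\! J^\alpha$, and the clean bound $T$ emerges because the $\delta=0$ case of the lemma contributes only the linear factor $t$ (the quotient $\Gamma(2-\alpha)\Gamma(\alpha)/\Gamma(2+\delta)$ reduces to $\Gamma(2-\alpha)\Gamma(\alpha)$ since $\Gamma(2)=1$). I would note in passing that this bound is consistent with the known Caputo estimate for ${}^C\! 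J^\alpha$ on $\mathcal{C}[0,T]$, as expected from the relation~\eqref{convolL}, but the self-contained argument via Lemma~\ref{lema_beta} is the cleanest route.
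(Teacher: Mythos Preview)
Your proposal is correct and follows essentially the same route as the paper: the norm bound is obtained exactly as you do, by pulling out $\|x\|_\infty$ and evaluating the remaining integral via Lemma~\ref{lema_beta} with $\delta=0$, yielding the pointwise estimate $|{}^L\! J^\alpha x(t)|\leq t\|x\|_\infty$. The only place the paper is more detailed is in verifying ${}^L\! J^\alpha y\in\mathcal{C}[0,T]$, where it writes out the dominated-convergence argument explicitly (splitting $|{}^L\! J^\alpha y(t+h)-{}^L\! J^\alpha y(t)|$ into an integral over $[t,t+h]$ and one over $[0,t]$), but your sketch of that step is accurate.
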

\begin{proof}
We first check that ${}^L\! J^\alpha$ is well defined from $\mathcal{C}[0,T]$ into $\mathcal{C}[0,T]$. Let $y\in \mathcal{C}[0,T]$. We rewrite the convolution~\eqref{convolL} as
\[ {}^L\! J^\alpha y(t)=\frac{1}{\Gamma(\alpha)\Gamma(2-\alpha)}\int_0^t s^{\alpha-1}(t-s)^{1-\alpha}y(t-s)\mathrm{d}s. \]

If $0<h<1$, then
\begin{align}
 {} & | {}^L\! J^\alpha y(t+h) - {}^L\! J^\alpha y(t)| \label{h0} \\
= {} & \frac{1}{\Gamma(\alpha)\Gamma(2-\alpha)}\left| \int_0^{t+h} s^{\alpha-1}(t+h-s)^{1-\alpha}y(t+h-s)\mathrm{d}s - \int_0^t s^{\alpha-1}(t-s)^{1-\alpha}y(t-s)\mathrm{d}s \right| \nonumber \\
\leq {} & \frac{1}{\Gamma(\alpha)\Gamma(2-\alpha)}\int_t^{t+h} s^{\alpha-1}(t+h-s)^{1-\alpha}|y(t+h-s)|\mathrm{d}s \nonumber \\
{} & + \frac{1}{\Gamma(\alpha)\Gamma(2-\alpha)}\int_0^t s^{\alpha-1} |(t+h-s)^{1-\alpha}y(t+h-s) - (t-s)^{1-\alpha}y(t-s)|\mathrm{d}s \nonumber \\
\leq {} & \frac{1}{\Gamma(\alpha)\Gamma(2-\alpha)} (T+1)\|y\|_\infty \int_t^{t+h} s^{\alpha-1}\mathrm{d}s \label{h1} \\
{} & + \frac{1}{\Gamma(\alpha)\Gamma(2-\alpha)}\int_0^t s^{\alpha-1} |(t+h-s)^{1-\alpha}y(t+h-s) - (t-s)^{1-\alpha}y(t-s)|\mathrm{d}s. \label{h2}
\end{align}
The bound $(t+h-s)^{1-\alpha}\leq (T+1)^{1-\alpha}\leq T+1$ has been used. We analyze the limit of both~\eqref{h1} and~\eqref{h2} when $h\rightarrow0$. On the one hand, for~\eqref{h1},
\[ \int_t^{t+h} s^{\alpha-1}\mathrm{d}s=\frac{(t+h)^\alpha-t^\alpha}{\alpha} \stackrel{h\rightarrow0}{\longrightarrow}0. \]

On the other hand, for~\eqref{h2},
\[ |(t+h-s)^{1-\alpha}y(t+h-s) - (t-s)^{1-\alpha}y(t-s)|\stackrel{h\rightarrow0}{\longrightarrow}0 \]
and 
\begin{align*}
{} & s^{\alpha-1} |(t+h-s)^{1-\alpha}y(t+h-s) - (t-s)^{1-\alpha}y(t-s)| \\
\leq {} & s^{\alpha-1} \left( (t+h-s)^{1-\alpha}|y(t+h-s)| + (t-s)^{1-\alpha} |y(t-s)| \right) \\
\leq {} & 2(T+1)\|y\|_\infty s^{\alpha-1}\in \mathrm{L}^1([0,T],\mathrm{d}s), 
\end{align*}
so the dominated convergence theorem ensures that
\[ \int_0^t s^{\alpha-1} |(t+h-s)^{1-\alpha}y(t+h-s) - (t-s)^{1-\alpha}y(t-s)|\mathrm{d}s \stackrel{h\rightarrow0}{\longrightarrow}0. \]

Thus, from~\eqref{h0},
\[ | {}^L\! J^\alpha y(t+h) - {}^L\! J^\alpha y(t)|\stackrel{h\rightarrow0}{\longrightarrow}0. \]

For $h<0$, one proceeds analogously, and then ${}^L\! J^\alpha\in \mathcal{C}[0,T]$, as wanted.

The linearity of ${}^L\! J^\alpha$ is clear, based on the properties of the integral. Now we prove continuity of ${}^L\! J^\alpha$ by using~\eqref{fita_cont}. If $y\in \mathcal{C}[0,T]$, then
\begin{align}
 |{}^L\! J^\alpha y(t)|\leq {} & \frac{1}{\Gamma(\alpha)\Gamma(2-\alpha)}\int_0^t (t-s)^{\alpha-1}s^{1-\alpha}|y(s)|\mathrm{d}s \nonumber \\
\leq {} & \|y\|_\infty \frac{1}{\Gamma(\alpha)\Gamma(2-\alpha)}\int_0^t (t-s)^{\alpha-1}s^{1-\alpha} \mathrm{d}s \nonumber \\
= {} & \|y\|_\infty \frac{1}{\Gamma(\alpha)\Gamma(2-\alpha)}t\frac{\Gamma(2-\alpha)\Gamma(\alpha)}{\Gamma(2)} \label{first_equ} \\
= {} & t\|y\|_\infty \label{prev_est} \\
\leq {} & T\|y\|_\infty. \nonumber
\end{align}

In the first equality~\eqref{first_equ}, Lemma~\ref{lema_beta} has been employed with $\delta=0>\alpha-2$. Hence,
\[ \|{}^L\! J^\alpha y\|_\infty \leq T\|y\|_\infty, \]
${}^L\! J^\alpha \in \mathcal{L}(\mathcal{C}[0,T])$,
and
\[ \|{}^L\! J^\alpha\|_\infty \leq T, \]
by~\eqref{norma_cont}.
\end{proof}

\begin{Corollary} \label{cor_D_s}
If
\[ \sum_{n=0}^\infty |x_n|t^n<\infty \]
for all $t\in [0,\epsilon]$, where $\epsilon>0$ and $x_n\in\mathbb{C}$, then
\[ {}^L\! D^\alpha \left( \sum_{n=0}^\infty x_n t^n \right)=\sum_{n=0}^\infty x_n \cdot {}^L\! D^\alpha (t^n) \]
on $[0,\epsilon]$. Furthermore,~\eqref{probl2} holds for all $t\in [0,\epsilon]$ for $\sum_{n=0}^\infty x_n t^n$, not just almost everywhere, hence completing the statement of the fundamental theorem of L-fractional calculus; see Proposition~\ref{propiidsf}.
\end{Corollary}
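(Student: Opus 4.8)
The plan is to reduce both assertions to term-by-term manipulations on the monomials $t^n$, exploiting that the operators are singular integrals to which dominated convergence applies. Write $S_N(t)=\sum_{n=0}^N x_nt^n$. The hypothesis $\sum_n|x_n|t^n<\infty$ on $[0,\epsilon]$ gives, via the Weierstrass M-test, uniform convergence $S_N\to x$ on $[0,\epsilon]$, with radius of convergence $\geq\epsilon$. For the first claim I would push ${}^L\! D^\alpha$ through the sum using the representation ${}^L\! D^\alpha=\frac{\Gamma(2-\alpha)}{t^{1-\alpha}}\,{}^C\! D^\alpha$ from~\eqref{conddd2} together with the integral form~\eqref{derC}. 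By linearity, ${}^L\! D^\alpha S_N=\sum_{n=0}^N x_n\,{}^L\! D^\alpha(t^n)$, so the task is to justify ${}^C\! D^\alpha S_N(t_0)\to{}^C\! D^\alpha x(t_0)$ for each fixed $t_0$. Since $S_N'\to x'$ uniformly on every compact subinterval $[0,t_0]\subset[0,\epsilon)$ and the kernel $(t_0-\tau)^{-\alpha}$ is integrable, the interchange is immediate for $t_0<\epsilon$, establishing the first identity on $[0,\epsilon)$.

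The delicate point is the endpoint $t_0=\epsilon$. There I would dominate the integrand by $g(\tau)=\sum_{n\geq1}n|x_n|\tau^{n-1}$ and compute, by a beta-function computation and monotone convergence, that $\int_0^{\epsilon}(\epsilon-\tau)^{-\alpha}g(\tau)\,\mathrm{d}\tau$ is a constant multiple of $\sum_{n\geq1}|x_n|\frac{\Gamma(n+1)}{\Gamma(n+1-\alpha)}\epsilon^{n}$. Because $\Gamma(n+1)/\Gamma(n+1-\alpha)\sim n^\alpha$ by~\eqref{saps_quina_e}, this $\Gamma$-weighted series is precisely the one that must converge for the right-hand side $\sum_n x_n\,{}^L\! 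D^\alpha(t^n)$ to be meaningful at $\epsilon$; in that regime the dominating integral is finite and dominated convergence closes the interchange at $t_0=\epsilon$ as well. I expect the convergence of this differentiated $\Gamma$-weighted series at the endpoint to be the main obstacle, since a bare $\sum|x_n|\epsilon^n<\infty$ does not by itself control the extra factor $n^\alpha$; the assertion at $\epsilon$ is to be read on the range where the series on the right genuinely converges.

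For the second claim I would first evaluate the integral operator on monomials. Lemma~\ref{lema_beta} with $\delta=n$ yields ${}^L\! J^\alpha(t^n)=\frac{\Gamma(n+2-\alpha)}{\Gamma(2-\alpha)\Gamma(n+2)}\,t^{n+1}$, whose coefficient decays like $n^{-\alpha}$, so ${}^L\! J^\alpha x$ is again an absolutely convergent power series on $[0,\epsilon]$. By the continuity and linearity of ${}^L\! J^\alpha$ on $\mathcal{C}[0,T]$ (Proposition~\ref{lema_coni}) and the uniform convergence $S_N\to x$, one gets ${}^L\! J^\alpha x=\sum_n x_n\,{}^L\! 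J^\alpha(t^n)=\sum_n x_n\frac{\Gamma(n+2-\alpha)}{\Gamma(2-\alpha)\Gamma(n+2)}t^{n+1}$. I would then apply the first claim to this series: using ${}^L\! D^\alpha(t^{n+1})=\frac{\Gamma(2-\alpha)\Gamma(n+2)}{\Gamma(n+2-\alpha)}t^{n}$ from~\eqref{cond1} and~\eqref{conddd2}, the $\Gamma$-factors cancel term by term and ${}^L\! D^\alpha\circ{}^L\! J^\alpha x=\sum_n x_nt^n=x$, upgrading the almost-everywhere identity~\eqref{probl2} of Proposition~\ref{propiidsf} to every $t\in[0,\epsilon]$.

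Notably, the endpoint difficulty from the first claim evaporates here: applying the first claim to ${}^L\! J^\alpha x$ produces the weighted factor $\frac{\Gamma(n+2-\alpha)}{\Gamma(2-\alpha)\Gamma(n+2)}\cdot\frac{\Gamma(n+2)}{\Gamma(n+2-\alpha)}=\frac{1}{\Gamma(2-\alpha)}$ times $|x_n|$, so the relevant $\Gamma$-weighted series collapses to $\frac{\epsilon}{\Gamma(2-\alpha)}\sum_n|x_n|\epsilon^n$, finite by hypothesis. In other words, integration smooths by exactly the factor $n^{-\alpha}$ that the subsequent differentiation restores, so the fundamental theorem~\eqref{probl2} holds at every point of $[0,\epsilon]$, endpoint included, with no extra assumption.
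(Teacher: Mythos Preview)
Your argument is correct and, for the first assertion, takes a genuinely different route from the paper. You push ${}^C\! D^\alpha$ through the partial sums directly by dominated convergence on the singular integral~\eqref{derC}. The paper instead works backwards through the integral operator: it writes down the candidate derivative series $\sum_n \tilde{x}_n t^n$ with $\tilde{x}_n=\frac{\Gamma(2-\alpha)\Gamma(n+2)}{\Gamma(n+2-\alpha)}x_{n+1}$, applies ${}^L\! J^\alpha$ to it term by term via the continuity of Proposition~\ref{lema_coni}, recovers $x-x_0$, and only then invokes the almost-everywhere identity~\eqref{probl2} from Proposition~\ref{propiidsf}, upgrading to pointwise equality by the separate continuity argument~\eqref{ja_ve_tr}--\eqref{tre_acabanti}. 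Your direct approach is shorter and bypasses the a.e.\ detour, at the cost of checking the domination hypothesis; the paper's approach trades that for the already-established boundedness of ${}^L\! J^\alpha$ but then needs the smoothness upgrade. Your endpoint caveat is well placed and honest: the $n^\alpha$-weighted series need not converge at $t=\epsilon$ under the bare hypothesis $\sum_n|x_n|\epsilon^n<\infty$, and in fact the paper's own claim that $\sum_n|\tilde{x}_n|t^n<\infty$ on the \emph{closed} interval has the same gap (the ratio argument there controls only the radius, not boundary convergence). For the second assertion your route coincides with the paper's, including the key observation that the $n^{-\alpha}$ gain from ${}^L\! J^\alpha$ exactly cancels the $n^{\alpha}$ cost of ${}^L\! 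D^\alpha$, so~\eqref{probl2} holds at every point of $[0,\epsilon]$ without any endpoint issue.
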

\begin{proof}
Let $x:[0,\epsilon]\rightarrow\mathbb{C}$ be defined by the power series,
\[ x(t)=\sum_{n=0}^\infty x_n t^n. \]

Consider new coefficients
\[ \tilde{x}_n=\frac{\Gamma(2-\alpha)\Gamma(n+2)}{\Gamma(2-\alpha+n)}x_{n+1}, \]
for $n\geq0$. Notice that
\[ \sum_{n=0}^\infty |\tilde{x}_n| t^n<\infty \]
on $[0,\epsilon]$, because
\[ \lim_{n\rightarrow\infty} \frac{\frac{\Gamma(2-\alpha)\Gamma(n+2)}{\Gamma(2-\alpha+n)}}{\frac{\Gamma(2-\alpha)\Gamma(n+1)}{\Gamma(1-\alpha+n)}}=1. \]

Then,
\begin{align}
{}^L\! J^\alpha \left( \sum_{n=0}^\infty \tilde{x}_n t^n\right)= {} & \sum_{n=0}^\infty \tilde{x}_n \cdot {}^L\! J^\alpha(t^n) \label{vest1} \\
= {} & \sum_{n=0}^\infty \tilde{x}_n \frac{\Gamma(2-\alpha+n)}{\Gamma(2-\alpha)\Gamma(n+2)}t^{n+1} \label{vest2} \\
= {} & \sum_{n=0}^\infty x_{n+1}t^{n+1}=x(t)-x_0. \label{vest3}
\end{align}

Equality~\eqref{vest1} holds by Proposition~\ref{lema_coni} (the convergence of $\sum_{n=0}^\infty \tilde{x}_n t^n$ is uniform on $[0,\epsilon]$, i.e., in the space $\mathcal{C}[0,\epsilon]$). In~\eqref{vest2}, the computation in Lemma~\ref{lema_beta} is used. Consequently,
\begin{align}
{}^L\! D^\alpha x(t)= {} & {}^L\! D^\alpha (x-x_0)(t) \nonumber \\
= {} & \sum_{n=0}^\infty \tilde{x}_n t^n \label{estaa1} \\
= {} & \sum_{n=0}^\infty \frac{\Gamma(2-\alpha)\Gamma(n+2)}{\Gamma(2-\alpha+n)}x_{n+1} t^n \nonumber \\
= {} & \sum_{n=0}^\infty x_n\cdot {}^L\! D^\alpha(t^n), \label{estaa3}
\end{align}
almost everywhere. For~\eqref{estaa1}, we use~\eqref{vest3} and~\eqref{probl2}. Now, as $x$ and $t^n$ are smooth, both ${}^L\! D^\alpha x$ and~\eqref{estaa3} are continuous on $[0,\epsilon]$; see~\eqref{ja_ve_tr} and~\eqref{tre_acabanti}. Hence, the previous equality almost everywhere becomes a pointwise equality for every $t\in [0,\epsilon]$. The point $t=0$ does not pose any problem, because 
\[ {}^L\! D^\alpha x(0)=x'(0)=x_1=\sum_{n=0}^\infty x_n\cdot {}^L\! D^\alpha(t^n)|_{t=0}, \]
by~\eqref{ja_ve_tr} and~\eqref{tre_acabanti}.

Finally, we need to check that~\eqref{probl2} holds for all $t\in [0,\epsilon]$, from the obtained results:
\begin{align*}
{}^L\! D^\alpha \circ {}^L\! J^\alpha x(t)= {} & {}^L\! D^\alpha\left[ \sum_{n=0}^\infty x_n \frac{\Gamma(2-\alpha+n)}{\Gamma(2-\alpha)\Gamma(n+2)}t^{n+1}\right] \\
= {} & \sum_{n=0}^\infty x_n \frac{\Gamma(2-\alpha+n)}{\Gamma(2-\alpha)\Gamma(n+2)}\cdot {}^L\! D^\alpha t^{n+1} \\
= {} & \sum_{n=0}^\infty x_n \frac{\Gamma(2-\alpha+n)}{\Gamma(2-\alpha)\Gamma(n+2)}\cdot \frac{\Gamma(2-\alpha)\Gamma(n+2)}{\Gamma(2-\alpha+n)}t^n \\
= {} & \sum_{n=0}^\infty x_n t^n \\
= {} & x(t).
\end{align*}
\end{proof}

\begin{Remark} \label{cor_D_s_remarK}
In 
 the Caputo fractional calculus, the previous Corollary~\ref{cor_D_s} reads as follows:

``If
\[ \sum_{n=0}^\infty |x_n|t^{\alpha n}<\infty \]
for all $t\in [0,\epsilon]$, where $\epsilon>0$ and $x_n\in\mathbb{C}$, then
\begin{equation} {}^C\! D^\alpha \left( \sum_{n=0}^\infty x_n t^{\alpha n} \right)=\sum_{n=0}^\infty x_n \cdot {}^C\! D^\alpha (t^{\alpha n}) \label{ksmdwjws} \end{equation}
on $[0,\epsilon]$. Furthermore,~\eqref{probl2_c} holds for all $t\in [0,\epsilon]$ for $\sum_{n=0}^\infty x_n t^{\alpha n}$, not just almost everywhere, hence completing the statement of the fundamental theorem of Caputo fractional calculus, see Lemma~\ref{lema_rigor_FC_Cap}''.

This property is often used in the literature when solving linear and nonlinear fractional models in the Caputo sense; see, for example,~\eqref{ja_no_es_faormal}. Here, we validate it rigorously based on the operator's theory.

We note that 
\[ x(t)=\sum_{n=0}^\infty x_n t^{\alpha n} \]
is absolutely continuous on $[0,\epsilon]$. Indeed, we decompose $x$ as
\begin{equation} x(t)=\sum_{n=0}^{N_{\alpha}-1} x_n t^{\alpha n}+\sum_{n=N_{\alpha}}^\infty x_n t^{\alpha n}, \label{chalorisdf} \end{equation}
where $N_{\alpha}\geq1$ is an integer satisfying $\alpha\cdot N_{\alpha}\geq1$. The first sum in~\eqref{chalorisdf} is a finite combination of absolutely continuous functions and hence absolutely continuous. The second sum in~\eqref{chalorisdf} is $\mathcal{C}^1[0,\epsilon]$, because the series of ordinary derivatives converges uniformly.

The proof of the corresponding formula
\vspace{6pt}
\[ {}^C\! J^\alpha \left( \sum_{n=0}^\infty \tilde{x}_n t^{\alpha n}\right)=x(t)-x_0, \]
where
\[ \tilde{x}_n=\frac{\Gamma((n+1)\alpha+1)}{\Gamma(n\alpha+1)}x_{n+1}, \]
is analogous to Corollary~\ref{cor_D_s} until~\eqref{vest3}. This is due to the fact that ${}^C\! J^\alpha$ is also an element of $\mathcal{L}(\mathcal{C}[0,\epsilon])$.

Now, the part of the proof until~\eqref{estaa3} in the Caputo setting, which justifies the equality~\eqref{ksmdwjws} almost everywhere on $[0,\epsilon]$, is analogous too. One needs to use~\eqref{probl2_c} from Lemma~\ref{lema_rigor_FC_Cap}. To finally prove that the equality almost everywhere becomes a pointwise equality for every $t\in [0,\epsilon]$, we notice that the right-hand side of~\eqref{ksmdwjws} is clearly continuous (by uniform convergence), and the left-hand side of~\eqref{ksmdwjws} satisfies, at every $t$,
\begin{equation} {}^C\! D^\alpha x(t)=\sum_{n=0}^{N_{\alpha}} x_n\cdot {}^C\! D^\alpha t^{\alpha n}+{}^C\! D^\alpha\left(\sum_{n=N_{\alpha}+1}^\infty x_n t^{\alpha n}\right), \label{uhsdkjk} \end{equation}
as per~\eqref{chalorisdf}. The first sum in~\eqref{uhsdkjk} is finite and continuous. The second part of~\eqref{uhsdkjk} is continuous too, because
\[ \sum_{n=N_{\alpha}+1}^\infty x_n t^{\alpha n}\in\mathcal{C}^2[0,\epsilon] \]
and~\eqref{caputo_parttts} holds. Therefore, both sides of~\eqref{ksmdwjws} are continuous on $[0,\epsilon]$, so we have the equality~\eqref{ksmdwjws} at every $t\in [0,\epsilon]$, as wanted. The remark is concluded.
\end{Remark}

\begin{Proposition} \label{lemma_L_pow}
If $\delta>\alpha-2$, $m\geq1$ and $t>0$, then
\begin{equation}
 {}^L\! J^{m\circ \alpha} t^\delta=\frac{\prod_{i=2}^{m+1} \Gamma(i-\alpha+\delta)}{\Gamma(2-\alpha)^m \prod_{i=2}^{m+1} \Gamma(i+\delta)}t^{m+\delta}. \label{Jtd}
\end{equation}
\end{Proposition}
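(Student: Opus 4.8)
The plan is to prove~\eqref{Jtd} by induction on $m$, with the base case being a single application of ${}^L\! J^\alpha$ to the power $t^\delta$. First I would record the action of the single operator: starting from the convolution form~\eqref{convolL} and invoking Lemma~\ref{lema_beta} with exponent $\delta$, one obtains
\[ {}^L\! J^\alpha t^\delta = \frac{1}{\Gamma(\alpha)\Gamma(2-\alpha)}\int_0^t (t-s)^{\alpha-1}s^{1-\alpha+\delta}\,\mathrm{d}s = \frac{\Gamma(2-\alpha+\delta)}{\Gamma(2-\alpha)\,\Gamma(2+\delta)}\,t^{1+\delta}, \]
where the factor $\Gamma(\alpha)$ cancels and the identity is valid precisely when $\delta>\alpha-2$, which is exactly the hypothesis of Lemma~\ref{lema_beta}. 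Comparing with~\eqref{Jtd} at $m=1$, where both products collapse to the single index $i=2$, confirms the base case.

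For the inductive step I would assume~\eqref{Jtd} for a given $m$ and use the composition convention~\eqref{notation_circ}, namely ${}^L\! J^{(m+1)\circ\alpha}={}^L\! J^\alpha\circ {}^L\! J^{m\circ\alpha}$. Pulling the constant prefactor of the inductive hypothesis outside the linear operator ${}^L\! J^\alpha$ and applying the single-operator formula above to $t^{m+\delta}$ — which is legitimate since $m+\delta>\alpha-2$ holds automatically for $m\geq1$ — yields
\[ {}^L\! J^{(m+1)\circ\alpha} t^\delta = \frac{\prod_{i=2}^{m+1}\Gamma(i-\alpha+\delta)}{\Gamma(2-\alpha)^m\prod_{i=2}^{m+1}\Gamma(i+\delta)}\cdot\frac{\Gamma(2-\alpha+m+\delta)}{\Gamma(2-\alpha)\,\Gamma(2+m+\delta)}\,t^{m+1+\delta}. \]
The remaining task is a bookkeeping check that this coincides with~\eqref{Jtd} for $m+1$: the extra numerator factor equals $\Gamma((m+2)-\alpha+\delta)$, which extends $\prod_{i=2}^{m+1}$ to $\prod_{i=2}^{m+2}$; the factor $\Gamma(2+m+\delta)=\Gamma((m+2)+\delta)$ likewise extends the denominator product to $i=m+2$; and the lone additional $\Gamma(2-\alpha)$ promotes $\Gamma(2-\alpha)^m$ to $\Gamma(2-\alpha)^{m+1}$. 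The power of $t$ advances from $m+\delta$ to $m+1+\delta$, as required.

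I do not anticipate a genuine analytic obstacle; the only point needing care is the domain on which a single ${}^L\! J^\alpha$ acts. Since $\delta$ may be negative, as low as just above $\alpha-2\in(-2,-1)$, the function $t^\delta$ need not be continuous, or even integrable, at the origin; nevertheless the weight $s^{1-\alpha}$ inside~\eqref{convolL} tempers the singularity, and the integral in Lemma~\ref{lema_beta} converges exactly under $\delta>\alpha-2$. After one application the exponent becomes $1+\delta>\alpha-1>-1$, so every subsequent operator acts on an honestly continuous (indeed bounded, once $1+\delta\ge 0$) function, and the reliance on the mapping property of Proposition~\ref{lema_coni} is unproblematic from the second step onward. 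Thus the induction closes and the main effort reduces to the index accounting described above.
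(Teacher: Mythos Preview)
Your argument is correct and essentially identical to the paper's: both proceed by induction on $m$, establish the base case via Lemma~\ref{lema_beta}, and in the inductive step compose one more ${}^L\! J^\alpha$ with the known formula and reapply Lemma~\ref{lema_beta} with the shifted exponent. Your additional remark on the integrability of $t^\delta$ near the origin when $\delta$ is negative is a welcome clarification that the paper leaves implicit.
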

\begin{proof}
By induction on $m$, for $m=1$, we have
\begin{align*}
 {}^L\! J^{\alpha} t^\delta= {} & \frac{1}{\Gamma(\alpha)\Gamma(2-\alpha)}\int_0^t (t-s)^{\alpha-1}s^{1-\alpha}s^\delta\mathrm{d}s \\
= {} & \frac{1}{\Gamma(\alpha)\Gamma(2-\alpha)}\times t^{1+\delta}\frac{\Gamma(2-\alpha+\delta)\Gamma(\alpha)}{\Gamma(2+\delta)} \\
= {} & \frac{\Gamma(2-\alpha+\delta)}{\Gamma(2-\alpha)\Gamma(2+\delta)}t^{1+\delta},
\end{align*}
after applying Lemma~\ref{lema_beta}. Now suppose the result is true for $m-1$ (induction hypothesis). Then,
\begin{align}
 {}^L\! J^{m\circ \alpha} t^\delta= {} & {}^L\! J^\alpha\circ {}^L\! J^{(m-1)\circ \alpha} t^\delta \label{firr_e} \\
= {} & {}^L\! J^\alpha\left( \frac{\prod_{i=2}^{m} \Gamma(i-\alpha+\delta)}{\Gamma(2-\alpha)^{m-1} \prod_{i=2}^{m} \Gamma(i+\delta)}t^{m-1+\delta}\right) \label{seco_eq} \\
= {} & \frac{\prod_{i=2}^{m} \Gamma(i-\alpha+\delta)}{\Gamma(2-\alpha)^{m-1} \prod_{i=2}^{m} \Gamma(i+\delta)} {}^L\! J^\alpha t^{m-1+\delta} \nonumber \\
= {} & \frac{\prod_{i=2}^{m} \Gamma(i-\alpha+\delta)}{\Gamma(2-\alpha)^{m-1} \prod_{i=2}^{m} \Gamma(i+\delta)}\times \frac{1}{\Gamma(\alpha)\Gamma(2-\alpha)}\int_0^t (t-s)^{\alpha-1}s^{1-\alpha}s^{m-1+\delta}\mathrm{d}s \nonumber
 \end{align}
 \begin{align}
= {} & \frac{\prod_{i=2}^{m} \Gamma(i-\alpha+\delta)}{\Gamma(2-\alpha)^{m-1} \prod_{i=2}^{m} \Gamma(i+\delta)} \times \frac{1}{\Gamma(\alpha)\Gamma(2-\alpha)} t^{1+(m-1+\delta)}\frac{\Gamma(2-\alpha+(m-1+\delta))\Gamma(\alpha)}{\Gamma(2+(m-1+\delta))} \label{fif_eq} \\
= {} & \frac{\prod_{i=2}^{m+1} \Gamma(i-\alpha+\delta)}{\Gamma(2-\alpha)^m \prod_{i=2}^{m+1} \Gamma(i+\delta)}t^{m+\delta}. \nonumber
\end{align}
The first equality~\eqref{firr_e} is the definition~\eqref{notation_circ}. In the second equality~\eqref{seco_eq}, the induction hypothesis is employed. In the fifth equality~\eqref{fif_eq}, Lemma~\ref{lema_beta} is used with $m-1+\delta$ instead of $\delta$.
\end{proof}

\begin{Proposition} \label{em_fit_inf}
If $m\geq1$, $t\in [0,T]$ and $y\in\mathcal{C}[0,T]$, then
\begin{equation}
 |{}^L\! J^{m\circ \alpha}y(t)| \leq \|y\|_{\infty}\frac{\prod_{i=2}^{m+1} \Gamma(i-\alpha)}{\Gamma(2-\alpha)^{m} \prod_{i=2}^{m+1} \Gamma(i)}t^m \label{Jtd22ante}
\end{equation}
and
\begin{equation}
 \|{}^L\! J^{m\circ \alpha}\|_{\infty}\leq \frac{\prod_{i=2}^{m+1} \Gamma(i-\alpha)}{\Gamma(2-\alpha)^{m} \prod_{i=2}^{m+1} \Gamma(i)}T^m. \label{Jtd22}
\end{equation}
\end{Proposition}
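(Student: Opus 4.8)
The plan is to exploit the nonnegativity of the kernel of ${}^L\! J^\alpha$ in order to dominate $|{}^L\! J^{m\circ\alpha}y|$ by the value of the same operator applied to the constant function $\|y\|_\infty$, and then to invoke the exact computation of Proposition~\ref{lemma_L_pow} with $\delta=0$. This leverages the fact that the preceding proposition has already done the only nontrivial integral computation.

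First I would record the pointwise domination $|{}^L\! J^\alpha z(t)| \le {}^L\! J^\alpha |z|(t)$ for any $z\in\mathcal{C}[0,T]$, which is immediate from the triangle inequality applied to the integral in~\eqref{convolL}, since the weight $(t-s)^{\alpha-1}s^{1-\alpha}/(\Gamma(\alpha)\Gamma(2-\alpha))$ is nonnegative on $(0,t)$. The same nonnegativity yields monotonicity: if $0\le f\le g$ pointwise, then ${}^L\! J^\alpha f\le {}^L\! J^\alpha g$. Combining these two facts and iterating over the $m$ compositions gives, by a short induction,
\[
 |{}^L\! J^{m\circ\alpha}y(t)| \le {}^L\! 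J^{m\circ\alpha}|y|(t) \le \|y\|_\infty\,{}^L\! J^{m\circ\alpha}(1)(t),
\]
where the last inequality uses $|y(s)|\le\|y\|_\infty$ for all $s$ together with monotonicity.

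The second step is purely a lookup. Applying Proposition~\ref{lemma_L_pow} with $\delta=0$ (admissible since $\alpha\in(0,1)$ forces $0>\alpha-2$) gives
\[
 {}^L\! J^{m\circ\alpha}(1)(t)={}^L\! J^{m\circ\alpha}t^0=\frac{\prod_{i=2}^{m+1}\Gamma(i-\alpha)}{\Gamma(2-\alpha)^m\prod_{i=2}^{m+1}\Gamma(i)}\,t^m,
\]
which is exactly the bound~\eqref{Jtd22ante}. To obtain the operator-norm estimate~\eqref{Jtd22} I would then bound $t^m\le T^m$ on $[0,T]$, take the supremum over $\|y\|_\infty\le1$, and appeal to the characterization of the induced norm in~\eqref{norma_cont}.

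I expect the only delicate point to be justifying the iterated inequality $|{}^L\! J^{m\circ\alpha}y|\le{}^L\! J^{m\circ\alpha}|y|$ cleanly: one must alternate the triangle-inequality step and the monotonicity step at each of the $m$ levels, carrying along the induction hypothesis $|{}^L\! J^{(m-1)\circ\alpha}y|\le{}^L\! J^{(m-1)\circ\alpha}|y|$ before applying the outermost ${}^L\! J^\alpha$. Alternatively, one can bypass the monotonicity bookkeeping entirely by a direct induction on $m$: insert the inductive bound for $|{}^L\! J^{(m-1)\circ\alpha}y(s)|$ (a constant times $s^{m-1}$) into the integral defining the outer ${}^L\! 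J^\alpha$, evaluate that integral with Lemma~\ref{lema_beta} at $\delta=m-1$, and verify that the $\Gamma(\alpha)$ factors cancel while the products telescope into the $i=m+1$ term, reproducing~\eqref{Jtd22ante} directly. Either route is routine once Proposition~\ref{lemma_L_pow} and Lemma~\ref{lema_beta} are in hand; the base case $m=1$ matches the estimate~\eqref{prev_est} already established in the proof of Proposition~\ref{lema_coni}, since $\Gamma(2)=1$.
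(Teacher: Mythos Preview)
Your proposal is correct. Your alternative route---direct induction on $m$, inserting the bound for $|{}^L\! J^{(m-1)\circ\alpha}y(s)|$ into the outer integral and evaluating with Lemma~\ref{lema_beta} at $\delta=m-1$---is precisely the paper's own argument, so on that level the approaches coincide.

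Your primary route differs slightly in packaging: rather than redoing the inductive integral computation from scratch, you first establish the pointwise domination $|{}^L\! J^{m\circ\alpha}y|\le\|y\|_\infty\,{}^L\! J^{m\circ\alpha}(1)$ via kernel positivity and monotonicity, and then read off ${}^L\! J^{m\circ\alpha}(1)$ directly from Proposition~\ref{lemma_L_pow} with $\delta=0$. This is marginally cleaner because it separates the analysis (positivity/monotonicity) from the computation (already done in Proposition~\ref{lemma_L_pow}) and avoids repeating the beta-integral step. The paper's direct induction, on the other hand, is self-contained and makes the telescoping of the gamma factors explicit at each step. Both are routine once the preceding results are available.
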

\begin{proof}
We first notice that ${}^L\! J^{m\circ \alpha}\in\mathcal{L}(\mathcal{C}[0,T])$, by Proposition~\ref{lema_coni} and definition~\eqref{notation_circ}. Second,~\eqref{Jtd22} is a consequence of~\eqref{Jtd22ante}. For~\eqref{Jtd22ante}, we proceed by induction on $m\geq1$. For $m=1$, the result is known by our previous estimate~\eqref{prev_est}. Suppose the inequality for $m-1$ (induction hypothesis), and let us prove it for $m$. We have
\begin{align*}
{}^L\! J^{m\circ \alpha}y(t)= {} & {}^L\! J^\alpha\circ {}^L\! J^{(m-1)\circ \alpha} y(t) \\
= {} & \frac{1}{\Gamma(\alpha)\Gamma(2-\alpha)}\int_0^t (t-s)^{\alpha-1}s^{1-\alpha}\cdot {}^L\! J^{(m-1)\circ \alpha} y(s)\mathrm{d}s.
\end{align*}

By applying $|\cdot|$, we have
\begin{align}
|{}^L\! J^{m\circ \alpha}y(t)|\leq {} & \frac{1}{\Gamma(\alpha)\Gamma(2-\alpha)}\int_0^t (t-s)^{\alpha-1}s^{1-\alpha}|{}^L\! J^{(m-1)\circ \alpha} y(s)|\mathrm{d}s \nonumber \\
\leq {} & \|y\|_\infty \frac{\prod_{i=2}^{m} \Gamma(i-\alpha)}{\Gamma(2-\alpha)^{m-1} \prod_{i=2}^{m} \Gamma(i)} \frac{1}{\Gamma(\alpha)\Gamma(2-\alpha)}\int_0^t (t-s)^{\alpha-1}s^{1-\alpha+(m-1)}\mathrm{d}s \label{hert_dos} \\
= {} & \|y\|_\infty \frac{\prod_{i=2}^{m} \Gamma(i-\alpha)}{\Gamma(2-\alpha)^{m-1} \prod_{i=2}^{m} \Gamma(i)}\frac{1}{\Gamma(\alpha)\Gamma(2-\alpha)}t^{m}\frac{\Gamma(2-\alpha+(m-1))\Gamma(\alpha)}{\Gamma(2+(m-1))} \label{hert_tre} \\
= {} & \|y\|_{\infty}\frac{\prod_{i=2}^{m+1} \Gamma(i-\alpha)}{\Gamma(2-\alpha)^{m} \prod_{i=2}^{m+1} \Gamma(i)}t^m. \nonumber
\end{align}

In the second inequality~\eqref{hert_dos}, the induction hypothesis is used. In the first equality~\eqref{hert_tre}, Lemma~\ref{lema_beta} is employed with $\delta=m$.
\end{proof}

\begin{Proposition} \label{ja_esta}
The series of operators
\vspace{6pt}
\begin{equation} \sum_{j=0}^\infty \mathcal{A}^j \cdot {}^L\! J^{(j+1)\circ \alpha} \label{meitat} \end{equation}
is convergent in $\mathcal{L}(\mathcal{C}[0,T])$.
\end{Proposition}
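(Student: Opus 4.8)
The plan is to prove that the series~\eqref{meitat} is \emph{absolutely} convergent in $\mathcal{L}(\mathcal{C}[0,T])$, which suffices because $\mathcal{L}(\mathcal{C}[0,T])$ is a Banach space (as $\mathcal{C}[0,T]$ is complete), and in any Banach space an absolutely convergent series converges. Thus the whole task reduces to showing that the scalar series of operator norms $\sum_{j=0}^\infty \|\mathcal{A}^j\cdot {}^L\! J^{(j+1)\circ\alpha}\|_\infty$ is finite.

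First I would bound each summand. The operator $\mathcal{A}^j\cdot {}^L\! J^{(j+1)\circ\alpha}$ applies the integral operator ${}^L\! J^{(j+1)\circ\alpha}$ componentwise to the vector-valued input and then multiplies by the constant matrix $\mathcal{A}^j$; since this multiplication commutes with the integral and the induced matrix norm is submultiplicative, Proposition~\ref{em_fit_inf} (estimate~\eqref{Jtd22}, with $m=j+1$) gives
\begin{equation*}
\|\mathcal{A}^j\cdot {}^L\! J^{(j+1)\circ\alpha}\|_\infty \leq |\mathcal{A}|^j\,\|{}^L\! J^{(j+1)\circ\alpha}\|_\infty \leq |\mathcal{A}|^j\,\frac{\prod_{i=2}^{j+2}\Gamma(i-\alpha)}{\Gamma(2-\alpha)^{j+1}\prod_{i=2}^{j+2}\Gamma(i)}\,T^{j+1} =: a_j.
\end{equation*}

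Next I would apply the ratio test to the positive majorant $\sum_j a_j$. Forming the quotient, the two products telescope and leave
\begin{equation*}
\frac{a_{j+1}}{a_j} = |\mathcal{A}|\,T\,\frac{\Gamma(j+3-\alpha)}{\Gamma(2-\alpha)\,\Gamma(j+3)}.
\end{equation*}
Invoking the Stirling asymptotic~\eqref{saps_quina_e} with $y=j+3-\alpha$, namely $\Gamma(j+3)=\Gamma\bigl((j+3-\alpha)+\alpha\bigr)\sim \Gamma(j+3-\alpha)\,(j+3-\alpha)^\alpha$, one obtains
\begin{equation*}
\frac{a_{j+1}}{a_j}\sim \frac{|\mathcal{A}|\,T}{\Gamma(2-\alpha)\,(j+3-\alpha)^\alpha}\stackrel{j\rightarrow\infty}{\longrightarrow}0.
\end{equation*}
This is exactly the computation that underlies the convergence of the Mittag--Leffler-type function~\eqref{mlf2} carried out in~\eqref{turmell}, now weighted by the finite constant $|\mathcal{A}|\,T$. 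Since the limit is $0<1$, the ratio test yields $\sum_j a_j<\infty$, hence $\sum_{j=0}^\infty\|\mathcal{A}^j\cdot {}^L\! J^{(j+1)\circ\alpha}\|_\infty<\infty$ and the series~\eqref{meitat} converges in $\mathcal{L}(\mathcal{C}[0,T])$.

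I do not anticipate a genuine obstacle: the argument is a direct majorization followed by a routine ratio test identical in spirit to~\eqref{turmell}. The only step demanding care is the first inequality, where the norm of the composite operator acting on $\mathbb{C}^d$-valued functions must be decoupled into the matrix factor $|\mathcal{A}|^j$ times the scalar operator norm $\|{}^L\! J^{(j+1)\circ\alpha}\|_\infty$; this relies on $\mathcal{A}^j$ being a constant multiplier that commutes with the integral operator and on submultiplicativity of the induced norm.
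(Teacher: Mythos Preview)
Your proposal is correct and mirrors the paper's own proof essentially step for step: both reduce to absolute convergence in the Banach space $\mathcal{L}(\mathcal{C}[0,T])$, bound $\|\mathcal{A}^j\cdot{}^L\! J^{(j+1)\circ\alpha}\|_\infty$ via $|\mathcal{A}|^j$ times the estimate~\eqref{Jtd22} of Proposition~\ref{em_fit_inf}, and finish with the ratio test using the gamma asymptotic~\eqref{saps_quina_e}, exactly as in~\eqref{turmell}.
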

\begin{proof}
We first notice that $\mathcal{A}^j \cdot {}^L\! J^{(j+1)\circ \alpha}\in \mathcal{L}(\mathcal{C}[0,T])$, with
\[ \|\mathcal{A}^j \cdot {}^L\! J^{(j+1)\circ \alpha}\|_\infty=|\mathcal{A}^j|\|{}^L\! J^{(j+1)\circ \alpha}\|_\infty\leq |\mathcal{A}|^j \|{}^L\! J^{(j+1)\circ \alpha}\|_\infty. \]

The submultiplicative property of the matrix norm has been used. Since $\mathcal{L}(\mathcal{C}[0,T])$ is a Banach space, for~\eqref{meitat}, it suffices to prove that
\begin{equation} \sum_{j=0}^\infty |\mathcal{A}|^j \|{}^L\! J^{(j+1)\circ \alpha}\|_\infty<\infty. \label{series1} \end{equation}

By Proposition~\ref{em_fit_inf}, specifically inequality~\eqref{Jtd22}, we bound the series in~\eqref{series1} as
\begin{equation} \sum_{j=0}^\infty |\mathcal{A}|^j \|{}^L\! J^{(j+1)\circ \alpha}\|_\infty \leq \sum_{j=0}^\infty |\mathcal{A}|^j \frac{\prod_{i=2}^{j+2} \Gamma(i-\alpha)}{\Gamma(2-\alpha)^{j+1} \prod_{i=2}^{j+2} \Gamma(i)}T^{j+1}. \label{ratio}
\end{equation}

To justify the convergence of the right-hand series in~\eqref{ratio}, we employ the ratio test:
\begin{align*}
 \lim_{j\rightarrow\infty} \frac{|\mathcal{A}|^{j+1} \frac{\prod_{i=2}^{j+3} \Gamma(i-\alpha)}{\Gamma(2-\alpha)^{j+2} \prod_{i=2}^{j+3} \Gamma(i)}T^{j+2}}{|\mathcal{A}|^j \frac{\prod_{i=2}^{j+2} \Gamma(i-\alpha)}{\Gamma(2-\alpha)^{j+1} \prod_{i=2}^{j+2} \Gamma(i)}T^{j+1}} {} & =\frac{|\mathcal{A}| T}{\Gamma(2-\alpha)}\lim_{j\rightarrow\infty} \frac{\Gamma(j+3-\alpha)}{\Gamma(j+3)} \\
= {} & \frac{|\mathcal{A}| T}{\Gamma(2-\alpha)}\lim_{j\rightarrow\infty} \frac{1}{\Gamma(j+3-\alpha)^\alpha}=0. 
\end{align*}
\end{proof}

\section{Solution of the Complete Linear Equation} \label{sec_comple}

In this section, we solve the complete linear equation in the L-fractional sense with Picard's iterations. Later, we give a probabilistic form to this solution by using the beta-distributed delay of the L-fractional operators. The new Mittag--Leffler-type function is connected with basic probability theory as well, via generalized moment-generating functions. The concrete case of fractional-power source term is addressed. Finally, the uniqueness of L-fractional solutions is justified and discussed.

\subsection{General Equation and Explicit Solution}

We give the explicit solution to~\eqref{model2}. We remark on the difference between~\eqref{model2} and the integral problem~\eqref{deixar}, considering the absolutely continuous functions (Proposition~\ref{propiidsf}); this is not usually carried out in the literature, which states an equivalence vaguely.

\begin{Proposition} \label{ard}
The new Mittag--Leffler-type function $\mathcal{E}_{\alpha}$---see~\eqref{mlf2}---converges for matrix arguments $s=\mathcal{A}\in\mathbb{C}^{d\times d}$. The convergence for $\mathcal{E}_{\alpha}(\mathcal{A}t)$ is uniform on $[0,T]$, and hence~\eqref{is_connn} belongs to $\mathcal{C}[0,T]$.
\end{Proposition}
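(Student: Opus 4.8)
The plan is to reduce the matrix-valued series to the scalar series already analyzed in~\eqref{turmell}, exploiting the submultiplicative property of the induced matrix norm. Writing the matrix version of~\eqref{mlf2} with $s^n$ replaced by $(\mathcal{A}t)^n=\mathcal{A}^n t^n$, I would bound each term in norm by
\[ \left| \frac{\mathcal{A}^n t^n}{\Gamma(2-\alpha)^n \prod_{j=1}^n \frac{\Gamma(j+1)}{\Gamma(j+1-\alpha)}} \right| \leq \frac{|\mathcal{A}|^n t^n}{\Gamma(2-\alpha)^n \prod_{j=1}^n \frac{\Gamma(j+1)}{\Gamma(j+1-\alpha)}}, \]
using $|\mathcal{A}^n|\leq |\mathcal{A}|^n$. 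The right-hand side is precisely the general term of the scalar series $\mathcal{E}_\alpha$ evaluated at the nonnegative real number $|\mathcal{A}|t$. First I would invoke the ratio-test computation~\eqref{turmell}, which shows that the scalar series defining $\mathcal{E}_\alpha$ converges on all of $\mathbb{C}$, in particular at the real point $|\mathcal{A}|t$ for each fixed $t$. By comparison, the matrix series then converges absolutely in the matrix norm for every $t$, which establishes the first claim that $\mathcal{E}_\alpha$ is well defined at matrix arguments $s=\mathcal{A}$.

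For the uniform convergence of $\mathcal{E}_\alpha(\mathcal{A}t)$ on $[0,T]$, I would apply the Weierstrass M-test. Since $t^n\leq T^n$ for $t\in [0,T]$, each term is dominated by the constant
\[ M_n=\frac{|\mathcal{A}|^n T^n}{\Gamma(2-\alpha)^n \prod_{j=1}^n \frac{\Gamma(j+1)}{\Gamma(j+1-\alpha)}}, \]
which does not depend on $t$, and $\sum_{n=0}^\infty M_n=\mathcal{E}_\alpha(|\mathcal{A}|T)<\infty$ again by~\eqref{turmell}. Hence the series converges uniformly on $[0,T]$.

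Finally, each partial sum is a matrix-valued polynomial in $t$, hence continuous on $[0,T]$, and a uniform limit of continuous functions is continuous; therefore $\mathcal{E}_\alpha(\mathcal{A}t)$ is continuous, and $\mathcal{E}_\alpha(\mathcal{A}t)x_0\in\mathcal{C}[0,T]$, which is~\eqref{is_connn}. I do not anticipate any serious obstacle: the whole argument rests on the earlier scalar ratio test~\eqref{turmell} together with submultiplicativity of the matrix norm, and the only point needing minor care is to phrase the domination so that the M-test bound is genuinely $t$-free, which is secured by replacing $t$ with its maximum value $T$.
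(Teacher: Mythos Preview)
Your proposal is correct and follows essentially the same route as the paper: bound $|\mathcal{A}^n|$ by $|\mathcal{A}|^n$ via submultiplicativity, dominate by the $t$-free majorant with $T^n$, invoke the scalar ratio test~\eqref{turmell}, and conclude continuity from the uniform limit of polynomials. The only cosmetic difference is that you explicitly name the Weierstrass M-test, whereas the paper leaves it implicit.
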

\begin{proof}
For $t\in [0,T]$, we have
\begin{align*}
 \sum_{n=0}^\infty \frac{|A^n|t^n}{\Gamma(2-\alpha)^n \prod_{j=1}^n \frac{\Gamma(j+1)}{\Gamma(j+1-\alpha)}} {} & \leq \sum_{n=0}^\infty \frac{|A|^nt^n}{\Gamma(2-\alpha)^n \prod_{j=1}^n \frac{\Gamma(j+1)}{\Gamma(j+1-\alpha)}} \\
\leq {} & \sum_{n=0}^\infty \frac{|A|^n T^n}{\Gamma(2-\alpha)^n \prod_{j=1}^n \frac{\Gamma(j+1)}{\Gamma(j+1-\alpha)}},
\end{align*}
by the submultiplicative property of the matrix norm. The convergence of the last series, which is independent of $t$, is checked with the ratio test; see~\eqref{turmell}. Thus, the series of $\mathcal{E}_{\alpha}(\mathcal{A}t)$ exhibits uniform convergence on $[0,T]$. In particular, for $t=1$, the function $\mathcal{E}_{\alpha}(\mathcal{A})$ is well defined. Finally, the continuity of $t\mapsto \mathcal{E}_{\alpha}(\mathcal{A}t)$ is clear, because it is the uniform limit of polynomials, which are continuous.
\end{proof}

\begin{Theorem} \label{th_sol_comp}
The solution of
\begin{equation}
 x(t)=x_0+{}^L\! J^\alpha (\mathcal{A}x(t)+\vartheta(t))=x_0+\frac{1}{\Gamma(\alpha)\Gamma(2-\alpha)}\int_0^t (t-s)^{\alpha-1}s^{1-\alpha}(\mathcal{A}x(s)+\vartheta(s))\mathrm{d}s
 \label{deixar}
\end{equation}
on $[0,T]$, with initial condition $x(0)=x_0$, is
\begin{equation} x(t)=\mathcal{E}_{\alpha}(\mathcal{A}t)x_0+\sum_{j=0}^\infty \mathcal{A}^j \cdot {}^L\! J^{(j+1)\circ \alpha}\vartheta(t). \label{imposs} \end{equation}

If $x$ and $\vartheta$ are absolutely continuous on $[0,T]$, then~\eqref{imposs} solves~\eqref{model2} almost everywhere on $[0,T]$. If $x$ and $\vartheta$ are given by power series on $[0,T]$, then~\eqref{imposs} solves~\eqref{model2} for every $t\in [0,T]$.
\end{Theorem}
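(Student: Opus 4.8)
The plan is to treat the integral equation~\eqref{deixar} as a fixed-point problem $x=\Phi(x)$ in the Banach space $\mathcal{C}[0,T]$, where $\Phi(x)=x_0+\mathcal{A}\cdot{}^L\! J^\alpha x+{}^L\! J^\alpha\vartheta$, and to solve it by Picard's iterations starting from the constant $x^{(0)}\equiv x_0$. Since ${}^L\! J^\alpha$ is linear and bounded on $\mathcal{C}[0,T]$ (Proposition~\ref{lema_coni}), an easy induction on $k$, using the composition notation~\eqref{notation_circ}, gives
\[ x^{(k)}=\sum_{j=0}^{k}\mathcal{A}^j\cdot{}^L\! J^{j\circ\alpha}x_0+\sum_{j=0}^{k-1}\mathcal{A}^j\cdot{}^L\! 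J^{(j+1)\circ\alpha}\vartheta, \]
with the convention that ${}^L\! J^{0\circ\alpha}$ is the identity. Before passing to the limit I would identify the homogeneous piece: applying Proposition~\ref{lemma_L_pow} with $\delta=0$ gives ${}^L\! J^{j\circ\alpha}x_0=c_j\,t^j x_0$ with $c_j=\prod_{i=2}^{j+1}\Gamma(i-\alpha)/(\Gamma(2-\alpha)^j\prod_{i=2}^{j+1}\Gamma(i))$, and the reindexing $i=n+1$ shows that $c_j$ is precisely the coefficient of $s^j$ in~\eqref{mlf2}. Hence $\sum_{j=0}^\infty\mathcal{A}^j\cdot{}^L\! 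J^{j\circ\alpha}x_0=\mathcal{E}_{\alpha}(\mathcal{A}t)x_0$, the first term of~\eqref{imposs}.

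Next I would establish convergence and pass to the limit. The homogeneous series converges uniformly on $[0,T]$ by Proposition~\ref{ard}, while $\sum_{j\geq0}\mathcal{A}^j\cdot{}^L\! J^{(j+1)\circ\alpha}$ converges in $\mathcal{L}(\mathcal{C}[0,T])$ by Proposition~\ref{ja_esta}, so $x^{(k)}$ converges in $\mathcal{C}[0,T]$ to the candidate $x^{\ast}$ of~\eqref{imposs}. Rather than invoke continuity of $\Phi$ abstractly, I find it cleanest to verify directly that $x^\ast$ is a fixed point: applying the bounded operator ${}^L\! J^\alpha$ term by term (justified by the norm convergence above) and multiplying by $\mathcal{A}$ shifts every composition index up by one,
\[ \mathcal{A}\cdot{}^L\! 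J^\alpha x^{\ast}=\sum_{m=1}^\infty\mathcal{A}^m\cdot{}^L\! J^{m\circ\alpha}x_0+\sum_{j=1}^\infty\mathcal{A}^j\cdot{}^L\! J^{(j+1)\circ\alpha}\vartheta; \]
adding $x_0=\mathcal{A}^0\cdot{}^L\! J^{0\circ\alpha}x_0$ restores the missing $m=0$ term of the homogeneous series, and adding ${}^L\! J^\alpha\vartheta=\mathcal{A}^0\cdot{}^L\! J^{1\circ\alpha}\vartheta$ restores the $j=0$ term of the particular series, so $\Phi(x^{\ast})=x^{\ast}$; that is, $x^{\ast}$ solves~\eqref{deixar}.

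For the two differential-equation statements I would apply ${}^L\! D^\alpha$ to both sides of~\eqref{deixar}. Since ${}^L\! D^\alpha x_0=0$ by~\eqref{dLis0}, it remains to evaluate ${}^L\! D^\alpha\circ{}^L\! J^\alpha(\mathcal{A}x+\vartheta)$. When $x$ and $\vartheta$ are absolutely continuous, so is $\mathcal{A}x+\vartheta$, and the fundamental theorem of L-fractional calculus~\eqref{probl2} (Proposition~\ref{propiidsf}) yields ${}^L\! D^\alpha\circ{}^L\! J^\alpha(\mathcal{A}x+\vartheta)=\mathcal{A}x+\vartheta$ for almost every $t$, whence ${}^L\! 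D^\alpha x=\mathcal{A}x+\vartheta$ a.e., which is~\eqref{model2}. When $x$ and $\vartheta$ are power series, $\mathcal{A}x+\vartheta$ is a power series, so the stronger pointwise form of~\eqref{probl2} from Corollary~\ref{cor_D_s} (equivalently the analytic clause of Proposition~\ref{propiidsf}) upgrades the identity to every $t\in[0,T]$.

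I expect the main obstacles to be the bookkeeping of the composition indices and, above all, the justification for interchanging ${}^L\! J^\alpha$ with the infinite sums; this is exactly where the operator-norm estimates of Proposition~\ref{em_fit_inf} and the convergence in $\mathcal{L}(\mathcal{C}[0,T])$ of Proposition~\ref{ja_esta} are indispensable, as they let me treat the expressions as convergent sums of bounded operators rather than merely formal series. Uniqueness of the solution of~\eqref{deixar}, implicit in the phrase ``the solution,'' is not needed for the construction but follows from the same estimates: if $x_1,x_2$ both solve~\eqref{deixar}, their difference $w$ satisfies $w=\mathcal{A}\cdot{}^L\! J^\alpha w=(\mathcal{A}\cdot{}^L\! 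J^\alpha)^n w=\mathcal{A}^n\cdot{}^L\! J^{n\circ\alpha}w$ for every $n$, and $\|\mathcal{A}^n\cdot{}^L\! J^{n\circ\alpha}\|_\infty\leq|\mathcal{A}|^n\|{}^L\! J^{n\circ\alpha}\|_\infty\to0$ by Proposition~\ref{em_fit_inf}, forcing $w\equiv0$.
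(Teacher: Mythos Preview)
Your proposal is correct and follows essentially the same route as the paper: Picard iteration from $x_0$, identification of the iterates via Proposition~\ref{lemma_L_pow}, uniform convergence through Propositions~\ref{ard} and~\ref{ja_esta}, and then Proposition~\ref{propiidsf}/Corollary~\ref{cor_D_s} for the two differential-equation clauses. The only cosmetic difference is that the paper verifies the fixed-point identity by passing to the limit in the recurrence $x_k=\Phi(x_{k-1})$ using continuity of ${}^L\! J^\alpha$, whereas you substitute $x^\ast$ directly and reindex; your added uniqueness remark is not in the paper's proof here but appears later in Section~5.4 with the same iteration-of-${}^L\! J^{n\circ\alpha}$ argument.
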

\begin{proof}
Since $\vartheta\in\mathcal{C}[0,T]$, the function $x$ in~\eqref{imposs} belongs to $\mathcal{C}[0,T]$, by Propositions~\ref{ja_esta} and~\ref{ard}. We need to check that $x$ in~\eqref{imposs} is a fixed point of the associated Volterra integral operator~\eqref{deixar}. We build the solution to~\eqref{deixar} with Picard's iteration method:
\begin{equation}
 x_k(t)=x_0+{}^L\! J^\alpha (\mathcal{A}x_{k-1}(t)+\vartheta(t))=x_0+\frac{1}{\Gamma(\alpha)\Gamma(2-\alpha)}\int_0^t (t-s)^{\alpha-1}s^{1-\alpha}(\mathcal{A}x_{k-1}(s)+\vartheta(s))\mathrm{d}s,
 \label{picard}
\end{equation}
for $k\geq1$.

Let us see by induction on $k$ that
\begin{equation}
 x_k(t)=\left(\sum_{j=0}^k t^j \mathcal{A}^j \prod_{i=2}^{j+1} \frac{\Gamma(i-\alpha)}{\Gamma(2-\alpha)\Gamma(i)}\right)x_0+\sum_{j=0}^{k-1} \mathcal{A}^j \cdot {}^L\! J^{(j+1)\circ \alpha}\vartheta(t). \label{forma_a_prov}
\end{equation}

For $k=0$, it is clear because the identity $x_0=x_0$ is obtained. Suppose that the expression is true for $k-1$. We have
\begin{align}
x_k(t)= {} & x_0+{}^L\! J^\alpha (\mathcal{A}x_{k-1}(t)+\vartheta(t)) \label{maris1} \\
= {} & x_0 + \left(\sum_{j=0}^{k-1} \left( {}^L\! J^\alpha t^j\right) \mathcal{A}^{j+1} \prod_{i=2}^{j+1} \frac{\Gamma(i-\alpha)}{\Gamma(2-\alpha)\Gamma(i)}\right)x_0 + \sum_{j=0}^{k-2} \mathcal{A}^{j+1} \cdot {}^L\! J^{(j+2)\circ \alpha}\vartheta(t) + {}^L\! J^\alpha\vartheta(t) \label{maris2} \\
= {} & x_0 + \left[\sum_{j=0}^{k-1} \left( t^{1+j}\frac{\Gamma(2-\alpha+j)}{\Gamma(2-\alpha)\Gamma(2+j)}\right) \mathcal{A}^{j+1} \prod_{i=2}^{j+1} \frac{\Gamma(i-\alpha)}{\Gamma(2-\alpha)\Gamma(i)}\right]x_0 \nonumber \\
{} & + \sum_{j=0}^{k-2} \mathcal{A}^{j+1} \cdot {}^L\! J^{(j+2)\circ \alpha}\vartheta(t) + {}^L\! J^\alpha\vartheta(t) \label{maris3} \\
= {} & x_0 + \left(\sum_{j=0}^{k-1} t^{1+j} \mathcal{A}^{j+1} \prod_{i=2}^{j+2} \frac{\Gamma(i-\alpha)}{\Gamma(2-\alpha)\Gamma(i)}\right)x_0 + \sum_{j=0}^{k-2} \mathcal{A}^{j+1} \cdot {}^L\! J^{(j+2)\circ \alpha}\vartheta(t) + {}^L\! J^\alpha\vartheta(t) \nonumber \\
= {} & \left(\sum_{j=0}^k t^j \mathcal{A}^j \prod_{i=2}^{j+1} \frac{\Gamma(i-\alpha)}{\Gamma(2-\alpha)\Gamma(i)}\right)x_0+\sum_{j=0}^{k-1} \mathcal{A}^j \cdot {}^L\! J^{(j+1)\circ \alpha}\vartheta(t), \nonumber
\end{align}
which is exactly~\eqref{forma_a_prov}. In the first equality~\eqref{maris1}, we use~\eqref{picard}. The second equality~\eqref{maris2} is the induction hypothesis. The third equality~\eqref{maris3} is obtained from Lemma~\ref{lema_beta}.

From the form of $x_k$ in~\eqref{forma_a_prov} and Propositions~\ref{ja_esta} and~\ref{ard}, the convergence of $x_k$ toward  $x$ in~\eqref{imposs} is guaranteed in $\mathcal{C}[0,T]$. We need to check that this $x$ indeed solves~\eqref{deixar}.

Since $x_k\rightarrow x$ in the sense of $\mathcal{C}[0,T]$ as $k\rightarrow\infty$, we obtain that $\mathcal{A}x_{k-1}+\vartheta \rightarrow \mathcal{A}x+\vartheta$ in $\mathcal{C}[0,T]$. By Proposition~\ref{lema_coni}, we know that ${}^L\! J^\alpha\in\mathcal{L}(\mathcal{C}[0,T])$; therefore,
\[ \lim_{k\rightarrow\infty} {}^L\! J^\alpha (\mathcal{A}x_{k-1}+\vartheta)={}^L\! J^\alpha (\mathcal{A}x+\vartheta) \]
in $\mathcal{C}[0,T]$. Thus, taking limits as $k\rightarrow\infty$ in the recurrence's definition~\eqref{picard}, the fixed-point identity~\eqref{deixar} is established, as wanted.

By~\eqref{probl2} in Proposition~\ref{propiidsf}, if $x$ and $\vartheta$ are absolutely continuous on $[0,T]$, then~\eqref{imposs} solves~\eqref{model2} almost everywhere on $[0,T]$. If $x$ and $\vartheta$ are given by power series on $[0,T]$, then~\eqref{imposs} solves~\eqref{model2} for every $t\in [0,T]$.
\end{proof}

\subsection{A Link with Probability Theory} \label{link_prrrp}

For computations and proofs concerning ${}^L\! J^\alpha$, the incorporation of $t^{1-\alpha}$ in the  convolution of~\eqref{convolL} is obviously a handicap. For the Caputo fractional derivative, the fact that ${}^C\! J^\alpha y(t)=\frac{1}{\Gamma(\alpha)}t^{\alpha-1}\ast y(t)$ (see~\eqref{volt_capi}) and the associative property of the convolution permit having the iterations of ${}^C\! J^\alpha$:
\begin{equation}
\begin{split}
 \underbrace{{}^C\! J^\alpha \circ \cdots \circ {}^C\! J^\alpha}_{m\text{ times}} y(t)= {} & \frac{1}{\Gamma(\alpha)^m}(\underbrace{t^{\alpha-1}\ast \cdots \ast t^{\alpha-1}}_{m\text{ times}})\ast y(t) \\
= {} & \frac{1}{\Gamma(\alpha)^m}t^{m\alpha-1}\ast y(t).
\end{split}
 \label{molta_civada}
\end{equation}

Unfortunately, this is not the case for the L-fractional derivative and its iterated integral operator ${}^L\! J^{m\circ \alpha}$, which has an effect on the computation of the solution~\eqref{imposs}.

A probabilistic interpretation~\cite{barros} may help us understand the structure of ${}^L\! J^{m\circ \alpha}$ more. From the definition~\eqref{convolL}, we notice that
\begin{equation} {}^L\! J^\alpha y(t)=t \mathbb{E}[y(t\mathfrak{U})], \label{proba_L} \end{equation}
where $\mathfrak{U}$ is a random variable with distribution $\mathrm{Beta}(2-\alpha,\alpha)$ and $\mathbb{E}$ is the expectation operator. The L-fractional derivative~\eqref{conddd2} is
\begin{equation} {}^L\! D^\alpha y(t)=\mathbb{E}[y'(t\mathfrak{W})], \label{memo_L} \end{equation}
where $\mathfrak{W}$ is a random variable with distribution $\mathrm{Beta}(1,1-\alpha)$. Expression~\eqref{memo_L} emphasizes the memory property and the non-local behavior associated with the fractional derivative. Lemma~\ref{lema_beta} is, in fact, a result of statistical moments of the beta distribution. When $\alpha=1$, we obtain the ordinary operators that depend on $\mathrm{Uniform}(0,1)$ distributions. The iterations of~\eqref{proba_L} are the following: 
\begin{align*}
 {}^L\! J^{2\circ \alpha} y(t)= {} & t \mathbb{E}_{\mathfrak{U}_2}[{}^L\! J^\alpha(t\mathfrak{U}_2)] \\
= {} & t \mathbb{E}_{\mathfrak{U}_2}[ t\mathfrak{U}_2 \mathbb{E}_{\mathfrak{U}_1}[y(t\mathfrak{U}_1\mathfrak{U}_2)]] \\
= {} & t^2 \mathbb{E}_{\mathfrak{U}_2}[ \mathfrak{U}_2 \mathbb{E}_{\mathfrak{U}_1}[y(t\mathfrak{U}_1\mathfrak{U}_2)]],
\end{align*}
\begin{align*}
 {}^L\! J^{3\circ \alpha} y(t)= {} & t\mathbb{E}_{\mathfrak{U}_3}[{}^L\! J^{2\circ \alpha} y(t\mathfrak{U}_3)] \\
= {} & t\mathbb{E}_{\mathfrak{U}_3}[(t\mathfrak{U}_3)^2\mathbb{E}_{\mathfrak{U}_2}[\mathfrak{U}_2 \mathbb{E}_{\mathfrak{U}_1}[y(t\mathfrak{U}_1\mathfrak{U}_2\mathfrak{U}_3]]] \\
= {} & t^3\mathbb{E}_{\mathfrak{U}_3}[\mathfrak{U}_3^2\mathbb{E}_{\mathfrak{U}_2}[\mathfrak{U}_2 \mathbb{E}_{\mathfrak{U}_1}[y(t\mathfrak{U}_1\mathfrak{U}_2\mathfrak{U}_3]]],\;\ldots,
\end{align*}
\begin{equation} {}^L\! J^{m\circ \alpha} y(t)=t^m \mathbb{E}_{\mathfrak{U}_m}[\mathfrak{U}_m^{m-1} \mathbb{E}_{\mathfrak{U}_{m-1}}[\mathfrak{U}_{m-1}^{m-2}\cdots \mathbb{E}_{\mathfrak{U}_2}[\mathfrak{U}_2 \mathbb{E}_{\mathfrak{U}_1}[y(t\mathfrak{U}_1\cdots \mathfrak{U}_m)]]\cdots ]], \label{super_famm}
\end{equation}
where $\mathfrak{U}_1,\mathfrak{U}_2,\ldots$ are $\mathrm{Beta}(2-\alpha,\alpha)$-distributed and independent. Here, $\mathbb{E}_{\mathfrak{U}}[g(\mathfrak{U},\mathfrak{V})]=\mathbb{E}[g(\mathfrak{U},\mathfrak{V})|\mathfrak{V}]$ denotes an expectation of $g(\mathfrak{U},\mathfrak{V})$ with respect to $\mathfrak{U}$, as if we were conditioning on the other random quantity $\mathfrak{V}$. We arrive at the following theorem, which highlights the difficulty when dealing with ${}^L\! J^{m\circ \alpha}$.

\begin{Theorem} \label{th_sol_comp_proba}
The solution of~\eqref{deixar} on $[0,T]$, with initial condition $x(0)=x_0$, is
\[ x(t)=\mathcal{E}_{\alpha}(\mathcal{A}t)x_0+\sum_{j=0}^\infty \mathcal{A}^j t^{j+1} \mathbb{E}_{\mathfrak{U}_{j+1}}[\mathfrak{U}_{j+1}^{j} \mathbb{E}_{\mathfrak{U}_{j}}[\mathfrak{U}_{j}^{j-1}\cdots \mathbb{E}_{\mathfrak{U}_2}[\mathfrak{U}_2 \mathbb{E}_{\mathfrak{U}_1}[\vartheta(t\mathfrak{U}_1\cdots \mathfrak{U}_{j+1})]]\cdots ]], \]
where $\mathfrak{U}_1,\mathfrak{U}_2,\ldots$ are $\mathrm{Beta}(2-\alpha,\alpha)$-distributed and independent. If $x$ and $\vartheta$ are absolutely continuous on $[0,T]$, then $x$ solves~\eqref{model2} almost everywhere on $[0,T]$. If $x$ and $\vartheta$ are given by power series on $[0,T]$, then~\eqref{imposs} solves~\eqref{model2} for every $t\in [0,T]$.
\end{Theorem}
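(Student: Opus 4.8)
The plan is to obtain this theorem as a direct reformulation of Theorem~\ref{th_sol_comp}, replacing each iterated operator ${}^L\! J^{(j+1)\circ\alpha}\vartheta$ in the explicit solution~\eqref{imposs} by its probabilistic representation. The only genuinely new ingredient is the identity~\eqref{super_famm} for ${}^L\! J^{m\circ\alpha}$, so I would begin by establishing the base case~\eqref{proba_L} rigorously and then iterate it, after which the statement follows by substitution.

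First I would verify~\eqref{proba_L}. Starting from the defining convolution~\eqref{convolL} and performing the change of variable $s=tu$, $\mathrm{d}s=t\,\mathrm{d}u$, the integral becomes
\[ {}^L\! J^\alpha y(t)=\frac{t}{\Gamma(\alpha)\Gamma(2-\alpha)}\int_0^1 (1-u)^{\alpha-1}u^{1-\alpha}y(tu)\,\mathrm{d}u. \]
Since $\Gamma(\alpha)\Gamma(2-\alpha)=\Gamma(2-\alpha)\Gamma(\alpha)/\Gamma(2)=B(2-\alpha,\alpha)$ by~\eqref{beta2}, and the weight $u^{1-\alpha}(1-u)^{\alpha-1}$ is precisely the unnormalized $\mathrm{Beta}(2-\alpha,\alpha)$ density, the integral equals $\mathbb{E}[y(t\mathfrak{U})]$ with $\mathfrak{U}\sim\mathrm{Beta}(2-\alpha,\alpha)$, which is~\eqref{proba_L}.

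Next I would derive~\eqref{super_famm} by induction on $m$. Writing ${}^L\! J^{m\circ\alpha}={}^L\! J^\alpha\circ{}^L\! J^{(m-1)\circ\alpha}$ and applying~\eqref{proba_L} to the outer operator with the inner iterate in the role of $y$, the leading factor $t$ combines with the rescaling $t\mapsto t\mathfrak{U}_m$ inside the induction hypothesis; carrying the factor $(t\mathfrak{U}_m)^{m-1}$ through produces the power $\mathfrak{U}_m^{m-1}$ together with one extra $t$, so the total prefactor becomes $t^m$ and the nesting of conditional expectations extends by one level. Independence of the $\mathfrak{U}_i$ is what lets me read each $\mathbb{E}_{\mathfrak{U}_k}$ as an expectation with respect to $\mathfrak{U}_k$ conditioned on the remaining variables, exactly as the nested notation encodes.

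Finally, substituting~\eqref{super_famm} with $m=j+1$ and $y=\vartheta$ into the second summand of~\eqref{imposs} reproduces the stated series term by term; convergence in $\mathcal{C}[0,T]$ is inherited from Proposition~\ref{ja_esta} through Theorem~\ref{th_sol_comp}, so no new convergence estimate is required. The two assertions about solving~\eqref{model2}---almost everywhere under absolute continuity, and everywhere under power-series data---are identical to those already established in Theorem~\ref{th_sol_comp} and transfer verbatim. I expect no real analytic obstacle here: the entire difficulty is the careful bookkeeping of the powers $\mathfrak{U}_k^{k-1}$ and the order of conditioning in the iteration, with all the hard convergence work already supplied by the earlier results.
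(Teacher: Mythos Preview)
Your proposal is correct and follows essentially the same approach as the paper: the paper's own proof consists of the single line ``See~\eqref{super_famm} and the previous development,'' where the preceding development establishes~\eqref{proba_L} and then iterates it to obtain~\eqref{super_famm}, after which substitution into Theorem~\ref{th_sol_comp} gives the result. If anything, you are slightly more explicit than the paper in justifying the change of variable for~\eqref{proba_L} and in framing the passage to~\eqref{super_famm} as a formal induction rather than displaying the cases $m=2,3$ and then stating the general formula.
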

\begin{proof}
See~\eqref{super_famm} and the previous development.
\end{proof}

The appearance of $\vartheta(t\mathfrak{U}_1\cdots \mathfrak{U}_{j+1})$ in Theorem~\ref{th_sol_comp_proba} makes us investigate what happens when $\vartheta$ is given by a power function. Indeed, in that case, the various expectations can  be separated.

\begin{Remark} 
The difference between the explicit form of~\eqref{molta_civada} and~\eqref{super_famm} has an effect on the theory of Taylor series and their residuals as well. In the Caputo case, the mean-value theorem is
\[ x(t)-x(0)=\frac{1}{\Gamma(\alpha)} {}^C\! D^\alpha x(\xi)\cdot t^\alpha, \]
where $\xi\in (0,t)$, $t>0$. For the L-fractional derivative,
\begin{align}
x(t)-x(0)= {} & {}^L\! J^\alpha \circ {}^L\! D^\alpha x(t) \label{probl1_USI} \\
= {} & \frac{1}{\Gamma(\alpha)\Gamma(2-\alpha)}\int_0^t (t-s)^{\alpha-1}s^{1-\alpha}\cdot {}^L\! D^\alpha x(s)\mathrm{d}s \label{probl2_USI} \\
= {} & \frac{{}^L\! D^\alpha x(\xi)}{\Gamma(\alpha)\Gamma(2-\alpha)}\int_0^t (t-s)^{\alpha-1}s^{1-\alpha}\mathrm{d}s \label{probl3_USI} \\
= {} & {}^L\! D^\alpha x(\xi)\cdot t. \label{probl4_USI}
\end{align}

In~\eqref{probl1_USI}, the analog of Barrow's rule~\eqref{probl1} is used. In~\eqref{probl2_USI}, definition~\eqref{convolL} is applied. The mean-value theorem gives~\eqref{probl3_USI}, by the continuity of ${}^L\! D^\alpha x$ when $x$ is smooth. Finally, Lemma~\ref{lema_beta} is utilized in the last equality~\eqref{probl4_USI}. Observe, as a  consequence, that
\[ {}^L\! D^\alpha x(0)=\frac{\mathrm{d}x}{\mathrm{d}t}(0)=x'(0)\in (-\infty,\infty), \]
in contrast to the Caputo derivative (see also the justification~\eqref{tre_acabanti}). Hence, locally, at the beginning of the dynamics around $t\approx0$, the system~\eqref{LEDO} is very similar to the ordinary differential equation analog, and the change with $\alpha$ is smoother than in the Caputo case.

The mean-value theorem may be seen as the residual of the zeroth-order Taylor series. When the order of the Taylor series is increased, the Caputo derivative has the residual
\[ x(t)=\sum_{i=0}^n x_i t^{\alpha i} + \frac{{}^C\! D^{(n+1)\circ \alpha} x(\xi)}{\Gamma((n+1)\alpha+1)}t^{(n+1)\alpha}, \]
where $t>0$ and ${}^C\! D^{(n+1)\circ \alpha}={}^C\! D^\alpha\circ \cdots\circ {}^C\! D^\alpha$ is the iterated derivative. This formula mimics the expression for the ordinary derivative ($\alpha=1$), and it is a consequence of~\eqref{molta_civada}. Unfortunately, for the L-fractional derivative, one cannot expect a similar expression for 
\[ x(t)-\sum_{i=0}^n x_i t^i={}^L\! J^{(n+1)\circ \alpha}\circ {}^L\! D^{(n+1)\circ \alpha} x(t), \]
because ${}^L\! J^{(n+1)\circ \alpha}$ is not given in closed form, as a convolution. See \cite{taylor_generi} (expression~(3.11)) for  details in the context of the Caputo fractional calculus. These observations conclude the remark.
\end{Remark}

We noticed that both operators ${}^L\! J^\alpha$ and ${}^L\! D^\alpha$ have a probabilistic interpretation in terms of the beta distribution. Does the new Mittag--Leffler-type function $\mathcal{E}_\alpha$ enjoy a connection with probability theory? If $a$ is a random variable and 
\vspace{6pt}
\[ \varphi_a(s)=\mathbb{E}[\mathrm{e}^{as}] \]
is its moment-generating function, it is known that~\cite{lege_meuu,lege_meuu2}:
\begin{equation} 
\begin{split}
\exists\,C,n_0>0,\,0\leq p<1: {} & \; \frac{\mathbb{E}[|a|^{n+1}]}{\mathbb{E}[|a|^n]}\leq Cn^p,\;\forall\,n\geq n_0\; \\
{} & \Rightarrow\;
\varphi_a(s)<\infty,\;\forall\,s\in\mathbb{R}\;\Leftrightarrow\; \lim_{n\rightarrow\infty} \frac{\|a\|_n}{n}=0; 
\end{split}
\label{partaill1} 
\end{equation}
\begin{equation} 
\begin{split}
\exists\,C,n_0>0,\,0\leq p\leq 1: {} & \; \frac{\mathbb{E}[|a|^{n+1}]}{\mathbb{E}[|a|^n]}\leq Cn^p,\;\forall\,n\geq n_0\; \\
{} & \Leftrightarrow\;\exists\,\delta>0:\; \varphi_a(s)<\infty,\;\forall\,s\in (-\delta,\delta)\;\Leftrightarrow\; \limsup_{n\rightarrow\infty} \frac{\|a\|_n}{n}<\infty, 
\end{split}
\label{partaill2} 
\end{equation}
where $\|a\|_n=\mathbb{E}[|a|^n]^{1/n}$ is the $n$-th norm. In~\eqref{partaill1}, the converse of the first implication is not true, as the Poisson distribution shows with its moments given by the Bell numbers. Let 
\[ \varphi_a^\alpha(s)=\mathbb{E}[\mathcal{E}_\alpha(as)], \]
$s\in\mathbb{R}$, be the L-fractional moment-generating function of $a$, of order $\alpha\in (0,1]$. This is an extension of the usual moment-generating function, which is retrieved for $\alpha=1$. We obtain a partial version of~\eqref{partaill1} and~\eqref{partaill2} for $\varphi_a^\alpha$, because we need to employ the ratio test of convergence instead of the Cauchy--Hadamard theorem.

\begin{Theorem}
The following implications hold:
\[ \exists\,C,n_0>0,\,0\leq p<1:\; \frac{\mathbb{E}[|a|^{n+1}]}{\mathbb{E}[|a|^n]}\leq Cn^{\alpha p},\;\forall\,n\geq n_0\;\Rightarrow\;
\varphi_a^\alpha(s)<\infty,\;\forall\,s\in\mathbb{R}; \]
\[ \exists\,C,n_0>0,\,0\leq p\leq 1:\; \frac{\mathbb{E}[|a|^{n+1}]}{\mathbb{E}[|a|^n]}\leq Cn^{\alpha p},\;\forall\,n\geq n_0\;\Rightarrow\;\exists\,\delta>0:\; \varphi_a^\alpha(s)<\infty,\;\forall\,s\in (-\delta,\delta). \]
\end{Theorem}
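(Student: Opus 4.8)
The plan is to expand the L-fractional moment-generating function as a power series and reduce the finiteness question to the convergence of a scalar series, to which I apply the ratio test exactly as in the convergence analysis~\eqref{turmell}. Writing $\mathcal{E}_\alpha(s)=\sum_{n=0}^\infty c_n s^n$ with
\[ c_n=\frac{1}{\Gamma(2-\alpha)^n\prod_{j=1}^n \frac{\Gamma(j+1)}{\Gamma(j+1-\alpha)}}, \]
I first observe that every $c_n>0$, since each factor $\Gamma(j+1-\alpha)$ has argument $j+1-\alpha\geq 2-\alpha>0$ for $j\geq 1$. Hence, by Tonelli's theorem for non-negative summands,
\[ \mathbb{E}\bigl[|\mathcal{E}_\alpha(as)|\bigr]\leq \mathbb{E}\Bigl[\sum_{n=0}^\infty c_n |a|^n |s|^n\Bigr]=\sum_{n=0}^\infty c_n |s|^n\,\mathbb{E}[|a|^n], \]
so that $\varphi_a^\alpha(s)=\mathbb{E}[\mathcal{E}_\alpha(as)]$ is finite as soon as the scalar series $\sum_{n} c_n |s|^n\,\mathbb{E}[|a|^n]$ converges; this also legitimates the interchange of expectation and summation a posteriori. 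The moment-ratio hypothesis presupposes that the moments are finite, and then the remaining lower-order moments are finite by Lyapunov's inequality.

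The second step is the ratio test for $a_n:=c_n|s|^n\,\mathbb{E}[|a|^n]$, where I may assume $s\neq 0$ and that $a$ is not almost surely zero (otherwise the claim is immediate). As already computed in~\eqref{turmell},
\[ \frac{c_{n+1}}{c_n}=\frac{1}{\Gamma(2-\alpha)}\,\frac{\Gamma(n+2-\alpha)}{\Gamma(n+2)}, \]
and the Stirling-type relation~\eqref{saps_quina_e} gives $\Gamma(n+2-\alpha)/\Gamma(n+2)\sim n^{-\alpha}$, whence $n^\alpha\, c_{n+1}/c_n\to \Gamma(2-\alpha)^{-1}$. Combining this with the hypothesis $\mathbb{E}[|a|^{n+1}]/\mathbb{E}[|a|^n]\leq C n^{\alpha p}$ for $n\geq n_0$ yields
\[ \frac{a_{n+1}}{a_n}=|s|\,\frac{c_{n+1}}{c_n}\,\frac{\mathbb{E}[|a|^{n+1}]}{\mathbb{E}[|a|^n]}\leq \frac{C|s|}{\Gamma(2-\alpha)}\,n^{\alpha(p-1)}\,\bigl(1+o(1)\bigr). \]

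The final step reads off the two cases from the exponent $\alpha(p-1)$. When $0\leq p<1$ the exponent is strictly negative, so $\limsup_{n\to\infty} a_{n+1}/a_n=0<1$ irrespective of $s$, and the ratio test yields convergence for every $s\in\mathbb{R}$, which is the first implication. When $0\leq p\leq 1$ only the endpoint $p=1$ is new: there $\alpha(p-1)=0$ and $\limsup_{n\to\infty} a_{n+1}/a_n\leq C|s|/\Gamma(2-\alpha)$, which is $<1$ exactly when $|s|<\Gamma(2-\alpha)/C$; hence $\varphi_a^\alpha(s)<\infty$ on $(-\delta,\delta)$ with $\delta=\Gamma(2-\alpha)/C$, giving the second implication.

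I expect no serious obstacle: once reduced to the scalar series, everything follows from the exact coefficient ratio and the asymptotic~\eqref{saps_quina_e} already at hand. The only delicate points are bookkeeping ones---confirming $c_n>0$ to invoke Tonelli and checking that the moment-ratio bound keeps all moments finite---together with the structural reason, flagged just before the statement, why only one-directional implications are obtained rather than the equivalences of~\eqref{partaill1}--\eqref{partaill2}: the ratio test converts the moment-ratio bound directly into convergence, whereas the reverse direction would demand a Cauchy--Hadamard evaluation of $\limsup_{n\to\infty} (c_n\,\mathbb{E}[|a|^n])^{1/n}$, which the products defining $c_n$ do not render transparent.
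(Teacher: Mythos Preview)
Your proposal is correct and follows essentially the same approach as the paper: reduce to the scalar series $\sum_n c_n|s|^n\,\mathbb{E}[|a|^n]$ and apply the ratio test using the coefficient asymptotic from~\eqref{turmell}, distinguishing $p<1$ (ratio $\to 0$) from $p=1$ (ratio $\to C/\Gamma(2-\alpha)$, giving $\delta=\Gamma(2-\alpha)/C$). Your version is in fact slightly more careful than the paper's, since you explicitly justify the interchange of expectation and summation via positivity of $c_n$ and Tonelli, and you note that the moment-ratio hypothesis forces all moments to be finite---points the paper leaves implicit.
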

\begin{proof}
Considering our definition~\eqref{mlf2}, we aim to prove that
\begin{equation} \sum_{n=0}^\infty \frac{\mathbb{E}[|a|^n]|s|^n}{\Gamma(2-\alpha)^n \prod_{j=1}^n \frac{\Gamma(j+1)}{\Gamma(j+1-\alpha)}}<\infty. \label{should_baix1} \end{equation}

According to~\eqref{turmell}, the ratio test gives 
\begin{align}
\frac{\mathbb{E}[|a|^{n+1}]\Gamma(2-\alpha)^n \prod_{j=1}^n \frac{\Gamma(j+1)}{\Gamma(j+1-\alpha)}}{\mathbb{E}[|a|^n]\Gamma(2-\alpha)^{n+1} \prod_{j=1}^{n+1} \frac{\Gamma(j+1)}{\Gamma(j+1-\alpha)}}\leq {} & Cn^{\alpha p}\frac{\Gamma(2-\alpha)^n \prod_{j=1}^n \frac{\Gamma(j+1)}{\Gamma(j+1-\alpha)}}{ \Gamma(2-\alpha)^{n+1} \prod_{j=1}^{n+1} \frac{\Gamma(j+1)}{\Gamma(j+1-\alpha)}} \nonumber \\
= {} & Cn^{\alpha p} \frac{1}{\Gamma(2-\alpha)} \frac{\Gamma(n+2-\alpha)}{\Gamma(n+2)} \nonumber \\
\sim {} & Cn^{\alpha p}\frac{1}{\Gamma(2-\alpha)} \frac{1}{(n+2-\alpha)^\alpha}. \label{should_baix22}
\end{align}

If $p<1$, then~\eqref{should_baix22} tends to $0$ and~\eqref{should_baix1} holds for $s\in\mathbb{R}$. If $p=1$, then~\eqref{should_baix22} converges to 
\[ \frac{C}{\Gamma(2-\alpha)}>0, \]
\vspace{-6pt}
so~\eqref{should_baix1} is satisfied for $s\in (-\delta,\delta)$, where
\[ \delta=\frac{\Gamma(2-\alpha)}{C}. \]
\end{proof}

If $a$ is a bounded random variable, then 
\[ \frac{\mathbb{E}[|a|^{n+1}]}{\mathbb{E}[|a|^n]}\leq C \]
and $\varphi_a^\alpha$ is finite on $\mathbb{R}$. If $a$ is a Gaussian random variable, then
\[ \frac{\mathbb{E}[|a|^{n+1}]}{\mathbb{E}[|a|^n]}\leq Cn^{1/2}, \]
so $\varphi_a^\alpha$ is finite on the real line for $\alpha>1/2$, and it is finite on a neighborhood of zero for $\alpha=1/2$. Since the gamma distribution satisfies 
\[ \frac{\mathbb{E}[|a|^{n+1}]}{\mathbb{E}[|a|^n]}\leq Cn, \]
one cannot work with $\varphi_a^\alpha$ for $\alpha<1$. Finally, the Weibull distribution with shape parameter $\beta$ has the ratio
\[ \frac{\mathbb{E}[|a|^{n+1}]}{\mathbb{E}[|a|^n]}\leq Cn^{1/\beta}, \]
therefore $\varphi_a^\alpha$ is finite on $\mathbb{R}$ for $\alpha>1/\beta$, and it is finite around zero for $\alpha=1/\beta$. For  information on these distributions, see~\cite{lege_meuu}.

It would be of certain relevance to investigate whether we can expect a better characterization for the finiteness of the fractional moment-generating function of random variables. One would likely need to use the Cauchy--Hadamard theorem, rather than the ratio test. Since the new Mittag--Leffler-type function is defined with products of gamma functions, the ratio test is the most straightforward tool to check the convergence of the series.

\subsection{Fractional Powers and Closed-Form Solutions} \label{subs_jardinir}

For an example of closed form of~\eqref{imposs}, let us consider
\begin{equation} \vartheta(t)=(\ell_1 t^{\delta_1},\ldots,\ell_d t^{\delta_d})^\top, \label{vart_spec} \end{equation}
where $\ell_1,\ldots,\ell_d\in\mathbb{C}$ and $\delta_1,\ldots,\delta_d\in (0,\infty)$. Equivalently,
\[ b(t)=(\kappa_1 t^{\mu_1},\ldots,\kappa_d t^{\mu_d})^\top, \]
where $\kappa_1,\ldots,\kappa_d\in\mathbb{C}$ and $\mu_1,\ldots,\mu_d>1-\alpha$ satisfy
\[ \ell_j=\Gamma(2-\alpha)\kappa_j,\quad \delta_j=\mu_j-1+\alpha; \]
see Section~\ref{sec_L} and, specifically, the link conditions~\eqref{relation_CL}. Here, $\top$ denotes the transpose of the vectors, for column form. The powers $\delta_j$ or $\mu_j$ are not necessarily integers; therefore, they are called fractional.

\begin{Lemma} \label{lema_agafa_clau} (Analogous to Corollary~\ref{cor_D_s})
If
\[ \sum_{n=0}^\infty |x_n|t^{n+1+\delta}<\infty \]
\vspace{-6pt}
for all $t\in [0,\epsilon]$, where $\epsilon>0$, $\delta>0$, and $x_n\in\mathbb{C}$, then
\[ {}^L\! D^\alpha \left( \sum_{n=0}^\infty x_n t^{n+1+\delta} \right)=\sum_{n=0}^\infty x_n \cdot {}^L\! D^\alpha (t^{n+1+\delta}) \]
on $[0,\epsilon]$. Furthermore,~\eqref{probl2} holds for all $t\in [0,\epsilon]$ for $\sum_{n=0}^\infty x_n t^{n+1+\delta}$.
\end{Lemma}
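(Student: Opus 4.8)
The plan is to mirror the proof of Corollary~\ref{cor_D_s}, adapting every step to the shifted exponents $n+1+\delta$. First I would record the action of the L-fractional derivative on a single such power: combining~\eqref{cond1} with~\eqref{conddd2} gives
\[
 {}^L\! D^\alpha t^{n+1+\delta}=\Gamma(2-\alpha)\frac{\Gamma(n+2+\delta)}{\Gamma(n+2+\delta-\alpha)}\,t^{n+\delta},
\]
so the claimed right-hand side is the series $g(t)=\sum_{n=0}^\infty \tilde{x}_n t^{n+\delta}$ with $\tilde{x}_n=\Gamma(2-\alpha)\frac{\Gamma(n+2+\delta)}{\Gamma(n+2+\delta-\alpha)}x_n$. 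The first routine step is to check that $\sum_{n=0}^\infty|\tilde{x}_n|t^{n+\delta}$ still converges on the relevant interval: by~\eqref{saps_quina_e} the gamma quotient grows like $n^\alpha$, whose $n$-th root tends to $1$, so multiplying $x_n$ by it leaves the region of convergence unchanged, exactly as in the ``$\lim=1$'' computation of Corollary~\ref{cor_D_s}. Hence $g$ is continuous and the partial sums converge uniformly on compact subintervals of $[0,\epsilon]$.

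Next I would invert $g$ under the integral operator. Applying ${}^L\! J^\alpha$ term by term---legitimate because ${}^L\! J^\alpha\in\mathcal{L}(\mathcal{C}[0,\epsilon])$ by Proposition~\ref{lema_coni} and the partial sums converge uniformly---and using Lemma~\ref{lema_beta} (equivalently Proposition~\ref{lemma_L_pow} with $m=1$) to evaluate ${}^L\! J^\alpha t^{n+\delta}=\frac{\Gamma(n+2+\delta-\alpha)}{\Gamma(2-\alpha)\Gamma(n+2+\delta)}t^{n+1+\delta}$, the gamma factors telescope and yield ${}^L\! J^\alpha g=\sum_{n=0}^\infty x_n t^{n+1+\delta}=x$. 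Note that, in contrast to Corollary~\ref{cor_D_s}, there is no additive constant to subtract, since $\delta>0$ forces $x(0)=0$. Because the differentiated series $\sum x_n(n+1+\delta)t^{n+\delta}$ also converges uniformly on compact subintervals, $x$ is $\mathcal{C}^1[0,\epsilon]$, in particular absolutely continuous, so~\eqref{probl2} of Proposition~\ref{propiidsf} applies: composing with ${}^L\! D^\alpha$ gives ${}^L\! D^\alpha x={}^L\! D^\alpha\circ{}^L\! J^\alpha g=g$ almost everywhere, which is precisely the asserted identity ${}^L\! D^\alpha x=\sum_{n=0}^\infty x_n\,{}^L\! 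D^\alpha t^{n+1+\delta}$ up to the almost-everywhere qualifier.

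The delicate point---and the step where I expect the real work---is upgrading the almost-everywhere equality to a pointwise one on all of $[0,\epsilon]$, the origin included. In Corollary~\ref{cor_D_s} this was free because the solution was $\mathcal{C}^\infty$ and continuity of ${}^L\! D^\alpha x$ could be read off from~\eqref{ja_ve_tr}--\eqref{tre_acabanti}; here, when $\delta\in(0,1)$, the function $x$ is merely $\mathcal{C}^1$ (its formal second derivative behaves like $t^{\delta-1}$ near $0$), so that argument is unavailable. Instead I would argue directly that ${}^C\! D^\alpha x={}^{RL}\! J^{1-\alpha}x'$ is continuous on $[0,\epsilon]$ since $x'\in\mathcal{C}[0,\epsilon]$ (the fractional integral of a continuous function is continuous, as in the well-definedness part of Proposition~\ref{lema_coni}), whence ${}^L\! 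D^\alpha x=\Gamma(2-\alpha)\,t^{\alpha-1}\,{}^C\! D^\alpha x$ is continuous on $(0,\epsilon]$; two continuous functions agreeing almost everywhere agree everywhere, giving ${}^L\! D^\alpha x=g$ on $(0,\epsilon]$. Finally, at $t=0$ both sides vanish---$g(0)=0$ because every exponent $n+\delta$ is positive, and a short estimate of $\int_0^t(t-\tau)^{-\alpha}x'(\tau)\,\mathrm{d}\tau$ shows ${}^L\! D^\alpha x(0)=0$ as well---which closes the gap and simultaneously yields~\eqref{probl2} at every $t\in[0,\epsilon]$.
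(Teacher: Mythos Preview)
Your argument is correct, and the first half---defining $\tilde{x}_n$, checking convergence, inverting under ${}^L\! J^\alpha$ via Proposition~\ref{lema_coni} and Lemma~\ref{lema_beta}, and invoking~\eqref{probl2} to get the identity almost everywhere---matches the paper exactly. The difference lies in the upgrade to a pointwise equality. The paper follows the template of Remark~\ref{cor_D_s_remarK}: it peels off the two low-order terms $x_0 t^{1+\delta}$ and $x_1 t^{2+\delta}$ (whose L-fractional derivatives are explicit multiples of $t^{\delta}$ and $t^{1+\delta}$, hence continuous on $[0,\epsilon]$), so that the remaining tail $\sum_{n\ge 2} x_n t^{n+1+\delta}$ is $\mathcal{C}^3$ and the integration-by-parts identity~\eqref{ja_ve_tr} together with~\eqref{tre_acabanti} give continuity of ${}^L\! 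D^\alpha(\text{tail})$ on all of $[0,\epsilon]$, origin included. Your route bypasses the splitting by observing that $x'\in\mathcal{C}[0,\epsilon]$ already forces ${}^C\! D^\alpha x={}^{RL}\! J^{1-\alpha}x'$ to be continuous, and then handling $t=0$ by a direct estimate $|{}^L\! D^\alpha x(t)|\lesssim t^{\delta}$. This is slightly more economical---it does not need~\eqref{ja_ve_tr} or~\eqref{tre_acabanti}---while the paper's decomposition keeps the argument uniform with the earlier Remark~\ref{cor_D_s_remarK} and makes the behaviour at the origin transparent without a separate estimate.
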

\begin{proof}
The proof is analogous to Corollary~\ref{cor_D_s} and its subsequent Remark~\ref{cor_D_s_remarK}. Conduct the steps in the proof of Corollary~\ref{cor_D_s}, adapted to this case, until~\eqref{estaa3}, which holds almost everywhere. To justify equality everywhere based on continuity at both sides, proceed as in Remark~\ref{cor_D_s_remarK}. Notice that
\[ \sum_{n=0}^\infty x_n t^{n+1+\delta}=x_0 t^{1+\delta} + x_1 t^{2+\delta} + \sum_{n=2}^\infty x_n t^{n+1+\delta}, \]
where
\[ \sum_{n=2}^\infty x_n t^{n+1+\delta}\in\mathcal{C}^3[0,T], \]
so the left-hand side of the corresponding Equation~\eqref{estaa3} is
\[ {}^L\! D^\alpha\left(\sum_{n=0}^\infty x_n t^{n+1+\delta}\right)\in\mathcal{C}[0,T];\]
see~\eqref{ja_ve_tr} and~\eqref{tre_acabanti}.
\end{proof}

\begin{Theorem} \label{socarr}
The solution of~\eqref{model2}, with source term~\eqref{vart_spec} and initial condition $x(0)=x_0$, is
\[ x(t)=\mathcal{E}_{\alpha}(\mathcal{A}t)x_0+\sum_{j=0}^\infty \mathcal{A}^j \nu_j(t), \]
where
\[ \nu_j(t)=\left( \ell_1 \frac{\prod_{i=2}^{j+2} \Gamma(i-\alpha+\delta_1)}{\Gamma(2-\alpha)^{j+1} \prod_{i=2}^{j+2} \Gamma(i+\delta_1)}t^{j+1+\delta_1},\ldots, \ell_d \frac{\prod_{i=2}^{j+2} \Gamma(i-\alpha+\delta_d)}{\Gamma(2-\alpha)^{j+1} \prod_{i=2}^{j+2} \Gamma(i+\delta_d)}t^{j+1+\delta_d}\right)^\top, \]
for every $t$ in $[0,T]$.
\end{Theorem}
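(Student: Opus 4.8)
The plan is to read off the solution from the general result already in hand and then to evaluate the operators explicitly on the fractional-power source. By Theorem~\ref{th_sol_comp}, the solution of the integral equation~\eqref{deixar} is
\[ x(t)=\mathcal{E}_{\alpha}(\mathcal{A}t)x_0+\sum_{j=0}^\infty \mathcal{A}^j \cdot {}^L\! J^{(j+1)\circ \alpha}\vartheta(t), \]
with convergence in $\mathcal{C}[0,T]$ guaranteed by Proposition~\ref{ja_esta}, since $\vartheta\in\mathcal{C}[0,T]$. Thus the whole task reduces to computing ${}^L\! J^{(j+1)\circ \alpha}\vartheta$ for $\vartheta$ given by~\eqref{vart_spec}.

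First I would use that ${}^L\! J^\alpha$, and hence each composition ${}^L\! J^{(j+1)\circ \alpha}$, is linear and acts coordinatewise through the scalar integral of~\eqref{convolL}; therefore it suffices to treat a single component $\ell_k t^{\delta_k}$. The key computation is then a direct invocation of Proposition~\ref{lemma_L_pow} with $m=j+1$ and $\delta=\delta_k$, which is admissible because $\delta_k>0>\alpha-2$; formula~\eqref{Jtd} yields
\[ {}^L\! J^{(j+1)\circ \alpha}(\ell_k t^{\delta_k})=\ell_k\frac{\prod_{i=2}^{j+2} \Gamma(i-\alpha+\delta_k)}{\Gamma(2-\alpha)^{j+1} \prod_{i=2}^{j+2} \Gamma(i+\delta_k)}\,t^{j+1+\delta_k}, \]
which is exactly the $k$-th entry of $\nu_j(t)$. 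Assembling the $d$ coordinates gives ${}^L\! J^{(j+1)\circ \alpha}\vartheta=\nu_j$, and substitution into the displayed solution produces the announced closed form.

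It remains to justify the ``for every $t\in[0,T]$'' assertion, that is, that this $x$ solves~\eqref{model2} pointwise and not merely almost everywhere; this is the only delicate point. The obstruction is that the source~\eqref{vart_spec} is a fractional power, so the ordinary-power-series clause of Theorem~\ref{th_sol_comp}, which rests on Corollary~\ref{cor_D_s}, does not apply verbatim. Instead I would appeal to Lemma~\ref{lema_agafa_clau}: each coordinate of $\sum_{j\geq0}\mathcal{A}^j\nu_j(t)$ is a finite combination of series of the exact shape $\sum_{j\geq0}c_j\,t^{j+1+\delta_m}$ treated there, so ${}^L\! D^\alpha$ may be applied term by term and the fundamental relation~\eqref{probl2} holds at every point of $[0,T]$. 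Applying ${}^L\! D^\alpha$ to the fixed-point identity~\eqref{deixar} and using~\eqref{probl2} in this pointwise form then upgrades ${}^L\! D^\alpha x=\mathcal{A}x+\vartheta$ from an almost-everywhere equality to a genuine pointwise one, completing the argument.
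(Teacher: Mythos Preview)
Your proof is correct and follows essentially the same approach as the paper: invoke Theorem~\ref{th_sol_comp} for the general form~\eqref{imposs}, compute ${}^L\! J^{(j+1)\circ\alpha}$ on each power component via Proposition~\ref{lemma_L_pow}, and then use Lemma~\ref{lema_agafa_clau} to upgrade from an almost-everywhere solution to a pointwise one on $[0,T]$. The paper's own proof is extremely terse (three sentences), so your version is just a fleshed-out rendition of the same argument.
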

\begin{proof}
In the general form~\eqref{imposs} from Theorem~\ref{th_sol_comp}, use Proposition~\ref{lemma_L_pow}. By Lemma~\ref{lema_agafa_clau}, we have a solution for all $t$ in $[0,T]$. (Without Lemma~\ref{lema_agafa_clau}, the conclusion would have been almost everywhere.)
\end{proof}

\begin{Theorem} \label{qpwomchfkdls}
The solution of~\eqref{model2}, with source term~\eqref{vart_spec} and initial condition $x(0)=x_0$,  dimension $d=1$, $\mathcal{A}=a\in\mathbb{C}$, $\ell_1=\ell$ and $\delta_1=\delta$, is
\[ x(t)=\mathcal{E}_{\alpha}(at)x_0+\ell\sum_{j=0}^\infty a^j \frac{\prod_{i=2}^{j+2} \Gamma(i-\alpha+\delta)}{\Gamma(2-\alpha)^{j+1} \prod_{i=2}^{j+2} \Gamma(i+\delta)}t^{j+1+\delta}, \]
for every $t$ in $[0,T]$.
\end{Theorem}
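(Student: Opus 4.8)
The plan is to recognize this statement as the scalar specialization of Theorem~\ref{socarr}: setting $d=1$, $\mathcal{A}=a$, $\ell_1=\ell$, and $\delta_1=\delta$ and reading off the single surviving component of the vector $\nu_j$ already yields the claimed expression. To make the derivation self-contained, I would start from the general explicit solution~\eqref{imposs} of Theorem~\ref{th_sol_comp}, which in the present scalar case reads
\[ x(t)=\mathcal{E}_\alpha(at)x_0+\sum_{j=0}^\infty a^j\,{}^L\! J^{(j+1)\circ\alpha}\vartheta(t), \]
with forcing term $\vartheta(t)=\ell\,t^\delta$ coming from~\eqref{vart_spec}.

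The next step is to evaluate each iterated integral by linearity and Proposition~\ref{lemma_L_pow}, applied with $m=j+1$ and the exponent $\delta>0$ (which trivially satisfies $\delta>\alpha-2$ since $\alpha\in(0,1)$). This gives
\[ {}^L\! J^{(j+1)\circ\alpha}\!\left(\ell\,t^\delta\right)=\ell\,\frac{\prod_{i=2}^{j+2}\Gamma(i-\alpha+\delta)}{\Gamma(2-\alpha)^{j+1}\prod_{i=2}^{j+2}\Gamma(i+\delta)}\,t^{j+1+\delta}, \]
and term-by-term substitution into the series, with $\ell$ pulled outside the sum over $j$, produces exactly the stated closed form.

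Finally, to promote the identity from almost-everywhere to every $t\in[0,T]$, I would appeal to Lemma~\ref{lema_agafa_clau}: the resulting solution is a superposition of fractional-power monomials $t^{j+1+\delta}$, and the absolute convergence hypothesis of that lemma is secured by the norm bound~\eqref{Jtd22} together with the ratio-test computation in Proposition~\ref{ja_esta}, where the matrix norm $|\mathcal{A}|$ is now just $|a|$. I do not anticipate any real obstacle here: all the analytic content---convergence of the operator series, the fundamental theorem of L-fractional calculus, and the power-law integration formula---is already established, so the proof amounts to restricting Theorem~\ref{socarr} to one scalar coordinate and confirming that its hypotheses survive the specialization.
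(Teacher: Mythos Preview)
Your proposal is correct and follows essentially the same route as the paper: the paper's proof is simply ``Apply Theorem~\ref{socarr} in the scalar case,'' and your expanded version---starting from~\eqref{imposs}, invoking Proposition~\ref{lemma_L_pow} for the iterated integrals, and citing Lemma~\ref{lema_agafa_clau} for pointwise validity---is precisely how the proof of Theorem~\ref{socarr} itself proceeds.
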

\begin{proof}
Apply Theorem~\ref{socarr} in the scalar case.
\end{proof}

For another example of a closed form of~\eqref{imposs}, now consider
\begin{equation} \vartheta(t)=\sum_{n=0}^\infty \vartheta_n t^n \label{vart_especc2} \end{equation}
on $[0,T]$, where $\vartheta_n\in\mathbb{C}^d$. That is, $\vartheta$ is real analytic at $t=0$ with values in $\mathbb{C}$. Equivalently,
\vspace{6pt}
\[ b(t)=\frac{t^{1-\alpha}}{\Gamma(2-\alpha)}\sum_{n=0}^\infty \vartheta_n t^{n}, \]
by Section~\ref{sec_L} and~\eqref{relation_CL}. In contrast to the previous case, the powers of $\vartheta$ are integer numbers. For $b$, the powers are fractional.

\begin{Theorem} \label{soccarr2}
The solution of~\eqref{model2}, with source term~\eqref{vart_especc2} and initial condition $x(0)=x_0$, is
\begin{equation} x(t)=\mathcal{E}_{\alpha}(\mathcal{A}t)x_0+\sum_{n=0}^\infty \sum_{k=0}^{n-1} \mathcal{A}^k \frac{\prod_{j=n-k}^n \Gamma(j-\alpha+1)}{\Gamma(2-\alpha)^{k+1}\prod_{j=n-k}^n \Gamma(j+1)}\vartheta_{n-k-1} t^n, \label{rapidet} \end{equation}
for every $t$ in $[0,T]$.
\end{Theorem}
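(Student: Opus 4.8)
The plan is to specialize the general solution~\eqref{imposs} of Theorem~\ref{th_sol_comp} to the analytic source~\eqref{vart_especc2} and then regroup the resulting expansion by powers of $t$. By Theorem~\ref{th_sol_comp}, the particular part of the solution is $\sum_{j=0}^\infty \mathcal{A}^j \cdot {}^L\! J^{(j+1)\circ \alpha}\vartheta(t)$, so the whole task reduces to evaluating the iterated integral operator ${}^L\! J^{(j+1)\circ \alpha}$ on $\vartheta(t)=\sum_{n=0}^\infty \vartheta_n t^n$ and collecting terms of equal degree.

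First I would apply ${}^L\! J^{(j+1)\circ \alpha}$ term by term to the series defining $\vartheta$. This interchange of the operator with the infinite sum is legitimate because ${}^L\! J^{(j+1)\circ \alpha}\in\mathcal{L}(\mathcal{C}[0,T])$ (Propositions~\ref{lema_coni} and~\ref{em_fit_inf}) and $\sum_{n}\vartheta_n t^n$ converges uniformly on $[0,T]$, precisely as in~\eqref{vest1} of Corollary~\ref{cor_D_s}. Proposition~\ref{lemma_L_pow}, used with $m=j+1$ and $\delta=n\geq 0>\alpha-2$, then yields
\[ {}^L\! J^{(j+1)\circ \alpha} t^n=\frac{\prod_{i=2}^{j+2} \Gamma(i-\alpha+n)}{\Gamma(2-\alpha)^{j+1} \prod_{i=2}^{j+2} \Gamma(i+n)}t^{j+1+n}, \]
so the particular part becomes the double series
\[ \sum_{j=0}^\infty \sum_{n=0}^\infty \mathcal{A}^j \vartheta_n \frac{\prod_{i=2}^{j+2} \Gamma(i-\alpha+n)}{\Gamma(2-\alpha)^{j+1} \prod_{i=2}^{j+2} \Gamma(i+n)}t^{j+1+n}. \]

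Next I would reindex by the total degree: set $N=j+1+n$ and keep $k=j$, so that $n=N-k-1$ and $k$ runs from $0$ to $N-1$ (the empty inner sum at $N=0$ recovers $x(0)=x_0$). Under this change the factor $\vartheta_n$ becomes $\vartheta_{N-k-1}$, and the shift $i\mapsto i+n-1$ in the two gamma products gives $\prod_{i=2}^{j+2}\Gamma(i-\alpha+n)=\prod_{j'=N-k}^N \Gamma(j'-\alpha+1)$ and $\prod_{i=2}^{j+2}\Gamma(i+n)=\prod_{j'=N-k}^N \Gamma(j'+1)$, which is exactly the coefficient in~\eqref{rapidet} after renaming $N$ as $n$. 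The interchange of the two summations needed for this regrouping is justified by absolute convergence, which follows from $|\mathcal{A}^j|\leq |\mathcal{A}|^j$ together with the norm bound~\eqref{Jtd22} of Proposition~\ref{em_fit_inf}, i.e. the same estimate that drives Proposition~\ref{ja_esta}.

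Finally, since both the resulting $x$ and the source $\vartheta$ are genuine power series in $t$ on $[0,T]$, the last assertion of Theorem~\ref{th_sol_comp} immediately upgrades the identity from almost everywhere to every $t\in[0,T]$, so no separate analogue of Lemma~\ref{lema_agafa_clau} is required here. The main obstacle is bookkeeping rather than analysis: the reindexing must be licensed by the absolute convergence of the double series (Fubini for sums), and the gamma-product relabelling should be verified at the two endpoints $i=2$ and $i=j+2$ to confirm that the two product representations coincide; everything else is a direct substitution into results already established.
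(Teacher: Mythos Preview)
Your proposal is correct and follows essentially the same route as the paper: apply ${}^L\! J^{(j+1)\circ\alpha}$ term by term to the analytic $\vartheta$ via Proposition~\ref{lema_coni}, evaluate each monomial with Proposition~\ref{lemma_L_pow}, reindex by total degree, and invoke the analytic clause of Theorem~\ref{th_sol_comp} to get the identity for every $t$. If anything, you are slightly more explicit than the paper about the absolute-convergence justification for swapping the double sum.
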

\begin{proof}
For $j\geq0$, we perform the following computations:
\begin{align}
{}^L\! J^{(j+1)\circ\alpha} \vartheta(t)= {} & {}^L\! J^{(j+1)\circ\alpha}\left( \sum_{n=0}^\infty \vartheta_n t^n \right) \nonumber \\
= {} & \sum_{n=0}^\infty \vartheta_n \cdot {}^L\! J^{(j+1)\circ\alpha} t^n \label{mal_1} \\
= {} & \sum_{n=0}^\infty \vartheta_n \frac{\prod_{i=2}^{j+2} \Gamma(i-\alpha+n)}{\Gamma(2-\alpha)^{j+1}\prod_{i=2}^{j+2}\Gamma(i+n)} t^{j+1+n} \label{mal_2} \\
= {} & \sum_{l=j+1}^\infty \vartheta_{l-j-1}\frac{\prod_{i=2}^{j+2} \Gamma(i-\alpha+l-j-1)}{\Gamma(2-\alpha)^{j+1}\prod_{i=2}^{j+2}\Gamma(i+l-j-1)} t^{l} \label{mal_3} \\
= {} & \sum_{l=j+1}^\infty \vartheta_{l-j-1}\frac{\prod_{i=l-j}^{l} \Gamma(i-\alpha+1)}{\Gamma(2-\alpha)^{j+1}\prod_{i=l-j}^{l}\Gamma(i+1)} t^{l}. \label{mal_4}
\end{align}
In the equality from~\eqref{mal_1}, the continuity of ${}^L\! J^{(j+1)\circ\alpha}$ is used; see Proposition~\ref{lema_coni}. In the equality from~\eqref{mal_2}, the formula~\eqref{Jtd} in Proposition~\ref{lemma_L_pow} is employed with $m=j+1$ and $\delta=n>\alpha-2$. In the equality from~\eqref{mal_3}, the change in variable $n+j+1=l$ is applied. The equality from~\eqref{mal_4} follows by expanding the product.

From Theorem~\ref{th_sol_comp},~\eqref{imposs} and~\eqref{mal_4},
\begin{align*}
 x(t)= {} & \mathcal{E}_{\alpha}(\mathcal{A}t)x_0+\sum_{j=0}^\infty \mathcal{A}^j \cdot {}^L\! J^{(j+1)\circ \alpha}\vartheta(t) \\
= {} & \mathcal{E}_{\alpha}(\mathcal{A}t)x_0+\sum_{j=0}^\infty \mathcal{A}^j \sum_{l=j+1}^\infty \vartheta_{l-j-1}\frac{\prod_{i=l-j}^{l} \Gamma(i-\alpha+1)}{\Gamma(2-\alpha)^{j+1}\prod_{i=l-j}^{l}\Gamma(i+1)} t^{l} \\
= {} & \mathcal{E}_{\alpha}(\mathcal{A}t)x_0+ \sum_{l=0}^\infty \sum_{j=0}^{l-1} \mathcal{A}^j\vartheta_{l-j-1}\frac{\prod_{i=l-j}^{l} \Gamma(i-\alpha+1)}{\Gamma(2-\alpha)^{j+1}\prod_{i=l-j}^{l}\Gamma(i+1)} t^{l},
\end{align*}
which corresponds to~\eqref{rapidet}. We finally note that $x$ and the corresponding $\vartheta$ are analytic; hence, we have a solution for every $t\in [0,T]$ and not just almost everywhere;  see Theorem~\ref{th_sol_comp}.
\end{proof}

\begin{Remark} \label{rmk_villi_rel} 
In the Caputo case, we have the fact that~\eqref{forma_pw_compl} solves~\eqref{ode_cap_lineal_compl} almost everywhere on $[0,T]$ if $x$ and $b$ are absolutely continuous on $[0,T]$. If $b$ is given by a fractional power series on $[0,T]$ (in terms of $t^{\alpha n}$), then~\eqref{forma_pw_compl} is the solution of~\eqref{ode_cap_lineal_compl} everywhere on $[0,T]$. Otherwise, we only know that~\eqref{forma_pw_compl} solves the corresponding Volterra integral problem associated with~\eqref{ode_cap_lineal_compl}, $x(t)=x_0+{}^C\! J^\alpha (Ax+b)(t)$, for all $t\in [0,T]$. All these assertions are a consequence of Lemma~\ref{lema_rigor_FC_Cap}. Thus, one should be careful when proposing solutions to fractional differential equations; imprecise statements may give rise to solutions of the integral problem or almost-everywhere solutions. If $b$ only belongs to $\mathcal{C}[0,T]$, then~\eqref{forma_pw_compl} solves the modified Caputo equation ${}^C\! D_{\ast}^\alpha x(t)=Ax(t)+b(t)$ for every $t\in [0,T]$, where ${}^C\! D_{\ast}^\alpha$ is defined by~\eqref{modified_cpitt}; see \cite{weber_pap} (Lemma~4.5). If $x$ in~\eqref{forma_pw_compl} is absolutely continuous on $[0,T]$, then ${}^C\! D_{\ast}^\alpha x(t)={}^C\! D^\alpha x(t)$ almost everywhere \cite{weber_pap} (Lemma~4.12), and~\eqref{ode_cap_lineal_compl} holds almost everywhere on $[0,T]$.
\end{Remark}

\subsection{On Uniqueness}

We notice that all of the obtained solutions are unique. For a general L-fractional equation~\eqref{LEDO}, where the input function $f$ can be nonlinear, uniqueness holds if $f$ is Lipschitz-continuous with respect to the second component on every compact set, independently of the size of the Lipschitz constant. Indeed, if there are two solutions ${}_1 x(t)$ and ${}_2 x(t)$ of~\eqref{LEDO} with ${}_1 x(0)={}_2 x(0)=x_0$, then
\begin{equation}
\begin{split}
 |{}_1 x(t) - {}_2 x(t)|= {} & |{}^L\! J^\alpha f(t,{}_1 x(t)) - {}^L\! J^\alpha f(t,{}_2 x(t))| \\
\leq {} & \frac{1}{\Gamma(\alpha)\Gamma(2-\alpha)}\int_0^t (t-s)^{\alpha-1}s^{1-\alpha}|f(s,{}_1 x(s)) - f(s,{}_2 x(s))|\mathrm{d}s, 
\end{split}
\label{dion} 
\end{equation}
by~\eqref{probl1}. Since ${}_1x([0,T])$ and ${}_2x([0,T])$ are bounded, there exists a constant $M>0$ such that
\[ |f(s,{}_1 x(s))-f(s,{}_2 x(s))|\leq M|{}_1 x(s) - {}_2 x(s)|, \]
for all $s\in [0,T]$. As  a  consequence, from~\eqref{dion},
\[ |{}_1 x(t) - {}_2 x(t)|\leq \frac{T^{1-\alpha} M}{\Gamma(\alpha)\Gamma(2-\alpha)}\int_0^t (t-s)^{\alpha-1}|{}_1 x(s) - {}_2 x(s)|\mathrm{d}s. \]

By Gronwall's inequality with singularity~\cite{gronw2}, one concludes that $|{}_1x(t)-{}_2x(t)|=0$ and ${}_1x={}_2x$ on $[0,T]$, as wanted.

The precise statement that has been proved is the following:
\begin{Proposition} \label{prpo_unic}
Given an L-fractional differential equation~\eqref{LEDO}, if $f$ is Lipschitz-continuous with respect to the second component on every compact set (i.e., for every $R>0$, there exists $M>0$ such that $|f(t,{}_1 x)-f(t,{}_2 x)| \leq M|{}_1 x- {}_2 x|$ for all $|t|\leq R$, $|{}_1 x|\leq R$ and $|{}_2 x|\leq R$), then~\eqref{LEDO} has a unique solution for any initial condition $(0,x_0)$ (in the set of absolutely continuous functions).
\end{Proposition}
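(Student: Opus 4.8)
The plan is to convert the differential problem into an equivalent fixed-point (Volterra integral) problem and then run a Gronwall argument adapted to the singular kernel. First I would observe that, since we work in the class of absolutely continuous functions, the fundamental theorem of L-fractional calculus applies directly: by \eqref{probl1} in Proposition~\ref{propiidsf}, applying ${}^L\! J^\alpha$ to both sides of \eqref{LEDO} shows that any absolutely continuous solution $x$ with $x(0)=x_0$ satisfies the integral identity $x(t)=x_0+{}^L\! J^\alpha f(t,x(t))$ for all $t\in[0,T]$ (and conversely the differential equation is recovered almost everywhere via \eqref{probl2}). Hence it suffices to establish uniqueness for the fixed-point formulation.

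Next I would take two absolutely continuous solutions ${}_1x$ and ${}_2x$ sharing the initial value $x_0$, subtract their integral identities, and use the convolution expression \eqref{convolL} for ${}^L\! J^\alpha$ to arrive at the estimate \eqref{dion}. Because ${}_1x$ and ${}_2x$ are continuous on the compact interval $[0,T]$, their ranges together with $[0,T]$ lie in a fixed compact set, so the local Lipschitz hypothesis furnishes a single constant $M>0$ with $|f(s,{}_1x(s))-f(s,{}_2x(s))|\le M|{}_1x(s)-{}_2x(s)|$ for every $s\in[0,T]$. Bounding the regular factor $s^{1-\alpha}\le T^{1-\alpha}$ then collapses \eqref{dion} into the purely singular form
\[
|{}_1x(t)-{}_2x(t)|\le \frac{T^{1-\alpha}M}{\Gamma(\alpha)\Gamma(2-\alpha)}\int_0^t (t-s)^{\alpha-1}|{}_1x(s)-{}_2x(s)|\,\mathrm{d}s.
\]

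Finally I would invoke the generalized Gronwall inequality for weakly singular kernels~\cite{gronw2}: since the nonnegative continuous function $u(t)=|{}_1x(t)-{}_2x(t)|$ obeys $u(t)\le c\int_0^t (t-s)^{\alpha-1}u(s)\,\mathrm{d}s$ with a vanishing forcing term, the inequality forces $u\equiv 0$, that is, ${}_1x={}_2x$ on $[0,T]$. I expect the only genuine subtlety to be the singularity of the kernel $(t-s)^{\alpha-1}$ at $s=t$: ordinary Gronwall does not apply, and the conclusion rests essentially on the singular version, whose iteration produces a convergent Mittag--Leffler-type majorant and thereby yields the zero conclusion \emph{independently} of the size of $M$. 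The reduction in the first paragraph---ensuring the passage between \eqref{LEDO} and the integral identity is legitimate for merely absolutely continuous functions rather than smooth ones---is the other delicate point, but it is already supplied by Proposition~\ref{propiidsf}.
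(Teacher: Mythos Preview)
Your proposal is correct and follows essentially the same approach as the paper: converting to the Volterra integral form via \eqref{probl1}, extracting a uniform Lipschitz constant from boundedness of the two solutions, bounding $s^{1-\alpha}\le T^{1-\alpha}$ to reduce to a purely singular kernel, and concluding with the singular Gronwall inequality~\cite{gronw2}. The paper additionally offers a second proof that avoids Gronwall entirely by iterating \eqref{macspr} and using the norm bound \eqref{Jtd22ante} on ${}^L\!J^{m\circ\alpha}$, but your argument matches the primary one.
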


We observe that Proposition~\ref{prpo_unic} may be proved without relying on Gronwall's inequality with singularity. This is a nice fact because proofs of uniqueness often use Gronwall's lemmas. If $z={}_1 x- {}_2 x$ on $[0,T]$, then
\begin{equation} |z(t)|\leq M\cdot {}^L\! J^\alpha|z(t)| \label{macspr} \end{equation}
for every $t\in [0,T]$, by~\eqref{dion}. If we iterate~\eqref{macspr} $m$ times,
\begin{equation} |z(t)|\leq M^m \cdot {}^L\! J^{m\circ \alpha}|z(t)|. \label{macspr22} \end{equation}

By Proposition~\ref{em_fit_inf},~\eqref{macspr22} continues with
\begin{equation} |z(t)|\leq M^m \cdot {}^L\! J^{m\circ \alpha}|z(t)|\leq M^m \|z\|_\infty \frac{\prod_{i=2}^{m+1} \Gamma(i-\alpha)}{\Gamma(2-\alpha)^m \prod_{i=2}^{m+1}\Gamma(i)}T^m. \label{macspr33} \end{equation}

As $m\rightarrow\infty$, the right-hand side of the inequality~\eqref{macspr33} tends to $0$, because
\[ \sum_{m=1}^\infty M^m \|z\|_\infty \frac{\prod_{i=2}^{m+1} \Gamma(i-\alpha)}{\Gamma(2-\alpha)^m \prod_{i=2}^{m+1}\Gamma(i)}T^m<\infty \]
by the ratio test (see~\eqref{turmell}, for instance). Hence $z(t)=0$, and we are finished.

In spite of this, I am not aware of a proof that does not draw  on the integral operator ${}^L\! J^\alpha$ (or ${}^C\! J^\alpha$). Let us consider the case of dimension $d=1$. If $z={}_1 x- {}_2 x$ were non-zero at some point on $(0,T]$, then we could define 
\[ t^\ast=\max\{t\in [0,T]:\,z([0,t])=\{0\}\}<T. \]

For some $\delta>0$ such that $z(t)\neq0$ on $(t^\ast,t^\ast+\delta)$, we would have
\[ {}^L\! D^\alpha z(t)=\frac{f(s,{}_1 x(s))-f(s,{}_2 x(s))}{z(t)}z(t)=a(t)z(t) \]
on $(t^\ast,t^\ast+\delta)$. By extending $a$ to $[0,t^\ast]$ with the zero value, the equation 
\[ {}^L\! D^\alpha z(t)=a(t)z(t) \]
would hold on $[0,t^\ast+\delta)$. That is, the initial problem is converted into a linear equation. The function $a$ is bounded by $M$, by the Lipschitz condition of $f$; therefore, it is Lebesgue-integrable. In the case $\alpha=1$, one defines 
\[ \tilde{a}(t)=\int_0^t a(\tau)\mathrm{d}\tau. \]

By using the integrating-factor method, 
\[ \mathrm{e}^{-\tilde{a}(t)}z'(t)=\mathrm{e}^{-\tilde{a}(t)}a(t)z(t), \]
i.e., 
\[ (\mathrm{e}^{-\tilde{a}}z)'(t)=0 \]
almost everywhere. Hence $\mathrm{e}^{-\tilde{a}(t)}z(t)=z(0)=0$ and $z(t)=0$ on $[0,t^\ast+\delta)$. For $\alpha<1$, one cannot use the same reasoning, because the product rule is not satisfied.

\section{Sequential Linear Equations with Constant Coefficients: Context and Solution} \label{sec_sequ}

The aim of this section is to investigate autonomous linear L-fractional differential equations of the sequential type. The term ``sequential'' comes from the fact that higher-order derivatives are defined by composition, in a sequential manner, while maintaining the original index $\alpha$ in $(0,1)$. We define these equations and highlight some of the issues and problems that arise. We then proceed to find solutions, by exploiting the vector-space structure. We first elaborate on the case of sequential order two, which gives the necessary intuition to tackle the general case. The novel Mittag--Leffler-type function plays an essential role and gives rise to a new view of how the exponential function works in the setting of linear ordinary differential equations. Most of the development is concerned with the homogeneous model. Eventually, some forcing terms are possible, by extending the method of undetermined coefficients. Several examples illustrate the theory.

\subsection{Definitions and Problems} \label{subsec_defi_pro}

Considering the composition of operators~\eqref{notation_circ}, and in analogy to ordinary differential equations, a sequential L-fractional differential equation of order $m\geq1$ and dimension $d\geq1$ is
\begin{equation} {}^L\! D^{m\circ \alpha} x(t)=f(t,x(t),{}^L\! D^{\alpha}x(t), {}^L\! D^{2\circ \alpha} x(t),\ldots,{}^L\! D^{(m-1)\circ \alpha}x(t)), \label{seq_probl_1} \end{equation}
where $f:[0,T]\times \Omega\subseteq [0,T]\times \mathbb{R}^{dm}\rightarrow \mathbb{R}^{dm}$, or $f:[0,T]\times \Omega\subseteq [0,T]\times \mathbb{C}^{dm}\rightarrow \mathbb{C}^{dm}$, is a continuous function. The initial data to be met are
\begin{equation} x(0)=x_0,\;\; {}^L\! D^{\alpha}x(0)=x_{0,1},\;\; {}^L\! D^{2\circ \alpha} x(0)=x_{0,2},\;\; \ldots,\;\; {}^L\! D^{(m-1)\circ \alpha}x(0)=x_{0,m-1}, \label{sec_probl_ic} \end{equation}
where $x_0,x_{0,1},x_{0,2},\ldots,x_{0,m-1}\in\mathbb{C}$. Model~\eqref{seq_probl_1} with~\eqref{sec_probl_ic} generalizes, in principle,~\eqref{LEDO}, since $m$ can be greater than $1$. However, as occurs with the ordinary case $\alpha=1$, one may see that~\eqref{seq_probl_1} and~\eqref{LEDO} are equivalent.

\begin{Proposition} \label{propo_canoi}
Equations~\eqref{seq_probl_1} and~\eqref{LEDO} are equivalent.
\end{Proposition}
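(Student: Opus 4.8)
The plan is to reproduce the classical reduction of an $m$-th order equation to a first-order system, the only novelty being the bookkeeping for the composed operators ${}^L\! D^{k\circ \alpha}$ and for the function spaces in which they are defined. First I would introduce the augmented unknown
\[ Y=(y_0,y_1,\ldots,y_{m-1})^\top\in\mathbb{C}^{dm},\qquad y_k={}^L\! D^{k\circ \alpha}x, \]
together with the map $F:[0,T]\times\tilde{\Omega}\subseteq[0,T]\times\mathbb{C}^{dm}\rightarrow\mathbb{C}^{dm}$ given by
\[ F(t,Y)=\bigl(y_1,\,y_2,\,\ldots,\,y_{m-1},\,f(t,y_0,y_1,\ldots,y_{m-1})\bigr)^\top. \]
Since $F$ is continuous whenever $f$ is, the system ${}^L\! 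D^\alpha Y=F(t,Y)$ is precisely an instance of~\eqref{LEDO}, only in dimension $dm$ rather than $d$, with initial state $Y(0)=(x_0,x_{0,1},\ldots,x_{0,m-1})^\top$ read off from~\eqref{sec_probl_ic}.

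For the forward implication I would take a solution $x$ of~\eqref{seq_probl_1}--\eqref{sec_probl_ic} and verify directly that the $Y$ defined above solves ${}^L\! D^\alpha Y=F(t,Y)$. The first $m-1$ component equations ${}^L\! D^\alpha y_{k-1}=y_k$ hold by the very definition~\eqref{notation_circ} of the composed derivative, namely ${}^L\! D^\alpha\circ{}^L\! D^{(k-1)\circ \alpha}={}^L\! D^{k\circ \alpha}$, while the last component equation is~\eqref{seq_probl_1} itself rewritten in terms of the $y_k$; the matching of initial data is immediate.

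For the converse I would start from a solution $Y$ of the first-order system and read the chain of relations ${}^L\! D^\alpha y_{k-1}=y_k$ upward: by induction on $k$ this forces $y_k={}^L\! D^{k\circ \alpha}y_0$ for every $k$, so the final component equation ${}^L\! D^\alpha y_{m-1}=f(t,y_0,\ldots,y_{m-1})$ becomes exactly ${}^L\! D^{m\circ \alpha}y_0=f(t,y_0,{}^L\! D^\alpha y_0,\ldots,{}^L\! D^{(m-1)\circ \alpha}y_0)$; hence $x:=y_0$ solves~\eqref{seq_probl_1} with the prescribed data.

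The step deserving care---and the main obstacle---is not the algebra but the regularity bookkeeping needed for the iterated operators to be well defined at each stage: applying ${}^L\! D^\alpha$ to $y_{k-1}$ presupposes that $y_{k-1}$ lies in a class for which ${}^L\! D^\alpha$ makes sense (absolutely continuous, or smooth and analytic). I would therefore phrase the equivalence within the function class furnished by the fundamental theorem of L-fractional calculus (Proposition~\ref{propiidsf} and Corollary~\ref{cor_D_s}), so that each $y_k={}^L\! D^{k\circ \alpha}x$ is admissible and the compositions in both directions are legitimate; within that class the two formulations carry exactly the same solutions.
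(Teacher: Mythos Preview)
Your proposal is correct and follows essentially the same reduction as the paper: introduce the augmented state $\tilde{x}=(x,{}^L\! D^{\alpha}x,\ldots,{}^L\! D^{(m-1)\circ\alpha}x)$ and observe that it satisfies a first-order system of dimension $dm$. The paper's proof only writes out the forward implication explicitly, so your version---which also spells out the converse by induction and flags the regularity bookkeeping---is in fact more complete.
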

\begin{proof}
If $x$ satisfies~\eqref{seq_probl_1}, then 
\[ \tilde{x}(t)=(x(t),{}^L\! D^{\alpha}x(t), {}^L\! D^{2\circ \alpha} x(t),\ldots,{}^L\! D^{(m-1)\circ \alpha}x(t)) \]
solves
\begin{align*}
 {}^L\! D^{\alpha}\tilde{x}(t)= {} & ({}^L\! D^{\alpha} x(t),{}^L\! D^{2\circ \alpha}x(t), {}^L\! D^{3\circ \alpha} x(t),\ldots,{}^L\! D^{m\circ \alpha}x(t)) \\
= {} & (\tilde{x}_2(t),\tilde{x}_3(t),\ldots,\tilde{x}_m(t),f(t,\tilde{x}(t))) \\
= {} & \tilde{f}(t,\tilde{x}(t)),
\end{align*}
which is a first-order system of dimension $dm$. The initial condition is
\[ \tilde{x}(0)=(x_0,x_{0,1},\ldots,x_{0,m-1}). \]
\end{proof}

Although this proposition reduces~\eqref{seq_probl_1} to~\eqref{LEDO}, the dimension of the associated system~\eqref{LEDO} is greater, of size $dm$. Hence, in some situations, this procedure may not be convenient to derive explicit or closed-form solutions for~\eqref{seq_probl_1}.

A sequential linear L-fractional differential equation of order $m\geq1$ and dimension $d=1$ is
\begin{equation}
 {}^L\! D^{m\circ \alpha} x(t)+a_{m-1} {}^L\! D^{(m-1)\circ \alpha} x(t)+\ldots+a_1 {}^L\! D^{\alpha} x(t)+a_0 x(t)=0. 
 \label{linear_seq_1}
\end{equation}

The coefficients $a_0,\ldots,a_{m-1}$ are scalars in $\mathbb{C}$ and $x:[0,T]\rightarrow\mathbb{C}$. The initial condition to be met is~\eqref{sec_probl_ic}. Note that~\eqref{linear_seq_1} is scalar, homogeneous, and autonomous.

By Proposition~\ref{propo_canoi},~\eqref{linear_seq_1} can be reduced to a linear system of the form~\eqref{adapted_se}, with matrix $\mathcal{A}\in\mathbb{C}^{m\times m}$ and solution~\eqref{is_connn}: 
\begin{equation}
{}^L\! D^{\alpha} \hat{x}=\mathcal{A}\hat{x},\;\;\hat{x}=\begin{pmatrix} x \\ {}^L\! D^{\alpha} x \\ \vdots \\ {}^L\! D^{(m-1)\circ \alpha} x \end{pmatrix},\;\; \mathcal{A}=\begin{pmatrix} 0 & 1 & 0 & \cdots & 0 \\ 0 & 0 & 1 & \cdots & 0 \\ \vdots & \vdots & \vdots & \ddots & \vdots \\ -a_0 & -a_1 & -a_2 & \cdots & -a_{m-1} \end{pmatrix}.
 \label{redu_lin_siss}
\end{equation}

Let $\mathcal{S}$ be the set of solutions of~\eqref{linear_seq_1}, equivalently~\eqref{redu_lin_siss}, without specifying initial conditions. By Theorem~\ref{soccarr2}, it is clear that $\mathcal{S}\subseteq\mathcal{C}^\omega$. In the following proposition, we examine the algebraic structure of $\mathcal{S}$.

\begin{Proposition} \label{vec_sp_SS}
The set $\mathcal{S}$ is a vector space over $\mathbb{C}$, of dimension $m$. 
\end{Proposition}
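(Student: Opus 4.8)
The plan is to exhibit an explicit linear isomorphism between $\mathcal{S}$ and $\mathbb{C}^m$. First I would verify that $\mathcal{S}$ is a vector space. Since each ${}^L\! D^{k\circ \alpha}$ is a composition of the linear operator ${}^L\! D^\alpha$ (linearity being immediate from \eqref{conddd2} and the linearity of the Caputo derivative), the left-hand side of \eqref{linear_seq_1} defines a linear map $L$ on $\mathcal{C}^\omega$, namely $Lx={}^L\! D^{m\circ \alpha}x+a_{m-1}{}^L\! D^{(m-1)\circ \alpha}x+\cdots+a_0 x$; recall that $\mathcal{S}\subseteq\mathcal{C}^\omega$ has been noted just before the statement. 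Then $\mathcal{S}=\ker L$ is a subspace of $\mathcal{C}^\omega$, hence a $\mathbb{C}$-vector space.

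For the dimension, I would introduce the initial-data map
\[
\Psi:\mathcal{S}\longrightarrow\mathbb{C}^m,\qquad \Psi(x)=\bigl(x(0),\,{}^L\! D^\alpha x(0),\,\ldots,\,{}^L\! D^{(m-1)\circ \alpha}x(0)\bigr),
\]
which is well defined because every $x\in\mathcal{S}$ is analytic, so each ${}^L\! D^{k\circ \alpha}x$ is continuous at $0$ (see \eqref{ja_ve_tr} and \eqref{tre_acabanti}), and linear because differentiation and evaluation are linear. The goal is to show that $\Psi$ is a bijection, whence $\dim\mathcal{S}=\dim\mathbb{C}^m=m$.

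Surjectivity follows from existence: given any $(c_0,\ldots,c_{m-1})\in\mathbb{C}^m$, Proposition~\ref{propo_canoi} lets me pass to the companion system \eqref{redu_lin_siss}, and the explicit solution \eqref{is_connn}, justified by Theorem~\ref{th_sol_comp}, provides $\hat{x}(t)=\mathcal{E}_\alpha(\mathcal{A}t)\hat{x}_0$ with $\hat{x}_0=(c_0,\ldots,c_{m-1})^\top$; its first component $x$ solves \eqref{linear_seq_1} for every $t$ (since $\mathcal{E}_\alpha(\mathcal{A}t)\hat{x}_0$ is analytic), so $x\in\mathcal{S}$, and because the components of $\hat{x}$ are exactly $x,{}^L\! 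D^\alpha x,\ldots,{}^L\! D^{(m-1)\circ \alpha}x$ one reads off $\Psi(x)=(c_0,\ldots,c_{m-1})$. Injectivity follows from uniqueness: if $\Psi(x)=0$, then the associated $\hat{x}$ solves the linear system \eqref{redu_lin_siss} with zero initial vector, and since $\hat{f}(t,\hat{x})=\mathcal{A}\hat{x}$ is globally Lipschitz, Proposition~\ref{prpo_unic} applied in dimension $m$ forces $\hat{x}\equiv 0$, hence $x\equiv 0$; thus $\ker\Psi=\{0\}$.

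The main obstacle, and the place where the earlier machinery is genuinely used, is guaranteeing that $\Psi$ is simultaneously surjective and injective \emph{on the nose}: this rests on the equivalence of \eqref{linear_seq_1} with the first-order companion system (Proposition~\ref{propo_canoi}) together with the existence of the analytic solution \eqref{is_connn} and the uniqueness result of Proposition~\ref{prpo_unic}. I must take care that \eqref{is_connn} is valid for \emph{every} $t\in[0,T]$, not merely almost everywhere, which is precisely why the analyticity clause of Theorem~\ref{th_sol_comp} is invoked, and that the successive values ${}^L\! D^{k\circ \alpha}x(0)$ recover the initial data correctly through the structure of $\hat{x}$ in \eqref{redu_lin_siss}.
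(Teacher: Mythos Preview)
Your proposal is correct and follows essentially the same approach as the paper: both identify $\mathcal{S}$ as the kernel of the linear operator defining the equation, introduce the initial-data map to $\mathbb{C}^m$ (the paper calls it $\Xi$), and deduce injectivity from Proposition~\ref{prpo_unic} and surjectivity from Proposition~\ref{propo_canoi} together with Theorem~\ref{th_sol_comp}. Your added care about the ``every $t$'' versus ``almost everywhere'' distinction via the analyticity clause of Theorem~\ref{th_sol_comp} is a point the paper leaves more implicit.
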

\begin{proof}
Since ${}^L\! D^{\alpha}$ is a linear operator, it is obvious that $\mathcal{S}$ satisfies the properties of a vector space. Another proof relies on the fact that $\mathcal{S}=\mathrm{Ker}\Lambda$, where
\begin{equation} \Lambda:\,\mathcal{C}^\omega\rightarrow\mathcal{C}^\omega, \label{xe_lambda} \end{equation}
\begin{equation} \Lambda={}^L\! D^{m\circ \alpha} +a_{m-1} {}^L\! D^{(m-1)\circ \alpha} +\ldots+a_1 {}^L\! D^{\alpha} +a_0. \label{xe_lambda2} \end{equation}

The fact that $\dim\mathcal{S}\leq m$ follows from the injectivity of the linear map
\begin{equation} \Xi:\,\mathcal{S}\rightarrow\mathbb{C}^m, \label{xe_xi} \end{equation}
\begin{equation} \Xi x=(x(0),{}^L\! D^{\alpha} x(0),{}^L\! D^{2\circ \alpha}x(0),\ldots,{}^L\! D^{(m-1)\circ \alpha}x(0)). \label{xe_xi2} \end{equation}
Indeed, since~\eqref{linear_seq_1} can be expressed as a first-order system~\eqref{redu_lin_siss} by Proposition~\ref{propo_canoi}, and uniqueness for these models is known---see Proposition~\ref{prpo_unic}---we then have that initial conditions in $\mathbb{C}^m$ give rise to at most one solution in $\mathcal{S}$.

The surjectivity of \eqref{xe_xi}--\eqref{xe_xi2}, which is equivalent to the existence of a solution for any initial-value problem~\eqref{linear_seq_1} with~\eqref{sec_probl_ic}, is true by Proposition~\ref{propo_canoi} (transformation to first-order system~\eqref{redu_lin_siss}) and Theorem~\ref{th_sol_comp}. It implies that $\dim\mathcal{S}\geq m$. This completes the proof.
\end{proof}

Consider the polynomial
\begin{equation} p(\lambda)=\lambda^m+a_{m-1}\lambda^{m-1}+\ldots+a_1\lambda+a_0, \label{assochi_poly} \end{equation}
which is the characteristic polynomial of the matrix $\mathcal{A}\in\mathbb{C}^{m\times m}$ associated with the corresponding first-order linear system~\eqref{redu_lin_siss}. By the fundamental theorem of algebra,
\[ p(\lambda)=(\lambda-\lambda_1)^{n_1}\cdots (\lambda-\lambda_r)^{n_r}, \]
where $\lambda_1,\ldots,\lambda_r\in\mathbb{C}$ are the distinct roots of $p$ (eigenvalues of $\mathcal{A}$) with multiplicities $n_1,\ldots,n_r\geq1$, respectively, and $n_1+\ldots+n_r=m$. 

To solve~\eqref{linear_seq_1}, we express~\eqref{linear_seq_1} as a sequential model of scalar first-order linear equations of the form~\eqref{simplest_L}. We rely on solving scalar linear problems iteratively, entirely based on power-series calculations, with no need for matrix variables. It is important to emphasize that, since we deal with power series, computations hold for every $t$, and not only almost everywhere; see Theorem~\ref{th_sol_comp} and Theorem~\ref{soccarr2}. Equation~\eqref{linear_seq_1} is rewritten as
\begin{equation}
 ({}^L\! D^{\alpha}-\lambda_1)^{n_1}\circ \cdots \circ ({}^L\! D^{\alpha}-\lambda_r)^{n_r}x=0.
 \label{desc_D_lini}
\end{equation}

All those factors commute. To find $x$, one needs to consider, in order, 
\begin{equation} ({}^L\! D^{\alpha}-\lambda_1)y_{1,\lambda_1}=0,\;\; ({}^L\! D^{\alpha}-\lambda_1)y_{2,\lambda_1}=y_{1,\lambda_1},\;\; \ldots,\;\; ({}^L\! D^{\alpha}-\lambda_1)y_{n_1,\lambda_1}=y_{n_1-1,\lambda_1}, \label{claud1} \end{equation}
\begin{equation} ({}^L\! D^{\alpha}-\lambda_2)y_{1,\lambda_2}=y_{n_1-1,\lambda_1},\;\; ({}^L\! D^{\alpha}-\lambda_2)y_{2,\lambda_2}=y_{1,\lambda_2},\;\; \ldots,\;\; ({}^L\! D^{\alpha}-\lambda_2)y_{n_2,\lambda_2}=y_{n_2-1,\lambda_2}, \label{claud2} \end{equation}
\begin{equation} \ldots, \label{claud3} \end{equation}
\begin{equation} ({}^L\! D^{\alpha}-\lambda_r)y_{1,\lambda_r}=y_{n_{r-1}-1,\lambda_{r-1}},\;\; ({}^L\! D^{\alpha}-\lambda_r)y_{2,\lambda_r}=y_{1,\lambda_r},\;\; \ldots,\;\; ({}^L\! D^{\alpha}-\lambda_r)y_{n_r,\lambda_r}=y_{n_r-1,\lambda_r}, \label{claud4} \end{equation}
\begin{equation} x=y_{n_r,\lambda_r}. \label{claud5} \end{equation}

In the following parts, we investigate how to solve the sequential problems \eqref{claud1}--\eqref{claud5}. We first address the order $m=2$ and then generalize to any $m$. Besides the former case being easier, it permits establishing the methodology and deducing how the general solution should be.

Actually, we will only need the upper bound $\dim\mathcal{S}\leq m$, which holds by uniqueness (Proposition~\ref{prpo_unic}). Note that $\dim\mathcal{S}\leq m$ can be justified alternatively, based on the sequential decomposition~\eqref{desc_D_lini}, by
\vspace{-6pt}
\begin{equation} \dim\mathcal{S}=\dim\mathrm{Ker}\Lambda\leq \sum_{j=1}^m n_j\cdot \dim \underbrace{\mathrm{Ker} ({}^L\! D^{\alpha}-\lambda_j)}_{\langle \mathcal{E}_\alpha (\lambda_j t)\rangle}=\sum_{j=1}^m n_j\cdot 1=m, \label{seqke} \end{equation}
where $\Lambda$ was defined in~\eqref{xe_lambda},~\eqref{xe_lambda2}. The first inequality in~\eqref{seqke} comes from the fact that, if $g_1,g_2:V\rightarrow V$ are two linear maps, then $\mathrm{Ker}(g_1\circ g_2)=g_2^{-1}(\mathrm{Ker} g_1)$ and $G:\mathrm{Ker}(g_1\circ g_2)\rightarrow\mathrm{Ker} g_1$, $v\mapsto g_2(v)$, is well defined and linear with $\mathrm{Ker}G=\mathrm{Ker}g_2$, so that $\mathrm{Ker}(g_1\circ g_2)/\mathrm{Ker}g_2\cong \mathrm{Im}G\leq \mathrm{Ker} g_1$ by the first isomorphism theorem and $\dim \mathrm{Ker}(g_1\circ g_2)\leq \dim \mathrm{Ker} g_1 + \dim \mathrm{Ker} g_2$ holds.

\subsection{Solution for Sequential Order Two}

When $m=2$, Equation~\eqref{linear_seq_1} is
\begin{equation}
 ({}^L\! D^{2\circ \alpha} + a_1 {}^L\! D^{\alpha} + a_0)x=0,
 \label{linear_seq_m2} 
\end{equation}
where $a_0,a_1\in\mathbb{C}$. The associated polynomial $p$ in~\eqref{assochi_poly}, 
\[ p(\lambda)=\lambda^2+a_1\lambda+a_0, \]
has two roots, $\lambda_1$ and $\lambda_2$. Let $\mathcal{S}$ be the complete set of solutions of~\eqref{linear_seq_m2}. From~\eqref{sec_probl_ic}, and consider the initial states
\begin{equation} x(0)=x_0,\;\; {}^L\! D^{\alpha}x(0)=x_{0,1}. \label{ic_dim_m2} \end{equation}

With this notation, the following theorem gives the solution to~\eqref{linear_seq_m2}.

\begin{Theorem} \label{th_increible}
If the roots of the associated polynomial, $\lambda_1$ and $\lambda_2$, are distinct in $\mathbb{R}$ or in $\mathbb{C}$, then
\begin{equation} x(t)=\frac{x_{0,1}-\lambda_2 x_0}{\lambda_1-\lambda_2}\mathcal{E}_\alpha(\lambda_1 t)+\frac{\lambda_1 x_0-x_{0,1}}{\lambda_1-\lambda_2}\mathcal{E}_\alpha(\lambda_2 t) \label{masa_1} \end{equation}
is the solution of~\eqref{linear_seq_m2} with initial conditions~\eqref{ic_dim_m2}, on $[0,\infty)$. The set
\begin{equation} \{ \mathcal{E}_\alpha(\lambda_1 t), \mathcal{E}_\alpha(\lambda_2 t) \} \label{baexi1} \end{equation}
is a basis of $\mathcal{S}$.

On the contrary, if $\lambda_1=\lambda_2=\lambda$ (in $\mathbb{R}$), then
\begin{equation} x(t)=x_0\mathcal{E}_\alpha(\lambda t) + (x_{0,1}-\lambda x_0)t \mathcal{E}_\alpha'(\lambda t) \label{masa_2} \end{equation}
is the solution of~\eqref{linear_seq_m2} with initial conditions~\eqref{ic_dim_m2}, on $[0,\infty)$. The set
\begin{equation} \{ \mathcal{E}_\alpha(\lambda t), t\mathcal{E}_\alpha'(\lambda t) \} \label{baexi2} \end{equation}
is a basis of $\mathcal{S}$.

Here, $\mathcal{E}_\alpha$ is the new Mittag--Leffler-type function~\eqref{mlf2} and $\mathcal{E}_\alpha'$ denotes its ordinary derivative.
\end{Theorem}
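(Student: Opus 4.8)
The plan is to lean on the vector-space structure from Proposition~\ref{vec_sp_SS}: $\dim\mathcal{S}=2$ and the evaluation map $\Xi$ of \eqref{xe_xi}--\eqref{xe_xi2} is injective. Consequently, to prove each basis claim it suffices to exhibit two elements of $\mathcal{S}$ whose images under $\Xi$ are linearly independent in $\mathbb{C}^2$; independence of the functions then follows from injectivity, and they span $\mathcal{S}$ because $\dim\mathcal{S}=2$. I will also use that $\mathcal{E}_\alpha(\lambda t)$ generates $\mathrm{Ker}({}^L\! D^\alpha-\lambda)$, which is the content of \eqref{seqke} and rests on Theorem~\ref{th_sol_comp} and the uniqueness Proposition~\ref{prpo_unic}. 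For the evaluation, note $\mathcal{E}_\alpha(0)=1$ and, since $\mathcal{E}_\alpha(\lambda t)$ solves ${}^L\! D^\alpha y=\lambda y$ pointwise, ${}^L\! D^\alpha\mathcal{E}_\alpha(\lambda t)|_{t=0}=\lambda$, so $\Xi\mathcal{E}_\alpha(\lambda t)=(1,\lambda)$.

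For distinct roots, each $\mathcal{E}_\alpha(\lambda_i t)$ lies in $\mathcal{S}$: it is killed by $({}^L\! D^\alpha-\lambda_i)$, and since the two factors in \eqref{desc_D_lini} commute, the remaining factor then annihilates it. The images $(1,\lambda_1)$ and $(1,\lambda_2)$ are independent exactly when $\lambda_1\neq\lambda_2$, establishing the basis \eqref{baexi1}; imposing $c_1+c_2=x_0$ and $c_1\lambda_1+c_2\lambda_2=x_{0,1}$ on $x=c_1\mathcal{E}_\alpha(\lambda_1 t)+c_2\mathcal{E}_\alpha(\lambda_2 t)$ gives the coefficients in \eqref{masa_1}.

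The repeated-root case carries the real difficulty: showing $t\mathcal{E}_\alpha'(\lambda t)\in\mathcal{S}$. The key identity I aim to prove is
\[
({}^L\! D^\alpha-\lambda)\bigl(t\,\mathcal{E}_\alpha'(\lambda t)\bigr)=\mathcal{E}_\alpha(\lambda t),
\]
for then a second application of $({}^L\! D^\alpha-\lambda)$ returns $0$, so $t\mathcal{E}_\alpha'(\lambda t)$ solves $({}^L\! D^\alpha-\lambda)^2x=0$, i.e.\ \eqref{linear_seq_m2}. I will verify this termwise, which is legitimate by Corollary~\ref{cor_D_s}. Writing $\mathcal{E}_\alpha(s)=\sum_{n\geq0}s^n/c_n$ with $c_n=\Gamma(2-\alpha)^n\prod_{j=1}^n \Gamma(j+1)/\Gamma(j+1-\alpha)$, one has $t\mathcal{E}_\alpha'(\lambda t)=\sum_{n\geq1}n\lambda^{n-1}t^n/c_n$; applying ${}^L\! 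D^\alpha t^n=(c_n/c_{n-1})t^{n-1}$ and the one-step recursion $c_n/c_{n-1}=\Gamma(2-\alpha)\Gamma(n+1)/\Gamma(n+1-\alpha)$ collapses the differentiated series to $\sum_{k\geq0}(k+1)\lambda^k t^k/c_k$, which is exactly $\mathcal{E}_\alpha(\lambda t)+\lambda t\mathcal{E}_\alpha'(\lambda t)$. The main obstacle is precisely this cancellation: the $c_n$ do not telescope as the factorials do for the classical exponential, and it is the recursion for $c_n/c_{n-1}$---matching the coefficient that ${}^L\! D^\alpha$ produces on $t^n$---that makes the series close up. (A parameter-differentiation shortcut, differentiating ${}^L\! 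D^\alpha\mathcal{E}_\alpha(\mu t)=\mu\mathcal{E}_\alpha(\mu t)$ in $\mu$, reaches the same identity but requires justifying the interchange of $\partial_\mu$ and ${}^L\! D^\alpha$, so I prefer the direct computation.)

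To close the repeated case, the identity gives $t\mathcal{E}_\alpha'(\lambda t)|_{t=0}=0$ and ${}^L\! D^\alpha(t\mathcal{E}_\alpha'(\lambda t))|_{t=0}=\mathcal{E}_\alpha(0)=1$, hence $\Xi\bigl(t\mathcal{E}_\alpha'(\lambda t)\bigr)=(0,1)$, which is independent of $(1,\lambda)$; this yields the basis \eqref{baexi2}. Solving $c_1=x_0$ and $\lambda c_1+c_2=x_{0,1}$ for $x=c_1\mathcal{E}_\alpha(\lambda t)+c_2 t\mathcal{E}_\alpha'(\lambda t)$ delivers \eqref{masa_2}. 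Finally, since $\mathcal{E}_\alpha$ and $\mathcal{E}_\alpha'$ are entire and $T>0$ was arbitrary, all identities extend to $[0,\infty)$.
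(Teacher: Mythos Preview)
Your proof is correct, and the termwise verification of $({}^L\! D^\alpha-\lambda)\bigl(t\,\mathcal{E}_\alpha'(\lambda t)\bigr)=\mathcal{E}_\alpha(\lambda t)$ via the recursion $c_n/c_{n-1}=\Gamma(2-\alpha)\Gamma(n+1)/\Gamma(n+1-\alpha)$ is exactly the right mechanism. However, your route differs from the paper's. The paper does not \emph{verify} candidate basis elements; it \emph{constructs} the solution by carrying out the sequential chain \eqref{claud1}--\eqref{claud5}: first $y=({}^L\! D^\alpha-\lambda_2)x$ is found to be $c_1\mathcal{E}_\alpha(\lambda_1 t)$, and then $({}^L\! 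D^\alpha-\lambda_2)x=c_1\mathcal{E}_\alpha(\lambda_1 t)$ is solved using the explicit power-series formula of Theorem~\ref{soccarr2}, after which the inner sum $\sum_{k=0}^{n-1}\lambda_1^{n-k-1}\lambda_2^k$ is evaluated---as $(\lambda_1^n-\lambda_2^n)/(\lambda_1-\lambda_2)$ or $n\lambda^{n-1}$---to obtain \eqref{masa_1} and \eqref{masa_2} directly. Your approach is more economical here because it bypasses Theorem~\ref{soccarr2} and leans on Proposition~\ref{vec_sp_SS} and the evaluation map $\Xi$ to close the argument; the paper's approach, in exchange, illustrates the general solving-chain machinery that it later scales up to arbitrary sequential order in Theorem~\ref{th_super_increib}, where your identity reappears as the case $k=1$ of the inductive formula \eqref{auri_crit}.
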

\begin{proof} 
Consider the roots $\lambda_1$ and $\lambda_2$, irrespective of whether these are different or not. Problem~\eqref{linear_seq_m2} decomposes as
\[ ({}^L\! D^{\alpha}-\lambda_1)\circ ({}^L\! D^{\alpha}-\lambda_2)x=0; \]
see~\eqref{desc_D_lini} with $m=n_1+n_2=2$, $n_1,n_2\in\{1,2\}$. 

First, we solve 
\[ ({}^L\! D^{\alpha}-\lambda_1)y=0, \]
which gives
\begin{equation} y=\mathcal{E}_{\alpha}(\lambda_1 t)c_1, \label{rosi1} \end{equation}
where $c_1\in\mathbb{C}$ and $t\in [0,\infty)$. See, for example, the general result of Theorem~\ref{th_sol_comp}. Since
\begin{equation} ({}^L\! D^{\alpha}-\lambda_2)x=y, \label{rosi2} \end{equation}
the constant $c_1$ is
\begin{equation} c_1=y(0)={}^L\! D^{\alpha} x(0)-\lambda_2 x(0)=x_{0,1}-\lambda_2 x_0. \label{rosi3}\end{equation}

Second, from~\eqref{rosi1} and~\eqref{rosi2}, we solve
\begin{equation} ({}^L\! D^{\alpha}-\lambda_2)x(t)=\vartheta(t)=\mathcal{E}_{\alpha}(\lambda_1 t)c_1. \label{rosit5} \end{equation}

We need to use Theorem~\ref{soccarr2}, which deals with power-series source terms. In this case,
\[ \vartheta_n=c_1 \lambda_1^n \frac{\prod_{j=1}^n \Gamma(j+1-\alpha)}{\Gamma(2-\alpha)^n \prod_{j=1}^n \Gamma(j+1)}, \]
considering the expansion's coefficients of the new Mittag--Leffler function~\eqref{mlf2}. Therefore, by Theorem~\ref{soccarr2}, the solution of~\eqref{rosit5} is
\begin{align}
x(t)= {} & \mathcal{E}_{\alpha}(\lambda_2 t)x_0 + \sum_{n=0}^\infty \sum_{k=0}^{n-1} \lambda_2^k \frac{\prod_{j=n-k}^n \Gamma(j-\alpha+1)}{\Gamma(2-\alpha)^{k+1} \prod_{j=n-k}^n \Gamma(j+1)}c_1 \lambda_1^{n-k-1} \nonumber \\
{} & \quad \times \frac{\prod_{j=1}^{n-k-1} \Gamma(j+1-\alpha)}{\Gamma(2-\alpha)^{n-k-1}\prod_{j=1}^{n-k-1}\Gamma(j+1)}t^n \nonumber \\
= {} & \mathcal{E}_{\alpha}(\lambda_2 t)x_0 + (x_{0,1}-\lambda_2 x_0)\sum_{n=0}^\infty \frac{\prod_{j=1}^n \Gamma(j+1-\alpha)}{\Gamma(2-\alpha)^n \prod_{j=1}^n \Gamma(j+1)}t^n \sum_{k=0}^{n-1} \lambda_1^{n-k-1}\lambda_2^k \label{esta_prop}
\end{align}
where the constant~\eqref{rosi3} is substituted into~\eqref{esta_prop}. To deal with the sum
\[ \sum_{k=0}^{n-1} \lambda_1^{n-k-1}\lambda_2^k, \]
we distinguish between $\lambda_1\neq\lambda_2$ and $\lambda_1=\lambda_2=\lambda$. In the former case,
\[ \sum_{k=0}^{n-1} \lambda_1^{n-k-1}\lambda_2^k=\frac{\lambda_1^n-\lambda_2^n}{\lambda_1-\lambda_2} \]
and, from~\eqref{esta_prop},
\begin{align*}
x(t)= {} & \mathcal{E}_\alpha(\lambda_2 t)x_0 + \frac{x_{0,1}-\lambda_2 x_0}{\lambda_1-\lambda_2}\left[ \mathcal{E}_\alpha(\lambda_1 t)-\mathcal{E}_\alpha(\lambda_2 t) \right] \\
= {} & \frac{x_{0,1}-\lambda_2 x_0}{\lambda_1-\lambda_2}\mathcal{E}_\alpha(\lambda_1 t)+\frac{\lambda_1 x_0-x_{0,1}}{\lambda_1-\lambda_2}\mathcal{E}_\alpha(\lambda_2 t),
\end{align*}
which is~\eqref{masa_1}. In the latter case,
\[ \sum_{k=0}^{n-1} \lambda_1^{n-k-1}\lambda_2^k=n\lambda^{n-1}, \]
and~\eqref{masa_2} is obtained.

We finally need to justify that~\eqref{baexi1} and~\eqref{baexi2} are bases of $\mathcal{S}$, when $\lambda_1\neq\lambda_2$ and $\lambda_1=\lambda_2=\lambda$, respectively. Since they generate $\mathcal{S}$, we need to prove linear independence. (Analogously, since $\dim\mathcal{S}\leq 2$ by Proposition~\ref{vec_sp_SS} or~\eqref{seqke}, the linear independence of the two elements suffices.)

For~\eqref{baexi1}, consider a linear combination
\[ \beta_1 \mathcal{E}_\alpha(\lambda_1 t)+\beta_2 \mathcal{E}_\alpha(\lambda_2 t)=0 \]
for all $t\in\mathbb{R}$, where $\beta_1,\beta_2\in\mathbb{C}$. Then,
\[ \beta_1 {}^L\! D^{\alpha} \mathcal{E}_\alpha(\lambda_1 t)+\beta_2 {}^L\! D^{\alpha} \mathcal{E}_\alpha(\lambda_2 t)=0. \]

Since
\[ \det\begin{pmatrix} \mathcal{E}_\alpha(\lambda_1 t) & \mathcal{E}_\alpha(\lambda_2 t) \\ {}^L\! D^{\alpha} \mathcal{E}_\alpha(\lambda_1 t) & {}^L\! D^{\alpha} \mathcal{E}_\alpha(\lambda_2 t) \end{pmatrix}= \det\begin{pmatrix} \mathcal{E}_\alpha(\lambda_1 t) & \mathcal{E}_\alpha(\lambda_2 t) \\ \lambda_1 \mathcal{E}_\alpha(\lambda_1 t) & \lambda_2 \mathcal{E}_\alpha(\lambda_2 t) \end{pmatrix}=(\lambda_2-\lambda_1) \mathcal{E}_\alpha(\lambda_1 t) \mathcal{E}_\alpha(\lambda_2 t) \]
gives $\lambda_2-\lambda_1\neq 0$ at $t=0$, we conclude that $\beta_1=\beta_2=0$ and that linear independence of~\eqref{baexi1} holds.

Analogously, for~\eqref{baexi2}, consider a linear combination
\[ \beta_1 \mathcal{E}_\alpha(\lambda t)+\beta_2 t \mathcal{E}_\alpha'(\lambda t)=0 \]
for all $t\in\mathbb{R}$, where $\beta_1,\beta_2\in\mathbb{C}$. Then,
\[ \beta_1 {}^L\! D^{\alpha} \mathcal{E}_\alpha(\lambda t)+\beta_2 {}^L\! D^{\alpha} (t\mathcal{E}_\alpha'(\lambda t))=0. \]

Simple computations yield
\begin{equation} {}^L\! D^{\alpha} (t\mathcal{E}_\alpha'(\lambda t))=\sum_{n=1}^\infty \frac{n\lambda^{n-1}}{\Gamma(2-\alpha)^n \prod_{j=1}^n \frac{\Gamma(j+1)}{\Gamma(j+1-\alpha)}} \frac{\Gamma(n+1)\Gamma(2-\alpha)}{\Gamma(n+1-\alpha)}t^{n-1}. 
 \label{qpeowidjsn}
\end{equation}

Since
\[ \det\left.\begin{pmatrix} \mathcal{E}_\alpha(\lambda t) & t\mathcal{E}_\alpha'(\lambda t) \\ {}^L\! D^{\alpha} \mathcal{E}_\alpha(\lambda t) & {}^L\! D^{\alpha} (t\mathcal{E}_\alpha'(\lambda t)) \end{pmatrix}\right|_{t=0}= \det\begin{pmatrix} 1 & 0 \\ \lambda & 1 \end{pmatrix}=1\neq 0, \]
it follows $\beta_1=\beta_2=0$ and the linear independence of~\eqref{baexi2}.
\end{proof}

Later, in Section~\ref{sec_sequ_AA}, we will address~\eqref{linear_seq_m2} when the coefficients are analytic functions. The solution will be a power series, with coefficients defined recursively.

\subsection{Solution for Arbitrary Sequential Order and Method of Undetermined Coefficients}

Consider the general problem~\eqref{linear_seq_1}. The associated polynomial~\eqref{assochi_poly} has distinct roots $\lambda_1,\ldots,\lambda_r\in\mathbb{C}$, with multiplicities $n_1,\ldots,n_r\geq 1$, $m=n_1+\ldots+n_r$. Let the complete set of solutions be $\mathcal{S}$. Initial conditions are denoted by~\eqref{sec_probl_ic}.

Theorem~\ref{th_increible} provides the intuition to establish the following general result. Later, we will give several remarks, examples, and an immediate corollary on the method of undetermined coefficients (i.e., the resolution of~\eqref{linear_seq_1} when it is perturbed by a certain source term).

\begin{Theorem} \label{th_super_increib}
For each eigenvalue $\lambda_l$ with multiplicity $n_l$, $l=1,\ldots,r$, consider
\begin{equation} \mathcal{B}_{\lambda_l,n_l}=\{ \mathcal{E}_\alpha(\lambda_l t), t \mathcal{E}_\alpha'(\lambda_l t),t^2 \mathcal{E}_\alpha''(\lambda_l t),\ldots,t^{n_l-1}\mathcal{E}_\alpha^{(n_l-1)}(\lambda_l t)\}, \label{basiss_subs} \end{equation}
where $\mathcal{E}_\alpha$ is the new Mittag--Leffler-type function~\eqref{mlf2} and $\mathcal{E}_\alpha^{(k)}$ denotes its ordinary $k$-th derivative (for $k\in\{1,2,3\}$, we use primes). Let
\begin{equation} \mathcal{B}=\bigcup_{l=1}^r \mathcal{B}_{\lambda_l,n_l}. \label{basiss_subs_22} \end{equation}
Then, $\mathcal{B}$ is a basis for $\mathcal{S}$.
\end{Theorem}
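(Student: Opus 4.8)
The plan is to reduce the statement to three facts: (i) every element of $\mathcal{B}$ lies in $\mathcal{S}$; (ii) $\mathcal{B}$ is linearly independent; and (iii) $|\mathcal{B}|=\sum_{l=1}^r n_l=m$. Since the upper bound $\dim\mathcal{S}\leq m$ is already available (Proposition~\ref{vec_sp_SS}, or the direct estimate~\eqref{seqke}), (i)--(iii) together force $\mathcal{B}$ to be a basis of $\mathcal{S}$.

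The cornerstone is a ladder identity for ${}^L\! D^\alpha$ acting on the proposed generators. Writing $g_{\lambda,k}(t)=t^k\mathcal{E}_\alpha^{(k)}(\lambda t)$, I claim that
\[
({}^L\! D^\alpha-\lambda)\,g_{\lambda,k}=k\,g_{\lambda,k-1},\qquad k\geq 0,
\]
with the convention that the right-hand side vanishes for $k=0$, recovering $({}^L\! D^\alpha-\lambda)\mathcal{E}_\alpha(\lambda t)=0$ (the scalar homogeneous case of Theorem~\ref{th_sol_comp}). To prove this I would expand $g_{\lambda,k}$ as the entire power series $\sum_{n\geq k} c_n\frac{n!}{(n-k)!}\lambda^{n-k}t^n$, where $c_n$ denotes the coefficient of $s^n$ in $\mathcal{E}_\alpha$ from~\eqref{mlf2}, differentiate term by term (legitimate by Corollary~\ref{cor_D_s}, as $\mathcal{E}_\alpha$ is entire and the series converges on all of $[0,T]$), use ${}^L\! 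D^\alpha t^n=\Gamma(2-\alpha)\frac{\Gamma(n+1)}{\Gamma(n+1-\alpha)}t^{n-1}$ (from~\eqref{cond1} and~\eqref{conddd2}) together with the coefficient recursion $\Gamma(2-\alpha)\frac{n!}{\Gamma(n+1-\alpha)}c_n=c_{n-1}$ implied by~\eqref{xn1_LL}, subtract $\lambda g_{\lambda,k}$, and match the coefficient of $c_m t^m$, which telescopes to $k\,\lambda^{m+1-k}\frac{m!}{(m+1-k)!}$ --- precisely the series of $k\,g_{\lambda,k-1}$. Iterating yields $({}^L\! D^\alpha-\lambda)^{j}g_{\lambda,k}=\frac{k!}{(k-j)!}g_{\lambda,k-j}$ for $j\leq k$, and in particular $({}^L\! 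D^\alpha-\lambda)^{k+1}g_{\lambda,k}=0$. I expect this to be the main obstacle, since the product-of-gamma-quotients structure of $c_n$ must collapse correctly; everything else is bookkeeping around it. Membership $\mathcal{B}\subseteq\mathcal{S}$ then follows at once: for $g_{\lambda_l,k}\in\mathcal{B}_{\lambda_l,n_l}$ one has $k\leq n_l-1$, hence $({}^L\! D^\alpha-\lambda_l)^{n_l}g_{\lambda_l,k}=0$; as $\Lambda$ of~\eqref{xe_lambda2} factors as $\prod_j({}^L\! D^\alpha-\lambda_j)^{n_j}$ with commuting factors (see~\eqref{desc_D_lini}), the factor $({}^L\! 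D^\alpha-\lambda_l)^{n_l}$ can be moved to the right to annihilate $g_{\lambda_l,k}$, giving $\Lambda g_{\lambda_l,k}=0$.

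For linear independence I would argue in two stages. Within a single block, the functions $g_{\lambda_l,0},\dots,g_{\lambda_l,n_l-1}$ have pairwise distinct vanishing orders at $t=0$: since $\mathcal{E}_\alpha^{(k)}(0)=k!\,c_k$ with $c_k\neq 0$, the function $g_{\lambda_l,k}$ vanishes to exact order $k$, so no nontrivial combination can vanish identically. For the direct-sum statement across distinct eigenvalues, suppose $\sum_l v_l\equiv 0$ with $v_l$ in the span $V_l$ of $\mathcal{B}_{\lambda_l,n_l}$. Each factor $({}^L\! D^\alpha-\lambda_{l'})$ with $l'\neq l$ maps $V_l$ into itself and, in the basis $g_{\lambda_l,0},\dots,g_{\lambda_l,n_l-1}$, is triangular with constant diagonal $\lambda_l-\lambda_{l'}\neq 0$, because the ladder identity gives $({}^L\! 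D^\alpha-\lambda_{l'})g_{\lambda_l,k}=k\,g_{\lambda_l,k-1}+(\lambda_l-\lambda_{l'})g_{\lambda_l,k}$; hence it is invertible on $V_l$. Meanwhile $({}^L\! D^\alpha-\lambda_l)^{n_l}$ annihilates $V_l$. Applying $Q_{l_0}=\prod_{l\neq l_0}({}^L\! D^\alpha-\lambda_l)^{n_l}$ to the relation kills every $v_l$ with $l\neq l_0$ and leaves $Q_{l_0}v_{l_0}=0$; but $Q_{l_0}$ restricted to $V_{l_0}$ is a product of invertible operators, hence invertible, forcing $v_{l_0}=0$. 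Running over $l_0$ gives $v_l=0$ for all $l$, and combined with the within-block independence this shows $\mathcal{B}$ is linearly independent. Since $|\mathcal{B}|=m=\dim\mathcal{S}$, we conclude that $\mathcal{B}$ is a basis, which completes the proof.
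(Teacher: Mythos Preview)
Your proof is correct and follows the same three-step skeleton as the paper's (membership $\mathcal{B}\subseteq\mathcal{S}$, linear independence, dimension count), but the key technical device and the independence arguments are executed differently and more economically. The paper establishes, by a lengthy induction on $q$, a closed formula for $({}^L\! D^\alpha-\lambda_l)^q(t^k\mathcal{E}_\alpha^{(k)}(\lambda_l t))$ for all $0\leq q\leq k$ (equation~\eqref{fiminik2}), whereas your single-step ladder identity $({}^L\! D^\alpha-\lambda)g_{\lambda,k}=k\,g_{\lambda,k-1}$ is precisely the $q=1$ case of that formula and yields the annihilation $({}^L\! D^\alpha-\lambda)^{k+1}g_{\lambda,k}=0$ by straightforward iteration, avoiding the heavier inductive bookkeeping of~\eqref{justifc1}--\eqref{justifc11}. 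For within-block independence the paper evaluates the L-fractional Wronskian matrix~\eqref{matriu_wronskian} at $t=0$; your argument via distinct vanishing orders at the origin is equivalent but shorter. For cross-block independence the paper appeals to ``standard theory'' of generalized eigenfunctions at distinct eigenvalues; your explicit argument---using the ladder identity to show that $({}^L\! D^\alpha-\lambda_{l'})$ acts on $V_l$ as a triangular operator with nonzero diagonal $\lambda_l-\lambda_{l'}$---is exactly that standard theory made concrete, and is fully self-contained. What the paper's longer computation gains is the intermediate identity~\eqref{auri_crit}, $({}^L\! D^\alpha-\lambda_l)^k(t^k\mathcal{E}_\alpha^{(k)}(\lambda_l t))=k!\,\mathcal{E}_\alpha(\lambda_l t)$, which it reuses later (e.g.\ in the example following Corollary~\ref{coro_espre}); your ladder identity produces this too upon iteration, so nothing is lost.
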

\begin{proof}
Fix $1\leq l\leq r$. Successive differentiation for~\eqref{mlf2} gives
\begin{equation} t^k \mathcal{E}_\alpha^{(k)}(\lambda_l t)=\sum_{n=k}^\infty n(n-1)\cdots (n-k+1)\frac{t^n \lambda_l^{n-k}}{\Gamma(2-\alpha)^n \prod_{j=1}^n \frac{\Gamma(j+1)}{\Gamma(j+1-\alpha)}}. \label{fiminik1} \end{equation}
Let us prove by induction on $0\leq q\leq k$ that
\begin{equation} ({}^L\! D^{\alpha} -\lambda_l)^q (t^k \mathcal{E}_\alpha^{(k)}(\lambda_l t))= t^{k-q}\left( \prod_{i=0}^{q-1} (k-i)\right)\sum_{n=k}^\infty \left( \prod_{i=q}^{k-1} (n-i)\right) \frac{t^{n-k}\lambda_l^{n-k}}{\Gamma(2-\alpha)^{n-q}\prod_{j=1}^{n-q} \frac{\Gamma(j+1)}{\Gamma(j+1-\alpha)}}. \label{fiminik2} \end{equation}

For $q=0$,~\eqref{fiminik2} corresponds to~\eqref{fiminik1}. Suppose the expression is true for $q-1$ (induction hypothesis), and we prove it for $q$. With detailed steps, we have:
\begin{align}
{} & ({}^L\! D^{\alpha} -\lambda_l)^q (t^k \mathcal{E}_\alpha^{(k)}(\lambda_l t))= ({}^L\! D^{\alpha} -\lambda_l)\circ ({}^L\! D^{\alpha} -\lambda_l)^{q-1}(t^k \mathcal{E}_\alpha^{(k)}(\lambda_l t)) \nonumber \\
= {} & ({}^L\! D^{\alpha} -\lambda_l)\left( t^{k-q+1}\left( \prod_{i=0}^{q-2} (k-i)\right)\sum_{n=k}^\infty \left( \prod_{i=q-1}^{k-1} (n-i)\right) \frac{t^{n-k}\lambda_l^{n-k}}{\Gamma(2-\alpha)^{n-q+1}\prod_{j=1}^{n-q+1} \frac{\Gamma(j+1)}{\Gamma(j+1-\alpha)}}\right) \label{justifc1} \\
= {} & \left( \prod_{i=0}^{q-2} (k-i)\right)\sum_{n=k}^\infty \left( \prod_{i=q-1}^{k-1} (n-i)\right) \frac{\lambda_l^{n-k}}{\Gamma(2-\alpha)^{n-q+1}\prod_{j=1}^{n-q+1} \frac{\Gamma(j+1)}{\Gamma(j+1-\alpha)}}({}^L\! D^{\alpha} -\lambda_l)(t^{n-q+1}) \label{justifc2} \\
= {} & \left( \prod_{i=0}^{q-2} (k-i)\right)\sum_{n=k}^\infty \left( \prod_{i=q-1}^{k-1} (n-i)\right) \frac{\lambda_l^{n-k}}{\Gamma(2-\alpha)^{n-q+1}\prod_{j=1}^{n-q+1} \frac{\Gamma(j+1)}{\Gamma(j+1-\alpha)}} \nonumber \\
{} & \quad \times \left[ \frac{\Gamma(n-q+2)\Gamma(2-\alpha)}{\Gamma(n-q+2-\alpha)} t^{n-q} - \lambda_l t^{n-q+1}\right] \label{justifc3} \\
= {} & \left( \prod_{i=0}^{q-2} (k-i)\right)\sum_{n=k}^\infty \left( \prod_{i=q-1}^{k-1} (n-i)\right) \frac{\lambda_l^{n-k}t^{n-q}}{\Gamma(2-\alpha)^{n-q}\prod_{j=1}^{n-q} \frac{\Gamma(j+1)}{\Gamma(j+1-\alpha)}} \label{justifc4} \\
{} & - \left( \prod_{i=0}^{q-2} (k-i)\right)\sum_{n=k}^\infty \left( \prod_{i=q-1}^{k-1} (n-i)\right) \frac{\lambda_l^{n-k+1}t^{n-q+1}}{\Gamma(2-\alpha)^{n-q+1}\prod_{j=1}^{n-q+1} \frac{\Gamma(j+1)}{\Gamma(j+1-\alpha)}} \label{justifc5} \\
= {} & \left( \prod_{i=0}^{q-2} (k-i)\right)\sum_{n=k}^\infty \left( \prod_{i=q-1}^{k-1} (n-i)\right) \frac{\lambda_l^{n-k}t^{n-q}}{\Gamma(2-\alpha)^{n-q}\prod_{j=1}^{n-q} \frac{\Gamma(j+1)}{\Gamma(j+1-\alpha)}} \label{justifc55} \\
{} & - \left( \prod_{i=0}^{q-2} (k-i)\right)\sum_{n=k+1}^\infty \left( \prod_{i=q-1}^{k-1} (n-1-i)\right) \frac{\lambda_l^{n-k}t^{n-q}}{\Gamma(2-\alpha)^{n-q}\prod_{j=1}^{n-q} \frac{\Gamma(j+1)}{\Gamma(j+1-\alpha)}} \label{justifc6} \\
= {} & \left( \prod_{i=0}^{q-2} (k-i)\right)\sum_{n=k+1}^\infty \left[ \left(\prod_{i=q-1}^{k-1} (n-i)\right)-\left(\prod_{i=q-1}^{k-1}(n-1-i)\right)\right] \frac{\lambda_l^{n-k}t^{n-q}}{\Gamma(2-\alpha)^{n-q}\prod_{j=1}^{n-q} \frac{\Gamma(j+1)}{\Gamma(j+1-\alpha)}} \label{justifc7} \\
 {} & + \left( \prod_{i=0}^{q-2} (k-i)\right) \left( \prod_{i=q-1}^{k-1} (k-i)\right) \frac{\lambda_l^{k-k}t^{k-q}}{\Gamma(2-\alpha)^{k-q}\prod_{j=1}^{k-q} \frac{\Gamma(j+1)}{\Gamma(j+1-\alpha)}} \label{justifc8} \\
= {} & \left( \prod_{i=0}^{q-1} (k-i)\right)\sum_{n=k+1}^\infty \left( \prod_{i=q}^{k-1} (n-i)\right) \frac{t^{n-q}\lambda_l^{n-k}}{\Gamma(2-\alpha)^{n-q}\prod_{j=1}^{n-q} \frac{\Gamma(j+1)}{\Gamma(j+1-\alpha)}} \label{justifc9} \\
{} & + \left( \prod_{i=0}^{q-1} (k-i)\right)(k-q)!\frac{\lambda_l^{k-k}t^{k-q}}{\Gamma(2-\alpha)^{k-q}\prod_{j=1}^{k-q} \frac{\Gamma(j+1)}{\Gamma(j+1-\alpha)}} \label{justifc10} \\
= {} & t^{k-q}\left( \prod_{i=0}^{q-1} (k-i)\right)\sum_{n=k}^\infty \left( \prod_{i=q}^{k-1} (n-i)\right) \frac{t^{n-k}\lambda_l^{n-k}}{\Gamma(2-\alpha)^{n-q}\prod_{j=1}^{n-q} \frac{\Gamma(j+1)}{\Gamma(j+1-\alpha)}}. \label{justifc11}
\end{align}

Equality~\eqref{justifc1} comes from the induction hypothesis. Equality~\eqref{justifc2} is merely the linearity of ${}^L\! D^{\alpha} -\lambda_l$. In~\eqref{justifc3}, the fractional derivative of the power is computed.  In \eqref{justifc4}--\eqref{justifc5}, we just expand the previous expression. For expression~\eqref{justifc6}, we just change the variable $n$ in the sum, while~\eqref{justifc55} is unchanged. In~\eqref{justifc7}, we join the two sums~\eqref{justifc55} and~\eqref{justifc6} from $n=k+1$, leaving the $k$-th term of~\eqref{justifc55} at~\eqref{justifc8}. For~\eqref{justifc9}, we apply the equality
\[ \left( \prod_{i=0}^{q-2} (k-i)\right)\left[\left(\prod_{i=q-1}^{k-1} (n-i)\right)-\left(\prod_{i=q-1}^{k-1}(n-1-i)\right)\right]=\left( \prod_{i=0}^{q-1} (k-i)\right)\left( \prod_{i=q}^{k-1} (n-i)\right). \]

Expression~\eqref{justifc10} comes from
\[ \left( \prod_{i=0}^{q-2} (k-i)\right) \left( \prod_{i=q-1}^{k-1} (k-i)\right) = \left( \prod_{i=0}^{q-1} (k-i)\right)(k-q)!. \]

Finally, for~\eqref{justifc11}, we merge terms to derive~\eqref{fiminik2}.

Considering~\eqref{fiminik2}, for $q=k$, we obtain
\begin{equation} ({}^L\! D^{\alpha} -\lambda_l)^k (t^k \mathcal{E}_\alpha^{(k)}(\lambda_l t))= k!\sum_{n=k}^\infty \frac{t^{n-k}\lambda_l^{n-k}}{\Gamma(2-\alpha)^{n-k}\prod_{j=1}^{n-k} \frac{\Gamma(j+1)}{\Gamma(j+1-\alpha)}}=k!\mathcal{E}_\alpha(\lambda_l t). \label{auri_crit} \end{equation}

Therefore,
\[
 ({}^L\! D^{\alpha} -\lambda_l)^{k+1} (t^k \mathcal{E}_\alpha^{(k)}(\lambda_l t))=k! ({}^L\! D^{\alpha} -\lambda_l)(\mathcal{E}_\alpha(\lambda_l t))=0. \]
 
Thus,
\begin{equation} ({}^L\! D^{\alpha} -\lambda_l)^{n_l}(t^k \mathcal{E}_\alpha^{(k)}(\lambda_l t))=0 \label{panxinini} \end{equation}
for all $k=0,\ldots,n_l-1$, so the operator $\Lambda$ from \eqref{xe_lambda}--\eqref{xe_lambda2} vanishes at $t^k \mathcal{E}_\alpha^{(k)}(\lambda_l t)$. This result proves that 
\[ \mathcal{B}_{\lambda_l,n_l}\subseteq\mathcal{S} \]
and, in general, 
\begin{equation} \mathcal{B}\subseteq\mathcal{S}. \label{fins_aciii} \end{equation}

Since $\mathcal{B}$ consists of $n_1+\ldots+n_r=m$ elements, and $\dim\mathcal{S}\leq m$ by  Proposition~\ref{vec_sp_SS} or~\eqref{seqke}, it suffices to prove that the functions in $\mathcal{B}$ are linearly independent. 

First, we prove that the functions in each $\mathcal{B}_{\lambda_l,n_l}$ are linearly independent. Consider a linear combination
\[ \beta_0\mathcal{E}_\alpha(\lambda_l t) + \beta_1 t \mathcal{E}_\alpha'(\lambda_l t) + \beta_2 t^2 \mathcal{E}_\alpha''(\lambda_l t) +\ldots+ \beta_{n_l-1}t^{n_l-1}\mathcal{E}_\alpha^{(n_l-1)}(\lambda_l t)=0, \]
for all $t\in\mathbb{R}$, where $\beta_0,\beta_1,\ldots,\beta_{n_l-1}\in\mathbb{C}$. Then,
\begin{align*}
 \beta_0 {}^L\! D^{q\circ \alpha} \mathcal{E}_\alpha(\lambda_l t) + {} & \beta_1 {}^L\! D^{q\circ \alpha} (t \mathcal{E}_\alpha'(\lambda_l t)) \\
 + {} & \beta_2 {}^L\! D^{q\circ \alpha}(t^2 \mathcal{E}_\alpha''(\lambda_l t)) +\ldots+ \beta_{n_l-1}{}^L\! D^{q\circ \alpha}(t^{n_l-1}\mathcal{E}_\alpha^{(n_l-1)}(\lambda_l t))=0, 
\end{align*}
for $1\leq q\leq n_l-1$. Now, by~\eqref{fiminik1},
\begin{align*}
 {}^L\! D^{q\circ \alpha} {} & (t^k \mathcal{E}_\alpha^{(k)}(\lambda_l t))= \sum_{n=k}^\infty n(n-1)\cdots (n-k+1)\frac{{}^L\! D^{q\circ \alpha}(t^n) \lambda_l^{n-k}}{\Gamma(2-\alpha)^n \prod_{j=1}^n \frac{\Gamma(j+1)}{\Gamma(j+1-\alpha)}} \\
= {} & \sum_{n=k}^\infty n(n-1)\cdots (n-k+1)\frac{t^{n-q}\lambda_l^{n-k}}{\Gamma(2-\alpha)^n \prod_{j=1}^n \frac{\Gamma(j+1)}{\Gamma(j+1-\alpha)}}\Gamma(2-\alpha)^q \prod_{i=n-q+2}^{n+1}\frac{ \Gamma(i)}{\Gamma(i-\alpha)},
\end{align*}
which vanishes at $t=0$ for $q+1\leq k\leq n_l-1$ and takes the value $1$ at $t=0$ for $k=q$. Consequently, the matrix
\begin{equation} \begin{pmatrix} \mathcal{E}_\alpha(\lambda_l t) & t \mathcal{E}_\alpha'(\lambda_l t) & \cdots & t^{n_l-1}\mathcal{E}_\alpha^{(n_l-1)}(\lambda_l t) \\
{}^L\! D^{\alpha} \mathcal{E}_\alpha(\lambda_l t) & {}^L\! D^{\alpha}(t \mathcal{E}_\alpha'(\lambda_l t)) & \cdots & {}^L\! D^{\alpha}(t^{n_l-1}\mathcal{E}_\alpha^{(n_l-1)}(\lambda_l t)) \\
{}^L\! D^{2\circ \alpha} \mathcal{E}_\alpha(\lambda_l t) & {}^L\! D^{2\circ \alpha}(t \mathcal{E}_\alpha'(\lambda_l t)) & \cdots & {}^L\! D^{2\circ \alpha}(t^{n_l-1}\mathcal{E}_\alpha^{(n_l-1)}(\lambda_l t)) \\
\vdots & \vdots & \ddots & \vdots \\
{}^L\! D^{(n_l-1)\circ \alpha} \mathcal{E}_\alpha(\lambda_l t) & {}^L\! D^{(n_l-1)\circ\alpha}(t \mathcal{E}_\alpha'(\lambda_l t)) & \cdots & {}^L\! D^{(n_l-1)\circ \alpha}(t^{n_l-1}\mathcal{E}_\alpha^{(n_l-1)}(\lambda_l t)) \end{pmatrix} \label{matriu_wronskian} \end{equation}
is upper-triangular at $t=0$, with non-zero elements at the diagonal. Its determinant is then non-zero, so necessarily $\beta_0=\beta_1=\ldots=\beta_{n_l-1}=0$.

To conclude the proof, consider a linear combination of elements in the complete set $\mathcal{B}$:
\begin{equation} \sum_{k=0}^{n_1-1} \beta_{k,1} t^k \mathcal{E}_\alpha^{(k)}(\lambda_1 t) + \ldots + \sum_{k=0}^{n_r-1} \beta_{k,r} t^k \mathcal{E}_\alpha^{(k)}(\lambda_r t)=0, \label{besonets} \end{equation}
where $\beta_{k,l}\in\mathbb{C}$. Suppose that there are coefficients $\beta_{k_{i},l_i}\neq0$ for $i=1,\ldots,I\leq r$, $I\geq2$, $1\leq l_i\leq r$ distinct, $0\leq k_{i}\leq n_{l_i}-1$, that is, at least one non-zero coefficient for each root $\lambda_{l_i}$. Then,~\eqref{besonets} can be rewritten as
\[ \sum_{i=1}^I \beta_{k_i,l_i} e_i=0, \]
where $e_i\in \langle \mathcal{B}_{\lambda_{l_i},n_{l_i}}\rangle$. We know that $e_i$ is a generalized eigenfunction of ${}^L\! D^{\alpha}$ associated with $\lambda_{l_i}$; see~\eqref{panxinini}. Since $\lambda_1,\ldots,\lambda_r$ are distinct, $\{e_1,\ldots,e_I\}$ are linearly independent by standard theory. Therefore, the assumed condition with $I\geq2$ is impossible. Then, $I=1$ and the linear combination~\eqref{besonets} is actually for a single group $\mathcal{B}_{\lambda_l,n_l}$, for some $l\in\{1,\ldots,r\}$. But the elements within $\mathcal{B}_{\lambda_l,n_l}$ are linearly independent, as already proved above. Hence, all of the coefficients of~\eqref{besonets} are zero, and we are finished.
\end{proof}

\begin{Remark} 
Analogously to the standard theory on linear ordinary differential equations, the determinant of the matrix~\eqref{matriu_wronskian} is the \textit{L-fractional wronskian} of the elements in $\mathcal{B}_{\lambda_l,n_l}$. In general, we define the L-fractional wronskian of $n$ real analytic functions $\phi_1,\ldots,\phi_n$ as
\[ {}^L\! W^\alpha(\phi_1,\ldots,\phi_n)(t)=\det\begin{pmatrix} \phi_1(t) & \phi_2(t) & \cdots & \phi_n(t) \\ {}^L\! D^{\alpha}\phi_1(t) & {}^L\! D^{\alpha}\phi_2(t) & \cdots & {}^L\! D^{\alpha}\phi_n(t) \\ \vdots & \vdots & \ddots & \vdots \\ {}^L\! D^{(n-1)\circ \alpha}\phi_1(t) & {}^L\! D^{(n-1)\circ \alpha}\phi_2(t) & \cdots & {}^L\! D^{(n-1)\circ \alpha}\phi_n(t) \end{pmatrix}. \]

If there is a point $t_1$ where 
\[ {}^L\! W^\alpha(\phi_1,\ldots,\phi_n)(t_1)\neq0, \]
then $\{\phi_1,\ldots,\phi_n\}$ are linearly independent. This fact was justified in the previous proof. Reciprocally, if $m$ functions in $\mathcal{S}$ are linearly independent, then their L-fractional wronskian is non-zero at $t=0$, because $\dim\mathcal{S}=m$, and the map $\Xi$ in \eqref{xe_xi}--\eqref{xe_xi2} is an isomorphism by Proposition~\ref{vec_sp_SS}. The wronskian appears when the coefficients of a linear combination in the basis $\mathcal{B}$ are found: if $\mathcal{B}=\{e_i\}_{i=1}^m$ and $x(t)=\sum_{i=1}^m c_i e_i(t)\in\mathcal{S}$ with initial conditions~\eqref{sec_probl_ic}, where $c_i\in\mathbb{C}$, then 
\[ {}^L\! W^\alpha(e_1,\ldots,e_m)(0)\cdot \begin{pmatrix} c_1 \\ c_2 \\ \vdots \\ c_m \end{pmatrix} = \begin{pmatrix} x_0 \\ x_{0,1} \\ \vdots \\ x_{0,m-1} \end{pmatrix}. \]

For example, the coefficients in~\eqref{masa_1} came from the linear system
\[ \begin{pmatrix} 1 & 1 \\ \lambda_1 & \lambda_2 \end{pmatrix} \begin{pmatrix} c_1 \\ c_2 \end{pmatrix} = \begin{pmatrix} x_0 \\ x_{0,1} \end{pmatrix}, \]
and in~\eqref{masa_2}, from
\[ \begin{pmatrix} 1 & 0 \\ \lambda & 1 \end{pmatrix} \begin{pmatrix} c_1 \\ c_2 \end{pmatrix} = \begin{pmatrix} x_0 \\ x_{0,1} \end{pmatrix}. \]
\end{Remark}

\begin{Example} \label{dutxi1} 
If $\lambda\in\mathbb{C}$, let us see that the solution of
\begin{equation} ({}^L\! D^{\alpha}-\lambda)x=t^l \mathcal{E}_\alpha^{(l)}(\lambda t), \label{lilifam2} \end{equation}
with $l\geq0$ and $x(0)=x_0\in\mathbb{C}$, is
\[ x(t)=\mathcal{E}_\alpha(\lambda t)x_0 + \frac{1}{l+1} t^{l+1} \mathcal{E}_\alpha^{(l+1)}(\lambda t)\in \langle \mathcal{E}_\alpha(\lambda t),t^{l+1} \mathcal{E}_\alpha^{(l+1)}(\lambda t)\rangle, \]
which makes sense with Theorem~\ref{th_super_increib}.

By~\eqref{fiminik1},
\[ \vartheta(t)=t^l \mathcal{E}_\alpha^{(l)}(\lambda t)=\sum_{n=l}^\infty n(n-1)\cdots (n-l+1)\frac{t^n \lambda^{n-l}}{\Gamma(2-\alpha)^n \prod_{j=1}^n \frac{\Gamma(j+1)}{\Gamma(j+1-\alpha)}}. \]

Considering~\eqref{rapidet}, if $\vartheta_n$ denotes the $n$-th term of this power series, we have
\begin{equation} \vartheta_{n-k-1}=(n-k-1)\cdots (n-k-l)\frac{\lambda^{n-k-1-l}}{\Gamma(2-\alpha)^{n-k-1} \prod_{j=1}^{n-k-1} \frac{\Gamma(j+1)}{\Gamma(j+1-\alpha)}} \label{peuitoos} \end{equation}
for $n-k-1\geq l$, and
\[ \vartheta_{n-k-1}=0 \]
for $n-k-1<l$. By Theorem~\ref{soccarr2}, the solution of~\eqref{lilifam2} is
\begin{align}
x(t)= {} & \mathcal{E}_\alpha(\lambda t)x_0 + \sum_{n=l+1}^\infty \sum_{k=0}^{n-1-l} \lambda^k \frac{\prod_{j=n-k}^n \Gamma(j-\alpha+1)}{\Gamma(2-\alpha)^{k+1}\prod_{j=n-k}^n\Gamma(j+1)}t^n \vartheta_{n-k-1} \nonumber \\
= {} & \mathcal{E}_\alpha(\lambda t)x_0 + \sum_{n=l+1}^\infty \sum_{k=0}^{n-1-l} (n-k-1)\cdots (n-k-l) \frac{\prod_{j=1}^n \Gamma(j-\alpha+1)}{\Gamma(2-\alpha)^n \prod_{j=1}^n \Gamma(j+1)}t^n \lambda^k \lambda^{n-k-1-l} \label{acabii1} \\
= {} & \mathcal{E}_\alpha(\lambda t)x_0 + \sum_{n=l+1}^\infty \lambda^{n-1-l} \frac{\prod_{j=1}^n \Gamma(j-\alpha+1)}{\Gamma(2-\alpha)^n \prod_{j=1}^n \Gamma(j+1)}t^n \sum_{k=0}^{n-1-l} (n-k-1)\cdots (n-k-l) \nonumber \\
= {} & \mathcal{E}_\alpha(\lambda t)x_0 + \frac{1}{l+1} t^{l+1}\sum_{n=l+1}^\infty n(n-1)\cdots (n-l) \lambda^{n-1-l} \frac{\prod_{j=1}^n \Gamma(j-\alpha+1)}{\Gamma(2-\alpha)^n \prod_{j=1}^n \Gamma(j+1)}t^{n-1-l} \label{acabii2} \\
= {} & \mathcal{E}_\alpha(\lambda t)x_0 + \frac{1}{l+1} t^{l+1} \mathcal{E}_\alpha^{(l+1)}(\lambda t). \nonumber
\end{align}
The equality in~\eqref{acabii1} comes from~\eqref{peuitoos}. The equality in~\eqref{acabii2} follows from the identity
\begin{equation} (l+1)\sum_{k=0}^{n-1-l} (n-k-1)\cdots (n-k-l)=n(n-1)\cdots (n-l), \label{jeiloo} \end{equation}
which is easy to prove by computing from $k=n-1-l$ to $k=0$, adding term by term and taking common factors.
\end{Example}

\begin{Example} \label{dutxi2} 
For $0\neq \lambda\in\mathbb{C}$, let us see that the solution of
\begin{equation} ({}^L\! D^{\alpha}-\lambda)x=t^l \mathcal{E}_\alpha^{(l)}(0)=\frac{l!}{\Gamma(2-\alpha)^l \prod_{j=1}^l \frac{\Gamma(j+1)}{\Gamma(j+1-\alpha)}} t^l 
, \label{lilisit}
\end{equation}
with $l\geq0$ and $x(0)=x_0\in\mathbb{C}$, is
\vspace{6pt}
\begin{align*}
 x(t)= {} & \mathcal{E}_\alpha(\lambda t)x_0 + \frac{l!}{\lambda^{l+1}}\left( \mathcal{E}_\alpha(\lambda t) - q_l(\lambda t) \right) \\
 {} & \in \langle \mathcal{E}_\alpha(\lambda t),\mathcal{E}_\alpha(0\cdot t),t \mathcal{E}_\alpha'(0\cdot t),\ldots,t^l \mathcal{E}_\alpha^{(l)}(0\cdot t)\rangle,
\end{align*}
where 
\begin{equation} q_l(\lambda t)=\sum_{n=0}^l \lambda^n t^n \frac{\prod_{j=1}^n \Gamma(j+1-\alpha)}{\Gamma(2-\alpha)^n \prod_{j=1}^n \Gamma(j+1)} \label{quieteq} \end{equation}
is a polynomial of degree $l$. This result agrees with Theorem~\ref{th_super_increib}.

Considering the forcing term
\[ \vartheta(t)=t^l \mathcal{E}_\alpha^{(l)}(0) \]
and~\eqref{rapidet}, we have
\begin{equation} \vartheta_{n-k-1}=\mathcal{E}_\alpha^{(n-k-1)}(0)=\frac{(n-k-1)!}{\Gamma(2-\alpha)^{n-k-1} \prod_{j=1}^{n-k-1} \frac{\Gamma(j+1)}{\Gamma(j+1-\alpha)}} \label{rmchisi} \end{equation}
for $n-k-1=l$, and
\[ \vartheta_{n-k-1}=0 \]
for $n-k-1\neq l$. By Theorem~\ref{soccarr2}, the solution of~\eqref{lilisit} is
\begin{align}
x(t)= {} & \mathcal{E}_\alpha(\lambda t)x_0 + \sum_{n=l+1}^\infty \sum_{k=0}^{n-1-l} \lambda^k \frac{\prod_{j=n-k}^n \Gamma(j-\alpha+1)}{\Gamma(2-\alpha)^{k+1}\prod_{j=n-k}^n\Gamma(j+1)}t^n \vartheta_{n-k-1} \nonumber \\
= {} & \mathcal{E}_\alpha(\lambda t)x_0 + l! \sum_{n=l+1}^\infty \frac{\prod_{j=1}^n \Gamma(j-\alpha+1)}{\Gamma(2-\alpha)^n \prod_{j=1}^n \Gamma(j+1)} t^n \lambda^{n-1-l} \label{fjdskfjld} \\
= {} & \mathcal{E}_\alpha(\lambda t)x_0 + \frac{l!}{\lambda^{l+1}} \left( \mathcal{E}_\alpha(\lambda t) - q_l(\lambda t) \right) \label{quis33}
\end{align}

The equality in~\eqref{fjdskfjld} is due to~\eqref{rmchisi}. In~\eqref{quis33}, the definition~\eqref{quieteq} is used. Notice that $q_l(\lambda t)$ is the $l$-th partial sum of $\mathcal{E}_\alpha(\lambda t)$.
\end{Example}

\begin{Corollary} \label{coro_espre}
In the context of this section, consider the non-homogeneous equation
\begin{equation} ({}^L\! D^\alpha-\lambda_1)^{n_1}\circ \cdots \circ ({}^L\! D^\alpha-\lambda_r)^{n_r}x(t)=\sum_{j=0}^J \beta_j t^j \mathcal{E}_\alpha^{(j)}(\mu t), \label{mai_acb} \end{equation}
where $\beta_j,\mu\in\mathbb{C}$ and $J\geq0$. 

If $\mu\neq \lambda_l$ for every $l=1,\ldots,r$, then
\begin{equation} x\in \langle \mathcal{B}\cup \{t^j \mathcal{E}_\alpha^{(j)}(\mu t):\,j=0,\ldots,J\}\rangle. \label{perfichiA} \end{equation}

On the contrary, if $\mu=\lambda_{l_0}$ for some $l_0\in\{1,\ldots,r\}$, then
\begin{equation} x\in \langle \left(\bigcup_{l\neq l_0}\mathcal{B}_{\lambda_l,n_l}\right)\cup \{t^k \mathcal{E}_\alpha^{(k)}(\lambda_{l_0} t):\,k=0,\ldots,J+n_{l_0}\}\rangle. \label{perfichiB} \end{equation}

Recall that $\mathcal{B}=\cup_{l=1}^r \mathcal{B}_{\lambda_l,n_l}$ is the basis of the homogeneous part of~\eqref{mai_acb}; see~\eqref{basiss_subs} and~\eqref{basiss_subs_22}.
\end{Corollary}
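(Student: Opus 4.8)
The plan is to use the classical annihilator method, adapted to the L-fractional sequential setting, where all of the structural facts needed are already supplied by Theorem~\ref{th_super_increib}. Write $L=({}^L\! D^\alpha-\lambda_1)^{n_1}\circ \cdots \circ ({}^L\! D^\alpha-\lambda_r)^{n_r}$ for the operator on the left-hand side of~\eqref{mai_acb}, and let $g(t)=\sum_{j=0}^J \beta_j t^j \mathcal{E}_\alpha^{(j)}(\mu t)$ denote the forcing term. The homogeneous contribution to $x$ always lies in $\mathcal{S}=\langle \mathcal{B}\rangle$ by Theorem~\ref{th_super_increib}, so the whole task reduces to locating a particular solution; rather than constructing it explicitly, I would annihilate $g$ and read off the enlarged solution space directly.

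First I would record the key annihilation property of the forcing term. From the computation in the proof of Theorem~\ref{th_super_increib} leading to~\eqref{auri_crit}, each generating function satisfies $({}^L\! D^\alpha-\mu)^{j+1}\bigl(t^j \mathcal{E}_\alpha^{(j)}(\mu t)\bigr)=0$. Since $J+1\geq j+1$ for every $0\leq j\leq J$, the single operator $({}^L\! D^\alpha-\mu)^{J+1}$ annihilates each summand of $g$, hence $({}^L\! D^\alpha-\mu)^{J+1}g=0$. Applying this operator to both sides of~\eqref{mai_acb} and using that all factors ${}^L\! D^\alpha-\lambda_l$ and ${}^L\! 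D^\alpha-\mu$ commute (as noted after~\eqref{desc_D_lini}), I obtain the purely homogeneous sequential equation
\[ ({}^L\! D^\alpha-\mu)^{J+1}\circ L\, x=0. \]
Thus $x$ belongs to the kernel of an operator of exactly the type treated in Theorem~\ref{th_super_increib}, and it remains only to identify its characteristic roots with their multiplicities.

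In the case $\mu\neq\lambda_l$ for every $l$, the operator $({}^L\! D^\alpha-\mu)^{J+1}\circ L$ has distinct roots $\lambda_1,\ldots,\lambda_r,\mu$ with multiplicities $n_1,\ldots,n_r,J+1$, so Theorem~\ref{th_super_increib} produces the basis $\mathcal{B}\cup\{t^j\mathcal{E}_\alpha^{(j)}(\mu t):j=0,\ldots,J\}$ for its kernel, which is precisely~\eqref{perfichiA}. In the resonant case $\mu=\lambda_{l_0}$, commutativity lets me merge the exponents at $\lambda_{l_0}$, writing
\[ ({}^L\! D^\alpha-\lambda_{l_0})^{J+1}\circ L=({}^L\! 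D^\alpha-\lambda_{l_0})^{n_{l_0}+J+1}\circ \prod_{l\neq l_0}({}^L\! D^\alpha-\lambda_l)^{n_l}; \]
the root $\lambda_{l_0}$ now carries multiplicity $n_{l_0}+J+1$, so Theorem~\ref{th_super_increib} yields the functions $t^k\mathcal{E}_\alpha^{(k)}(\lambda_{l_0}t)$ for $k=0,\ldots,n_{l_0}+J$ together with the unchanged blocks $\mathcal{B}_{\lambda_l,n_l}$ for $l\neq l_0$, which is exactly~\eqref{perfichiB}.

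The argument is essentially routine once the annihilation identity is in place, and I do not anticipate a genuine obstacle: the analytic subtleties (validity of term-by-term differentiation and of the fundamental theorem of calculus at \emph{every} $t$, not merely almost everywhere) have already been settled through Corollary~\ref{cor_D_s} and Theorem~\ref{soccarr2}, and commutativity of the first-order factors is inherited from the scalar operators. The one point requiring care is the bookkeeping of multiplicities in the resonant case, ensuring the shift by $n_{l_0}$ is tracked correctly so that the enlarged block has the stated length $n_{l_0}+J+1$. I would also emphasize that this characterization only \emph{locates} the solution space; the actual coefficients (the ``undetermined'' ones) must still be fixed by substituting the corresponding ansatz from~\eqref{perfichiA} or~\eqref{perfichiB} back into~\eqref{mai_acb}, exactly as in the classical method.
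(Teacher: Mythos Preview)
Your proposal is correct and follows essentially the same annihilator-method argument as the paper: apply $({}^L\! D^\alpha-\mu)^{J+1}$ to both sides of~\eqref{mai_acb}, reduce to a homogeneous sequential equation, and read off the solution space from Theorem~\ref{th_super_increib}, distinguishing the resonant and non-resonant cases by tracking the merged multiplicity at $\lambda_{l_0}$. Your write-up is in fact slightly more explicit than the paper's, since you cite~\eqref{auri_crit} for the annihilation property and spell out the commutativity of the first-order factors.
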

\begin{proof}
The uniqueness of solution for~\eqref{mai_acb} (given $m=n_1+\ldots+n_r$ initial conditions~\eqref{sec_probl_ic}) is known by Proposition~\ref{propo_canoi} and Proposition~\ref{prpo_unic}. 

Consider the linear map $\Lambda$ from~\eqref{xe_lambda} and~\eqref{xe_lambda2}, which represents the homogeneous part of~\eqref{mai_acb}.

If $\mu\neq \lambda_l$ for every $l=1,\ldots,r$, then~\eqref{mai_acb} is equivalent to
\begin{equation} ({}^L\! D^\alpha - \mu)^{J+1}\Lambda x=0, \label{perfichi} \end{equation}
for adequate initial conditions, by Theorem~\ref{th_super_increib}. By Theorem~\ref{th_super_increib} again, the solution to~\eqref{perfichi} satisfies~\eqref{perfichiA}.

On the other hand, if $\mu=\lambda_{l_0}$ for some $l_0\in\{1,\ldots,r\}$, then~\eqref{mai_acb} is equivalent to
\begin{equation} ({}^L\! D^\alpha - \lambda_{l_0})^{J+1}\Lambda x=0, \label{perfichi234} \end{equation}
for adequate initial conditions, by Theorem~\ref{th_super_increib}. Notice that $({}^L\! D^\alpha - \lambda_{l_0})^{J+1}\Lambda$ has the factor $({}^L\! D^\alpha - \lambda_{l_0})^{J+1+n_{l_0}}$. By Theorem~\ref{th_super_increib} again, the solution to~\eqref{perfichi234} satisfies~\eqref{perfichiB}.
\end{proof}

\begin{Example} 
We work with a specific numerical case of~\eqref{mai_acb}:
\begin{equation} {}^L\! D^{2\circ \alpha}x-2\cdot {}^L\! D^\alpha x+x=3\mathcal{E}_\alpha(2t), \label{jihinjk} \end{equation}
with initial states
\begin{equation} x(0)=3,\quad {}^L\! D^\alpha x(0)=-1. \label{xzdvdx} \end{equation}

According to Corollary~\ref{coro_espre},
\[ x(t)=\beta_1 \mathcal{E}_\alpha(t)+\beta_2 t \mathcal{E}_\alpha'(t)+\gamma \mathcal{E}_\alpha(2t), \]
where $\beta_1,\beta_2,\gamma\in\mathbb{R}$ are values to be determined. Since $\gamma \mathcal{E}_\alpha(2t)$ is a particular solution of~\eqref{jihinjk}, we have
\[ 4\gamma\mathcal{E}_\alpha(2t)-4\gamma\mathcal{E}_\alpha(2t)+\gamma\mathcal{E}_\alpha(2t)=3\mathcal{E}_\alpha(2t)\,\Rightarrow\, \gamma=3. \]

The other two coefficients are determined from the initial conditions~\eqref{xzdvdx}. First, since $\mathcal{E}_\alpha(0)=1$,
\[ 3=x(0)=\beta_1+\gamma\,\Rightarrow\,\beta_1=0. \]

Second, since ${}^L\! D^\alpha (t \mathcal{E}_\alpha'(t))|_{t=0}=1$ (see~\eqref{qpeowidjsn}),
\[ -1={}^L\! D^\alpha x(0)=\beta_1+\beta_2+2\gamma\,\Rightarrow\,\beta_2=-7. \]
\end{Example}

\begin{Example} 
Now, we deal with the numerical example~\eqref{mai_acb}
\begin{equation} {}^L\! D^{2\circ \alpha}x-2\cdot {}^L\! D^\alpha x+x=3\mathcal{E}_\alpha(t), \label{jihinjk22} \end{equation}
with initial states~\eqref{xzdvdx}. By Corollary~\ref{coro_espre},
\[ x(t)=\beta_1 \mathcal{E}_\alpha(t)+\beta_2 t \mathcal{E}_\alpha'(t)+\gamma t^2\mathcal{E}_\alpha''(t), \]
where $\beta_1,\beta_2,\gamma\in\mathbb{R}$ are values to be determined. We have the fact that $\gamma t^2\mathcal{E}_\alpha''(t)$ is a particular solution of~\eqref{jihinjk22}, which satisfies
\[ 3\mathcal{E}_\alpha(t)=({}^L\! D^\alpha-1)^2 (\gamma t^2\mathcal{E}_\alpha''(t))=2\gamma \mathcal{E}_\alpha(t), \]
by the previous computation~\eqref{auri_crit}. Thus,
\[ \gamma=\frac32. \]

By employing~\eqref{xzdvdx}, we determine $\beta_1$ and $\beta_2$:
\[ 3=x(0)=\beta_1, \]
\begin{align*}
 -1= {} & {}^L\! D^\alpha x(0) \\
= {} & \beta_1\cdot {}^L\! D^\alpha \mathcal{E}_\alpha(t)|_{t=0}+\beta_2 \cdot {}^L\! D^\alpha( t\mathcal{E}_\alpha'(t))|_{t=0}+ \gamma\cdot {}^L\! D^\alpha(t^2\mathcal{E}_\alpha''(t))|_{t=0} \\
= {} & \beta_1+\beta_2 \\
 \Rightarrow {} & \, \beta_2=-4.
\end{align*}

We used~\eqref{fiminik1} to compute ${}^L\! D^\alpha(t^2\mathcal{E}_\alpha''(t))|_{t=0}=0$ and~\eqref{qpeowidjsn} for ${}^L\! D^\alpha (t \mathcal{E}_\alpha'(t))|_{t=0}=1$.
\end{Example}

\begin{Example} 
Finally, in the complex field, consider~\eqref{mai_acb} given by
\begin{equation} {}^L\! D^{2\circ \alpha}x-2\mathrm{i}\cdot {}^L\! D^\alpha x-x=3\mathcal{E}_\alpha(t), \label{jihinjk33} \end{equation}
with initial conditions~\eqref{xzdvdx} and the imaginary unit $\mathrm{i}=\sqrt{-1}$. Corollary~\ref{coro_espre} states that
\[ x(t)=\beta_1 \mathcal{E}_\alpha(\mathrm{i}t)+\beta_2 t \mathcal{E}_\alpha'(\mathrm{i}t)+\gamma \mathcal{E}_\alpha(t), \]
where $\beta_1,\beta_2,\gamma\in\mathbb{C}$. Since $\gamma \mathcal{E}_\alpha(t)$ is a particular solution of~\eqref{jihinjk33}, we have
\[ \gamma\mathcal{E}_\alpha(t)-2\mathrm{i}\gamma\mathcal{E}_\alpha(t)-\gamma\mathcal{E}_\alpha(t)=3\mathcal{E}_\alpha(t)\,\Rightarrow\, \gamma=\frac{3}{2}\mathrm{i}. \]

For $\beta_1$ and $\beta_2$, we use~\eqref{xzdvdx}, as in the other two examples:
\[ 3=x(0)=\beta_1+\gamma\,\Rightarrow\,\beta_1=3-\frac{3}{2}\mathrm{i}, \]
\[ -1={}^L\! D^\alpha x(0)=\beta_1 \mathrm{i}+\beta_2+\gamma\,\Rightarrow\,\beta_2=-\frac{5}{2}-\frac{9}{2}\mathrm{i}. \]
\end{Example}

\begin{Remark} 
In the context of Examples~\ref{dutxi1} and~\ref{dutxi2}, let us try to solve
\begin{equation} ({}^L\! D^{\alpha}-\lambda_1)x=t^l \mathcal{E}_\alpha^{(l)}(\lambda_2 t)
 \label{lilisit_AAAA}
\end{equation}
in closed form in general, where $\lambda_1\neq\lambda_2$ and $\lambda_2\neq 0$ in $\mathbb{C}$ and $l\geq0$. We will see that the fact of changing of space, from $\langle \mathcal{B}_{\lambda_1,n_1}\rangle$ to $\langle \mathcal{B}_{\lambda_1,n_1}\cup \mathcal{B}_{\lambda_2,n_2}\rangle$, complicates computations. According to the previous results (see Theorem~\ref{th_super_increib} or Corollary~\ref{coro_espre}), the solution of~\eqref{lilisit_AAAA} takes the form
\begin{equation} x(t)=\mathcal{E}_\alpha(\lambda_1 t)c+\sum_{i=0}^l \beta_i t^i \mathcal{E}_\alpha^{(i)}(\lambda_2 t), \label{boolia} \end{equation}
where $c,\beta_i\in\mathbb{C}$. These parameters need to be determined. On the one hand,
\begin{equation} t^l \mathcal{E}_\alpha^{(l)}(\lambda_2 t)=\sum_{n=l}^\infty n(n-1)\cdots (n-l+1)\frac{\lambda_2^{n-l}}{\Gamma(2-\alpha)^n \prod_{j=1}^n \frac{\Gamma(j+1)}{\Gamma(j+1-\alpha)}}t^n, \label{guanjand} \end{equation}
see~\eqref{fiminik1}. On the other hand, some computations on~\eqref{boolia} with power series yield
\vspace{6pt}
\begin{equation}
 \begin{split}
 ({}^L\! D^{\alpha} {} & -\lambda_1)x= c\cdot {}^L\! D^{\alpha} (\mathcal{E}_\alpha(\lambda_1 t))+\sum_{i=0}^l \beta_i\cdot {}^L\! D^{\alpha} ( t^i \mathcal{E}_\alpha^{(i)}(\lambda_2 t)) \\
{} & - c \lambda_1 \mathcal{E}_\alpha(\lambda_1 t) - \lambda_1 \sum_{i=0}^l \beta_i t^i \mathcal{E}_\alpha^{(i)}(\lambda_2 t) \\
= {} & \beta_0 \sum_{n=0}^\infty \frac{\lambda_2^{n+1}}{\Gamma(2-\alpha)^n \prod_{j=1}^n \frac{\Gamma(j+1)}{\Gamma(j+1-\alpha)}}t^n \\
{} &+\sum_{i=1}^l \beta_i\sum_{n=i-1}^\infty (n+1)n\cdots (n-i+2)\frac{\lambda_2^{n+1-i}}{\Gamma(2-\alpha)^n \prod_{j=1}^n \frac{\Gamma(j+1)}{\Gamma(j+1-\alpha)}}t^n \\
{} & - \lambda_1\sum_{i=0}^l \beta_i \sum_{n=i}^\infty n(n-1)\cdots (n-i+1)\frac{\lambda_2^{n-i}}{\Gamma(2-\alpha)^n \prod_{j=1}^n \frac{\Gamma(j+1)}{\Gamma(j+1-\alpha)}}t^n \\
= {} & \beta_0 \sum_{n=0}^\infty \frac{\lambda_2^{n+1}}{\Gamma(2-\alpha)^n \prod_{j=1}^n \frac{\Gamma(j+1)}{\Gamma(j+1-\alpha)}}t^n \\
{} & +\sum_{n=0}^\infty \frac{1}{\Gamma(2-\alpha)^n \prod_{j=1}^n \frac{\Gamma(j+1)}{\Gamma(j+1-\alpha)}}\left[ \sum_{i=1}^{\min\{n+1,l\}}\beta_i(n+1)n\cdots (n-i+2)\lambda_2^{n+1-i}\right]t^n \\
{} & -\lambda_1 \sum_{n=0}^\infty \frac{1}{\Gamma(2-\alpha)^n \prod_{j=1}^n \frac{\Gamma(j+1)}{\Gamma(j+1-\alpha)}} \left[\sum_{i=0}^{\min\{n,l\}}\beta_i n(n-1)\cdots (n-i+1)\lambda_2^{n-i}\right]t^n.
\end{split}
 \label{guanjand2}
\end{equation}

After equating~\eqref{guanjand} and~\eqref{guanjand2}, we obtain the relations
\begin{equation}
\begin{split}
 \beta_0 \lambda_2^{n+1}+ {} & \sum_{i=1}^{n+1}\beta_i(n+1)n\cdots (n-i+2)\lambda_2^{n+1-i} \\
 {} & -\lambda_1 \sum_{i=0}^{n}\beta_i n(n-1)\cdots (n-i+1)\lambda_2^{n-i}=0 
\end{split}
 \label{descxop}
\end{equation}
for $0\leq n<l$, and
\begin{equation}
\begin{split}
 \beta_0 \lambda_2^{n+1}+ {} & \sum_{i=1}^{\min\{n+1,l\}}\beta_i(n+1)n\cdots (n-i+2)\lambda_2^{n+1-i} \\
{} & -\lambda_1 \sum_{i=0}^{\min\{n,l\}}\beta_i n(n-1)\cdots (n-i+1)\lambda_2^{n-i} \\
= {} & n(n-1)\cdots (n-l+1)\lambda_2^{n-l} 
 \end{split}
 \label{descxop2}
\end{equation}
for $n\geq l$. The linear equations~\eqref{descxop} can be rewritten, for $0\leq n<l$:
\begin{equation} \beta_{n+1}=\frac{1}{(n+1)!}\left[ -\beta_0 \lambda_2^{n+1} + \sum_{i=0}^{n}\beta_i n(n-1)\cdots (n-i+2)\lambda_2^{n-i}\left(\lambda_1(n-i+1)-\lambda_2(n+1)\right)\right]. \label{voiaor0}
\end{equation}
To determine $\beta_0$, because it cannot be a free parameter, the equation~\eqref{descxop2} is employed for $n=l$:
\begin{equation}
\begin{split}
 \beta_0 \lambda_2^{l+1}+ {} & \sum_{i=1}^{l}\beta_i(l+1)l\cdots (l-i+2)\lambda_2^{l+1-i} \\
{} & -\lambda_1 \sum_{i=0}^{l}\beta_i l(l-1)\cdots (l-i+1)\lambda_2^{l-i} = l! 
 \end{split}
 \label{voiaor2}
\end{equation}

For $c$, one simply uses the initial condition,
\[ x_0=x(0)=c+\beta_0. \]

The $l+1$ equations~\eqref{voiaor0} and~\eqref{voiaor2} cannot be decoupled, in general, for symbolic variables.

If Theorem~\ref{soccarr2} is employed to directly deal with~\eqref{lilisit_AAAA} based on power series, as in Examples~\ref{dutxi1} and~\ref{dutxi2}, we have the expression
\begin{align*}
 x(t)= {} & \mathcal{E}_\alpha(\lambda_1 t)x_0 \\
{} & + \sum_{n=l+1}^\infty \frac{\prod_{j=1}^n \Gamma(j+1-\alpha)}{\Gamma(2-\alpha)^n \prod_{j=1}^n \Gamma(j+1)}t^n \sum_{k=0}^{n-1-l} (n-k-1)\cdots(n-k-l)\lambda_1^k \lambda_2^{n-k-1-l}. 
\end{align*}

Compared to~\eqref{jeiloo}, the sum
\[ \sum_{k=0}^{n-1-l} (n-k-1)\cdots(n-k-l)\lambda_1^k \lambda_2^{n-k-1-l} \]
does not seem to be solvable in explicit form for $\lambda_1\neq\lambda_2$.
\end{Remark}

\begin{Remark} 
An alternative development to Theorem~\ref{th_super_increib} may be carried out, based upon the Jordan form of $\mathcal{A}$ in~\eqref{redu_lin_siss}, to compute the solution 
\begin{equation}
\hat{x}(t)=\mathcal{E}_\alpha(\mathcal{A}t)\hat{x}_0, 
 \label{sort_tindrie}
\end{equation}
where 
\[ \hat{x}_0=\hat{x}(0)=(x_0,x_{0,1},\ldots,x_{0,m-1})^\top\in\mathbb{C}^m. \]

First of all, we notice in this remark that care must be exercised, since some methods for the standard case $\alpha=1$ do not apply when $\alpha<1$. For example, the new Mittag--Leffler-type function~\eqref{mlf2} (the same occurs for the classical Mittag--Leffler function~\eqref{mlf}) does not satisfy the product property of the exponential
\begin{equation} \mathrm{e}^{\mathcal{A}_1+\mathcal{A}_2}=\mathrm{e}^{\mathcal{A}_1}\mathrm{e}^{\mathcal{A}_2}, \label{otro_pai} \end{equation}
\begin{equation} E_\alpha(\mathcal{A}_1+\mathcal{A}_2)\neq E_\alpha(\mathcal{A}_1) E_\alpha(\mathcal{A}_2), \label{otro_pai22} \end{equation}
\begin{equation} \mathcal{E}_\alpha(\mathcal{A}_1+\mathcal{A}_2)\neq\mathcal{E}_\alpha(\mathcal{A}_1)\mathcal{E}_\alpha(\mathcal{A}_2), \label{otro_pai33} \end{equation}
when the matrices $\mathcal{A}_1$ and $\mathcal{A}_2$ commute, in general. The property~\eqref{otro_pai}, which is based on the binomial expansion and canceling out the factorial, is key to compute $\mathrm{e}^{\mathcal{A}}$ when $\mathcal{A}$ is not diagonalizable. For example, a Jordan block 
\[ \mathcal{J}=\mu \mathrm{Id}+\mathcal{N}, \]
where $\mu\in\mathbb{C}$ is an eigenvalue and $\mathcal{N}$ is a nilpotent matrix, satisfies 
\[ \mathrm{e}^\mathcal{J}=\mathrm{e}^\mu \mathrm{e}^{\mathcal{N}}; \]
hence, $\mathrm{e}^\mathcal{J}$ is a finite sum. However, in general, 
\[ \mathcal{E}_\alpha(\mathcal{J})\neq \mathcal{E}_\alpha(\mu)\mathcal{E}_\alpha(\mathcal{N}). \]

Indeed, if $\mathcal{N}^{N_0}=0$, then
\vspace{6pt}
\begin{align*}
\mathcal{E}_\alpha(\mathcal{J})= {} & \sum_{n=0}^\infty \frac{\mathcal{J}^n}{\Gamma(2-\alpha)^n \prod_{j=1}^n \frac{\Gamma(j+1)}{\Gamma(j+1-\alpha)}} \\
= {} & \sum_{n=0}^\infty \frac{(\mu \mathrm{Id}+\mathcal{N})^n}{\Gamma(2-\alpha)^n \prod_{j=1}^n \frac{\Gamma(j+1)}{\Gamma(j+1-\alpha)}} \\
= {} & \sum_{n=0}^\infty \frac{1}{\Gamma(2-\alpha)^n \prod_{j=1}^n \frac{\Gamma(j+1)}{\Gamma(j+1-\alpha)}}\sum_{k=0}^n \binom{n}{k}\mu^{n-k}\mathcal{N}^k \\
= {} & \sum_{n=0}^{N_0} \frac{1}{\Gamma(2-\alpha)^n \prod_{j=1}^n \frac{\Gamma(j+1)}{\Gamma(j+1-\alpha)}}\sum_{k=0}^n \binom{n}{k}\mu^{n-k}\mathcal{N}^k \\
{} & + \sum_{n=N_0+1}^\infty \frac{1}{\Gamma(2-\alpha)^n \prod_{j=1}^n \frac{\Gamma(j+1)}{\Gamma(j+1-\alpha)}}\sum_{k=0}^{N_0} \binom{n}{k}\mu^{n-k}\mathcal{N}^k,
\end{align*}
which is an infinite series. Likewise, if $v$ is a generalized eigenvector of $\mathcal{A}$ associated with an eigenvalue $\mu\in\mathbb{C}$, then 
\[ \mathrm{e}^{\mathcal{A}}v=\mathrm{e}^{\mu\mathrm{Id}+(\mathcal{A}-\mu\mathrm{Id})}v=\mathrm{e}^{\mu}\mathrm{e}^{(\mathcal{A}-\mu\mathrm{Id})}v \]
is a finite sum again. Nonetheless, in general,
\[ \mathcal{E}_\alpha(\mathcal{A})v=\mathcal{E}_{\alpha}(\mu\mathrm{Id}+(\mathcal{A}-\mu\mathrm{Id}))v\neq\mathcal{E}_{\alpha}(\mu)\mathcal{E}_\alpha(\mathcal{A}-\mu\mathrm{Id})v. \]

When $\alpha=1$, Liouville's formula states that
\[ \det\mathrm{e}^{\mathcal{A}}=\mathrm{e}^{\mathrm{trace}\mathcal{A}}. \]

For $\alpha<1$, this is not the case in general, not even for diagonalizable matrices $\mathcal{A}$, on account of~\eqref{otro_pai22} and~\eqref{otro_pai33}. 

Another procedure can be followed to avoid the problematic fact that~\eqref{otro_pai33}. 

When the eigenvalues $\lambda_1,\ldots,\lambda_r$ of $\mathcal{A}$ are distinct, then $\mathcal{A}$ is diagonalizable. Let $\mathcal{A}=\mathcal{P}\mathcal{D}\mathcal{P}^{-1}$, where $\mathcal{D}$ is the diagonal matrix of eigenvalues and $\mathcal{P}$ is the invertible matrix of eigenvectors, of size $m\times m$. If $\hat{y}=\mathcal{P}^{-1}\hat{x}$, then ${}^L\! D^\alpha \hat{y}=\mathcal{D}\hat{y}$ and $\hat{y}_0=\hat{y}(0)=\mathcal{P}^{-1}\hat{x}_0$. Therefore,
\[ \hat{y}=\mathcal{E}_\alpha(\mathcal{D}t)\hat{y}_0=\begin{pmatrix} \mathcal{E}_\alpha(\lambda_1 t) & & \\ & \ddots & \\ & & \mathcal{E}_\alpha(\lambda_r t) \end{pmatrix}\hat{y}_0. \]

This implies that
\[ \hat{x}\in\langle \mathcal{E}_\alpha(\lambda_1 t),\ldots,\mathcal{E}_\alpha(\lambda_r t) \rangle. \]

From
\[ ({}^L\! D^\alpha - \lambda_i )(\mathcal{E}_\alpha(\lambda_i t))=0, \]
it is clear that $\mathcal{E}_\alpha(\lambda_i t)$ solves the sequential problem~\eqref{desc_D_lini}. Since the cardinality of
\begin{equation} \{\mathcal{E}_\alpha(\lambda_1 t),\ldots,\mathcal{E}_\alpha(\lambda_r t)\}\subseteq\mathcal{S} \label{medic_vulll} \end{equation} 
is $m$ and $\dim\mathcal{S}=m$---see Proposition~\ref{vec_sp_SS}---we obtain that~\eqref{medic_vulll} is a basis for $\mathcal{S}$. 

When there are repeated eigenvalues, the matrix $\mathcal{A}$ in~\eqref{redu_lin_siss} cannot be diagonalizable because the minimal polynomial coincides with the characteristic polynomial here. Then, $\mathcal{A}$ is expressed with Jordan blocks $\mathcal{J}_1,\ldots,\mathcal{J}_r$ of size $n_1\times n_1,\ldots,n_r\times n_r$, respectively. Let $\mathcal{A}=\mathcal{P}\mathcal{J}\mathcal{P}^{-1}$, where $\mathcal{J}$ is the Jordan form and $\mathcal{P}$ is the invertible matrix of generalized eigenvectors. If $\hat{y}=\mathcal{P}^{-1}\hat{x}$, then ${}^L\! D^\alpha \hat{y}=\mathcal{J}\hat{y}$ and $\hat{y}_0=\hat{y}(0)=\mathcal{P}^{-1}\hat{x}_0$. Therefore,
\vspace{6pt}
\[ \hat{y}=\mathcal{E}_\alpha(\mathcal{J}t)\hat{y}_0=\begin{pmatrix} \mathcal{E}_\alpha(\mathcal{J}_1 t) & & \\ & \ddots & \\ & & \mathcal{E}_\alpha(\mathcal{J}_r t) \end{pmatrix}\hat{y}_0. \]

For each $\mathcal{E}_\alpha(\mathcal{J}_i t)$, where $\mathcal{J}_i=\lambda_i\mathrm{Id}+\mathcal{N}_i$, we use a matrix Taylor development:
\begin{align*}
\mathcal{E}_\alpha(\mathcal{J}_i t)= {} & \mathcal{E}_\alpha(\lambda_i t\mathrm{Id} + \mathcal{N}_i t) \\
= {} & \sum_{n=0}^{\infty}\frac{1}{n!}\mathcal{E}_\alpha^{(n)}(\lambda_i t)t^n \mathcal{N}_i^n \\
= {} & \sum_{n=0}^{n_i-1}\frac{1}{n!}\mathcal{E}_\alpha^{(n)}(\lambda_i t)t^n \mathcal{N}_i^n.
\end{align*}

Hence
\[ \hat{x}\in\langle \{t^n \mathcal{E}_\alpha^{(n)}(\lambda_i t):\,n=0,\ldots,n_i-1,\,i=1,\ldots,r\} \rangle. \]

Nonetheless, to prove that
\begin{equation} \{t^n \mathcal{E}_\alpha^{(n)}(\lambda_i t):\,n=0,\ldots,n_i-1,\,i=1,\ldots,r\}\subseteq \mathcal{S}, \label{reigiis} \end{equation}
one needs to establish
\[ ({}^L\! D^\alpha - \lambda_i )^{n_i}(t^n \mathcal{E}_\alpha^{(n)}(\lambda_i t))=0 \]
for $n=0,\ldots,n_i-1$. Then, one should proceed as in the proof of Theorem~\ref{th_super_increib}, from~\eqref{fiminik1} until~\eqref{panxinini} and~\eqref{fins_aciii}. For $\alpha=1$, it is more or less straightforward that~\eqref{reigiis} holds, but further computations are needed for the fractional case. Once~\eqref{reigiis} is known, the fact that $\dim\mathcal{S}=m$ from Proposition~\ref{vec_sp_SS} entails that~\eqref{reigiis} is a basis for $\mathcal{S}$.

I decided to conduct the methodology based on scalar power series because of the following facts.
\begin{itemize}
\item It uses the decomposition~\eqref{desc_D_lini} and scalar first-order linear problems iteratively, which enlightens the underlying structure of the problem. This is specially true for the order $m=2$.
\item Essentially, one needs to prove~\eqref{panxinini} and~\eqref{fins_aciii} in any case, to ensure that the functions belong to $\mathcal{S}$. That is the difficult part.
\item With~\eqref{desc_D_lini}, only the upper bound $\dim\mathcal{S}\leq m$ is needed, which can be established from uniqueness or from the first isomorphism theorem; see~\eqref{seqke}. For~\eqref{seqke}, previous existence-and-uniqueness theory or results for linear differential systems are not  a prerequisite.
\item Although well known, the equality between the minimal polynomial and the characteristic polynomial of $\mathcal{A}$ is a key step to distinguish between multiplicities equal to one and repeated eigenvalues. With our methodology, no Jordan forms, generalized eigenvectors or minimal polynomials are needed. Matrix Taylor series are not  required either.
\item Our theory, based on~\eqref{desc_D_lini}, immediately gives the method of undetermined coefficients as a consequence; see Corollary~\ref{coro_espre}. Non-homogeneous equations with certain forcing terms---see Examples~\ref{dutxi1} and~\ref{dutxi2}---can be addressed.
\item Power series have gained importance in the study of fractional models in recent years; see the Introduction section. We show their use for arbitrary sequential problems.
\end{itemize}
\end{Remark}

\section{Sequential Linear Equations with Analytic Coefficients and Order Two: Context and Solution} \label{sec_sequ_AA}

The aim of this section is the study of non-autonomous linear L-fractional equations of sequential type {to extend the analysis conducted in the earlier section.} We focus on the case of order two, with analytic functions. First, we provide the context of the problem, and then we solve it.

\subsection{Context} 

In the previous section, we solved the autonomous linear equation~\eqref{linear_seq_1}, with the operator's decomposition~\eqref{desc_D_lini}. When the coefficients are not constant, such a procedure cannot be carried out.

In this part, we address the following non-autonomous linear equation in dimension $d=1$:
\begin{equation}
{}^L\! D^{2\circ \alpha}x(t)+p(t)\cdot {}^L\! D^\alpha x(t)+q(t)x(t)=c(t),
 \label{decl_juri}
\end{equation}
where
\begin{equation} p(t)=\sum_{n=0}^\infty p_n t^n,\;\; q(t)=\sum_{n=0}^\infty q_n t^n,\;\; c(t)=\sum_{n=0}^\infty c_n t^n \label{hypopopo} \end{equation}
are any power series that are convergent on an interval $[0,T]$, with terms $p_n,q_n,c_n\in\mathbb{C}$. Again, ${}^L\! D^{2\circ \alpha}$ is understood sequentially, as ${}^L\! D^\alpha \circ {}^L\! D^\alpha$. Like in the classical model with ordinary derivative, we seek a power-series solution for~\eqref{decl_juri}. Compared to Theorem~\ref{th_super_increib}, the coefficients of this power series will not be given in the closed form (see~\eqref{vaig_vaigs}).

By Proposition~\ref{propo_canoi}, the equation~\eqref{decl_juri} can be written as a first-order equation of dimension $2$. If $\mathcal{S}$ is the vector space of solutions of the homogeneous part of~\eqref{decl_juri}, then $\dim\mathcal{S}\leq 2$, by the uniqueness Proposition~\ref{prpo_unic}. Indeed, the linear map 
\[ \tilde{\Xi}:\,\mathcal{S}\rightarrow\mathbb{C}^2, \]
\[ \tilde{\Xi} x=(x(0),{}^L\! D^{\alpha} x(0)) \]
is injective. Since we have not tackled non-autonomous equations of the type ${}^L\! D^\alpha z(t)=\tilde{\mathcal{A}}(t)z(t)$, where $\tilde{\mathcal{A}}$ is a continuous matrix function, we cannot ensure the surjectivity of $\tilde{\Xi}$ for the moment. In what follows, we will establish two linearly independent power series that form a basis for $\mathcal{S}$.

For~\eqref{decl_juri}, initial data are defined by~\eqref{ic_dim_m2}.

\subsection{Results}

The main theorem of this section is the following. After discussing it, we will discuss two examples,   the L-fractional Airy's equation and the L-fractional Hermite's equation.

\begin{Theorem} \label{results_darre_part}
Given~\eqref{decl_juri} with coefficients~\eqref{hypopopo} on $[0,T]$, the general solution on $[0,T)$ is given by
\begin{equation} x(t)=\sum_{n=0}^\infty x_n t^n, \label{alca_cami} \end{equation}
where
\begin{equation} x_{n+2}=\frac{\Gamma(n+3-\alpha)\Gamma(n+2-\alpha)}{\Gamma(n+3)\Gamma(n+2)\Gamma(2-\alpha)^2}\left[-\sum_{l=0}^n p_{n-l}x_{l+1}\frac{\Gamma(l+2)\Gamma(2-\alpha)}{\Gamma(l+2-\alpha)}-\sum_{l=0}^n q_{n-l}x_l+c_n\right]. \label{vaig_vaigs} \end{equation}
The coefficients $x_0$ and $x_1$ correspond to $x(0)=x_0$ and ${}^L\! D^{\alpha} x(0)=x_{0,1}$, respectively. A basis of the homogeneous part ($c_n=0$) is obtained for $(x_0,x_{0,1})=(1,0)$ and $(x_0,x_{0,1})=(0,1)$, respectively.
\end{Theorem}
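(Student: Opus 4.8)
The plan is to prove the theorem in two stages: first derive the recurrence~\eqref{vaig_vaigs} by a purely formal power-series computation, and then show that the series so obtained converges on $[0,T)$, which is where the real work lies. For the formal stage I would insert the ansatz~\eqref{alca_cami}, $x(t)=\sum_{n=0}^\infty x_n t^n$, into~\eqref{decl_juri} and apply the operators term by term using the rule ${}^L\! D^\alpha(t^m)=\frac{\Gamma(m+1)\Gamma(2-\alpha)}{\Gamma(m+1-\alpha)}t^{m-1}$ already recorded in~\eqref{formal_stl}. One application gives ${}^L\! D^\alpha x(t)=\sum_{n=0}^\infty x_{n+1}\frac{\Gamma(n+2)\Gamma(2-\alpha)}{\Gamma(n+2-\alpha)}t^n$, and a second application (the sequential composition) gives ${}^L\! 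D^{2\circ\alpha}x(t)=\sum_{n=0}^\infty x_{n+2}\frac{\Gamma(n+3)\Gamma(n+2)\Gamma(2-\alpha)^2}{\Gamma(n+3-\alpha)\Gamma(n+2-\alpha)}t^n$. Forming the two Cauchy products $p\cdot{}^L\! D^\alpha x$ and $q\cdot x$, collecting the coefficient of $t^n$, matching it with $c_n$ and solving for $x_{n+2}$ reproduces exactly~\eqref{vaig_vaigs}. Evaluating the two series at $t=0$ (with $\Gamma(2)=1$) identifies $x_0=x(0)$ and $x_1={}^L\! D^\alpha x(0)=x_{0,1}$, so these two coefficients are free and all others are determined recursively.

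The hard part will be the convergence of $\sum x_n t^n$; I would establish that its radius of convergence is at least $T$ by a majorant-type induction. Fix $R<T$ and pick $r$ with $R<r<T$; since the series in~\eqref{hypopopo} converge at $r$, there is $M>0$ with $|p_n|,|q_n|,|c_n|\le M r^{-n}$ for all $n$. The decisive structural fact is the size of the gamma weights: by the Stirling asymptotics~\eqref{saps_quina_e}, the prefactor $A_n=\frac{\Gamma(n+3-\alpha)\Gamma(n+2-\alpha)}{\Gamma(n+3)\Gamma(n+2)\Gamma(2-\alpha)^2}$ behaves like $n^{-2\alpha}$, whereas the interior weight $B_l=\frac{\Gamma(l+2)\Gamma(2-\alpha)}{\Gamma(l+2-\alpha)}$ grows only like $l^{\alpha}$, so the worst product $A_nB_n$ decays like $n^{-\alpha}$. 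This decay is exactly what makes the fractional analogue of the classical ODE argument work.

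I would then show by induction that $|x_n|\le K R^{-n}$ for a suitable $K\ge 1$. Assuming the bound for all indices up to $n+1$, I estimate $|x_{n+2}|\le |A_n|\,[\,\sum_{l=0}^n |p_{n-l}|\,B_l\,|x_{l+1}|+\sum_{l=0}^n|q_{n-l}|\,|x_l|+|c_n|\,]$, bound each Cauchy sum using $|p_{n-l}|\le M r^{-(n-l)}$ together with the geometric factor $(R/r)^{n-l}$ whose ratio $R/r<1$ renders the tails summable, and check that every resulting term, once divided by $R^{-(n+2)}$, carries a factor tending to $0$ as $n\to\infty$ (the surviving decays being $n^{-\alpha}$, $n^{-2\alpha}$ and $(R/r)^n$ from the three sums respectively). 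Since $K$ cancels in these ratios (and appears only as a helpful $1/K$ in the forcing term for $K\ge1$), there is $N_0$, independent of $K$, beyond which the induction step closes; one then chooses $K$ large enough to cover the finitely many indices up to $N_0+1$, including the prescribed $x_0,x_1$. Letting $R\uparrow T$ yields radius of convergence at least $T$, hence convergence of~\eqref{alca_cami} on $[0,T)$.

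Finally I would upgrade the formal identity to a genuine solution. Having absolute convergence of $\sum|x_n|t^n$ on every $[0,t_0]$ with $t_0<T$, Corollary~\ref{cor_D_s} legitimizes the term-by-term application of ${}^L\! D^\alpha$, and applying it twice (first to $x$, then to the convergent series ${}^L\! D^\alpha x$) legitimizes ${}^L\! D^{2\circ\alpha}$ as well; the same corollary guarantees the identity holds pointwise for every $t\in[0,t_0]$, not merely almost everywhere, so $x$ solves~\eqref{decl_juri} on all of $[0,T)$. Taking $(x_0,x_{0,1})=(1,0)$ and $(0,1)$ produces two homogeneous solutions whose initial data $(x(0),{}^L\! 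D^\alpha x(0))$ are linearly independent; since the initial-data map on $\mathcal{S}$ is injective and $\dim\mathcal{S}\le 2$ (Proposition~\ref{prpo_unic}), these two series form a basis of $\mathcal{S}$, completing the argument. I expect the convergence induction of the third paragraph to be the main obstacle, the rest being bookkeeping and appeals to results already proved.
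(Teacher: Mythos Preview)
Your proposal is correct and follows the same overall architecture as the paper: formal recurrence from term-by-term differentiation and Cauchy products, then a majorant argument for convergence, then Corollary~\ref{cor_D_s} to turn the formal solution into a genuine one, and finally the initial-data map for the basis statement.

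The one place where you genuinely diverge is the convergence proof. The paper does not do induction on a bound $|x_n|\le K R^{-n}$; instead it builds an explicit majorant sequence $H_n\ge |x_n|$ satisfying the full-history recurrence~\eqref{que_titulo_ti}, and then observes that subtracting $T^{-1}$ times the $(n-1)$-th instance of~\eqref{que_titulo_ti} from the $n$-th instance collapses the cumulative sums and yields a clean three-term relation $H_{n+2}=\alpha_n H_{n+1}+\beta_n H_n$. Multiplying by $v^{n+2}$ and passing to $K_n=\max_{l\le n}H_l v^l$, the paper shows $H_{n+2}v^{n+2}\le \gamma_n K_{n+1}$ with $\gamma_n\to v/T<1$, so $K_n$ is eventually constant; a final geometric comparison with $w<v$ gives absolute convergence on $[0,T)$. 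Your route instead keeps the full Cauchy sums and controls them directly via the geometric factor $(R/r)^{n-l}$ coming from the choice $R<r<T$, together with the gamma asymptotics to see that the worst term decays like $n^{-\alpha}$; you then use the standard ``the induction closes for $n\ge N_0$ with $N_0$ independent of $K$, and $K$ handles the first $N_0{+}1$ indices'' device. Both arguments are valid; the paper's telescoping trick is slicker and avoids estimating the whole convolution at every step, while yours is more elementary and makes the role of the gamma decay $A_nB_n\sim n^{-\alpha}$ completely transparent.
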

\begin{proof}
Given~\eqref{alca_cami}, the following L-fractional derivatives apply:
\[ {}^L\! D^{\alpha}x(t)=\sum_{n=0}^\infty x_{n+1}\frac{\Gamma(n+2)\Gamma(2-\alpha)}{\Gamma(n+2-\alpha)}t^n, \]
\[ {}^L\! D^{2\circ \alpha}x(t)=\sum_{n=0}^\infty x_{n+2}\frac{\Gamma(n+3)\Gamma(n+2)\Gamma(2-\alpha)^2}{\Gamma(n+3-\alpha)\Gamma(n+2-\alpha)}t^n, \]
see Corollary~\ref{cor_D_s}. Placing these derivatives into~\eqref{decl_juri}, with Cauchy products, and matching terms of the expansions, the recurrence relation~\eqref{vaig_vaigs} is obtained. It remains to check that the series~\eqref{alca_cami} actually converges on $[0,T)$.

Concerning~\eqref{hypopopo}, the coefficients are controlled as follows:
\[ |p_n|\leq \frac{C}{T^n},\quad |q_n|\leq \frac{C}{T^n},\quad |c_n|\leq \frac{C}{T^n}, \]
where $C>0$ is a constant. By the triangular inequality and induction, the sequence $\{x_n\}_{n=0}^\infty$ is ``majorized'' by
\begin{equation} H_{n+2}=\frac{\Gamma(n+3-\alpha)\Gamma(n+2-\alpha)}{\Gamma(n+3)\Gamma(n+2)\Gamma(2-\alpha)^2}\frac{C}{T^n}\left(\sum_{l=0}^n T^l\left[ H_{l+1} \frac{\Gamma(l+2)\Gamma(2-\alpha)}{\Gamma(l+2-\alpha)}+H_l\right]+1\right), \label{que_titulo_ti} \end{equation}
for $n\geq0$,
\[ H_0=|x_0|,\quad H_1=|x_1|. \]

By splitting the sum $\sum_{l=0}^n$ in~\eqref{que_titulo_ti} into $\sum_{l=0}^{n-1}$ and the $n$-th term, one derives
\begin{align*}
 H_{n+2}= {} & \left(\frac{\Gamma(n+3-\alpha)\Gamma(n+1)}{\Gamma(n+3)\Gamma(n+1-\alpha)T}+C\frac{\Gamma(n+3-\alpha)}{\Gamma(n+3)}\right)H_{n+1} \\
{} & + C\frac{\Gamma(n+3-\alpha)\Gamma(n+2-\alpha)}{\Gamma(n+3)\Gamma(n+2)\Gamma(2-\alpha)}H_n, 
\end{align*}
for $n\geq1$. Then, if we pick any $v\in (0,T)$,
\begin{align*}
 H_{n+2}v^{n+2}= {} & \left(\frac{\Gamma(n+3-\alpha)\Gamma(n+1)}{\Gamma(n+3)\Gamma(n+1-\alpha)T}v +C\frac{\Gamma(n+3-\alpha)}{\Gamma(n+3)}v\right)H_{n+1}v^{n+1} \\
{} & +C\frac{\Gamma(n+3-\alpha)\Gamma(n+2-\alpha)}{\Gamma(n+3)\Gamma(n+2)\Gamma(2-\alpha)}v^2 H_nv^n. 
\end{align*}

By letting
\[ K_n=\max_{0\leq l\leq n} H_l v^l, \]
one has the bound
\begin{align*}
 H_{n+2}v^{n+2}\leq {} & \left(\frac{\Gamma(n+3-\alpha)\Gamma(n+1)}{\Gamma(n+3)\Gamma(n+1-\alpha)T}v+C\frac{\Gamma(n+3-\alpha)}{\Gamma(n+3)}v \right. \\
{} & + \left. C\frac{\Gamma(n+3-\alpha)\Gamma(n+2-\alpha)}{\Gamma(n+3)\Gamma(n+2)\Gamma(2-\alpha)}v^2\right)K_{n+1}. 
\end{align*}

Since
\begin{align*}
 \lim_{n\rightarrow\infty} {} & \left(\frac{\Gamma(n+3-\alpha)\Gamma(n+1)}{\Gamma(n+3)\Gamma(n+1-\alpha)T}v+C\frac{\Gamma(n+3-\alpha)}{\Gamma(n+3)}v \right. \\
{} & + \left. C\frac{\Gamma(n+3-\alpha)\Gamma(n+2-\alpha)}{\Gamma(n+3)\Gamma(n+2)\Gamma(2-\alpha)}v^2\right)=\frac{v}{T}<1, 
\end{align*}
by~\eqref{saps_quina_e}, we deduce that $K_{n+2}=K_{n+1}=K$ from a certain $n\geq0$. As a consequence, if we take any $0\leq w<v<T$, then
\[ H_n w^n\leq H_n v^n \left(\frac{w}{v}\right)^n\leq L\left(\frac{w}{v}\right)^n. \]

Therefore,
\[ \sum_{n=0}^\infty H_n w^n<\infty. \]

This proves that~\eqref{alca_cami} converges on $[0,T)$, as wanted.

Concerning the basis of the homogeneous part (with $c_n=0$ for $n\geq0$), let $y$ and $z$ be series in $\mathcal{S}$ with initial terms $(x_0,x_{0,1})=(1,0)$ and $(x_0,x_{0,1})=(0,1)$, respectively. If $\beta_1 y+\beta_2z=0$ on $[0,T)$, for $\beta_1,\beta_2\in\mathbb{C}$, then
\[ 0=\beta_1 y(0)+\beta_2 z(0)=\beta_1 \]
and 
\[ 0=\beta_1 \cdot {}^L\! D^\alpha y(0)+\beta_2 \cdot {}^L\! D^\alpha z(0)=\beta_2, \]
so $\{y,z\}$ are linearly independent and form a basis of $\mathcal{S}$.
\end{proof}

\begin{Example} 
Let
\begin{equation} {}^L\! D^{2\circ \alpha}x(t) + at x(t)=0 \label{eiri} \end{equation}
be the L-fractional version of Airy's equation, where $a\in\mathbb{C}$. Here, $p=c=0$ and $q(t)=at$. By~\eqref{vaig_vaigs},
\[ x_2=0 \]
and
\[ x_{n+2}=-a\frac{\Gamma(n+3-\alpha)\Gamma(n+2-\alpha)}{\Gamma(n+3)\Gamma(n+2)\Gamma(2-\alpha)^2}x_{n-1}, \]
for $n\geq1$. This difference equation can be solved as follows:
\[ x_{3n-1}=0, \]
\[ x_{3n}=(-1)^n a^n \frac{\prod_{j=1}^n \Gamma(3j-\alpha)\Gamma(3j+1-\alpha)}{\Gamma(2-\alpha)^{2n} \prod_{j=1}^n \Gamma(3j)\Gamma(3j+1)}x_0, \]
\[ x_{3n+1}=(-1)^n a^n \frac{\prod_{j=1}^n \Gamma(3j+1-\alpha)\Gamma(3j+2-\alpha)}{\Gamma(2-\alpha)^{2n} \prod_{j=1}^n \Gamma(3j+1)\Gamma(3j+2)}x_{0,1}, \]
for $n\geq1$. Hence,
\[ y(t)=\sum_{n=0}^\infty (-1)^n a^n \frac{\prod_{j=1}^n \Gamma(3j-\alpha)\Gamma(3j+1-\alpha)}{\Gamma(2-\alpha)^{2n} \prod_{j=1}^n \Gamma(3j)\Gamma(3j+1)}t^{3n} \]
and
\[ z(t)=\sum_{n=0}^\infty (-1)^n a^n \frac{\prod_{j=1}^n \Gamma(3j+1-\alpha)\Gamma(3j+2-\alpha)}{\Gamma(2-\alpha)^{2n} \prod_{j=1}^n \Gamma(3j+1)\Gamma(3j+2)}t^{3n+1} \]
form the basis of solutions of~\eqref{eiri}, on $[0,\infty)$.
\end{Example}

\begin{Example} \label{exe_Hermi} 
Let
\begin{equation}
{}^L\! D^{2\circ \alpha}x(t)-2t\cdot {}^L\! D^{\alpha}x(t)+ax(t)=0 
 \label{hermit} 
\end{equation}
be the L-fractional Hermite's equation, where $a\in\mathbb{C}$. The input polynomials are $p(t)=-2t$, $q(t)=a$ and $c(t)=0$. According to~\eqref{vaig_vaigs},
\[ x_{n+2}=\frac{\Gamma(n+3-\alpha)\Gamma(n+2-\alpha)}{\Gamma(n+3)\Gamma(n+2)\Gamma(2-\alpha)^2}\left[2\frac{\Gamma(n+1)\Gamma(2-\alpha)}{\Gamma(n+1-\alpha)}-a\right]x_n, \]
for $n\geq0$. In closed form,
\[ x_{2n+1}=x_1 \frac{\prod_{i=3}^{2n+2} \Gamma(i-\alpha)}{\Gamma(2-\alpha)^{2n}\prod_{i=3}^{2n+2}\Gamma(i)}\prod_{i=1}^n\left( 2\frac{\Gamma(2i)\Gamma(2-\alpha)}{\Gamma(2i-\alpha)}-a\right) \]
and
\[ x_{2n}=x_0 \frac{\prod_{i=2}^{2n+1} \Gamma(i-\alpha)}{\Gamma(2-\alpha)^{2n}\prod_{i=2}^{2n+1}\Gamma(i)}\prod_{i=1}^n\left( 2\frac{\Gamma(2i-1)\Gamma(2-\alpha)}{\Gamma(2i-1-\alpha)}-a\right). \]

As a  consequence, the functions
\[ y(t)=\sum_{n=0}^\infty t^{2n+1}\frac{\prod_{i=3}^{2n+2} \Gamma(i-\alpha)}{\Gamma(2-\alpha)^{2n}\prod_{i=3}^{2n+2}\Gamma(i)}\prod_{i=1}^n\left( 2\frac{\Gamma(2i)\Gamma(2-\alpha)}{\Gamma(2i-\alpha)}-a\right) \]
and
\[ z(t)=\sum_{n=0}^\infty t^{2n}\frac{\prod_{i=2}^{2n+1} \Gamma(i-\alpha)}{\Gamma(2-\alpha)^{2n}\prod_{i=2}^{2n+1}\Gamma(i)}\prod_{i=1}^n\left( 2\frac{\Gamma(2i-1)\Gamma(2-\alpha)}{\Gamma(2i-1-\alpha)}-a\right) \]
form the basis of solutions of~\eqref{hermit}, on $[0,\infty)$. Notice that, if
\[ a=2\lambda, \]
where
\[ \lambda=\frac{\Gamma(i)\Gamma(2-\alpha)}{\Gamma(i-\alpha)},\quad i\geq 1,\quad i\in\mathbb{Z}, \]
then there exists a polynomial solution of~\eqref{hermit}:
\[ {}_N y(t)= \sum_{n=0}^N t^{2n+1}\frac{\prod_{i=3}^{2n+2} \Gamma(i-\alpha)}{\Gamma(2-\alpha)^{2n}\prod_{i=3}^{2n+2}\Gamma(i)}\prod_{i=1}^n\left( 2\frac{\Gamma(2i)\Gamma(2-\alpha)}{\Gamma(2i-\alpha)}-a\right), \]
\[ {}_N z(t)=\sum_{n=0}^N t^{2n}\frac{\prod_{i=2}^{2n+1} \Gamma(i-\alpha)}{\Gamma(2-\alpha)^{2n}\prod_{i=2}^{2n+1}\Gamma(i)}\prod_{i=1}^n\left( 2\frac{\Gamma(2i-1)\Gamma(2-\alpha)}{\Gamma(2i-1-\alpha)}-a\right), \]
for $N\geq0$. These polynomials extend, in a fractional sense, the classical Hermite's polynomials.
\end{Example}

\section{Open Problems} \label{sec_concl}

We broadly list some questions, which also highlight the limitations of the work:
\begin{itemize}
\item Would the L-fractional derivative have better performance than the Caputo fractional derivative in specific modeling problems? According to Section~\ref{sec_L} and Table~\ref{tab}, the L-fractional derivative and its associated differential equations have many appealing properties. For example, the solution is smooth, its ordinary derivative at the initial instant is finite, the vector field of the equation is a velocity with units of time$^{-1}$, and a differential can be associated with the fractional derivative. The appropriateness of the L-fractional derivative shall be checked with applied models, simulations, and fitting to real data, beyond purely theoretical work.
\item Is it possible to derive more formulas, improper/contour integral representations, applications, and numerical algorithms for the new Mittag--Leffler-type function~\eqref{mlf2}? Obviously, the classical Mittag--Leffler function~\eqref{mlf} is much more developed theoretically.
\item Can the ``almost everywhere'' condition in the fundamental theorem of L-fractional calculus (and in Caputo fractional calculus) be weakened? (See Lemma~\ref{lema_rigor_FC_Cap} and Proposition~\ref{propiidsf}.) We know that, for analytic functions and variations of them, the fundamental theorem of L-fractional calculus holds at every point $t$, not just almost everywhere (Corollary~\ref{cor_D_s} and Lemma~\ref{lema_agafa_clau}). Analogously, for fractional analytic functions, the fundamental theorem of Caputo fractional calculus is satisfied at every point $t$, not only almost everywhere (Remark~\ref{cor_D_s_remarK}), hence the potential of power-series expansions in fractional calculus, both for applications and theory. However, it would be of relevance to investigate whether there exists a larger class of functions for which there is equality at every $t$. We highlight the need to conduct rigorous computations in fractional calculus to make it clear what kind of solutions one   obtains (an everywhere solution, an almost-everywhere solution, a solution to the fixed-point integral problem, a solution to the modified Caputo equation, etc.; see Remark~\ref{rmk_villi_rel}, for example).
\item Is it possible to find closed-form expressions for the composed integral operator ${}^L\! J^{m\circ\alpha}$? A probabilistic structure was given to ${}^L\! J^{m\circ\alpha}$ depending on beta-distributed delays (Section~\ref{link_prrrp}), and expressions were obtained for source terms based on power functions (Section~\ref{subs_jardinir}). We wonder whether ${}^L\! J^{m\circ\alpha}$ could be given as a convolution, like in the Caputo case, and whether the solution $x(t)$ would depend on some new two-parameter Mittag--Leffler-type function.
\item Would the Laplace transform have any role when solving L-fractional differential equations? The power-series method is a powerful tool for L-fractional differential equations, by the analyticity of the solutions. However, the use of the Laplace transform has not been checked. The increase in the nonlinearity in the equation with $t^{1-\alpha}$ may complicate the applicability or the usefulness of the transform. Furthermore, the use should be precise, under appropriate hypotheses.
\item Can the probability link (Section~\ref{link_prrrp}) established in the paper help understand and generalize the concept of fractional derivative more? The L-fractional derivative and the associated integral operator distribute the past time with a beta distribution. Hence, the L derivative includes history's effects on the model, according to a fixed probability law. For the fractional order~$1$, the ordinary derivative is local, while the time of the Riemann integral is distributed uniformly. Given an interval, the uniform distribution maximizes the Shannon entropy, so the benefits of the fractional derivative in terms of memory terms shall be investigated.
\item Can the new Mittag--Leffler-type function~\eqref{mlf2} be used in other settings as a substitute for the exponential function, for example, to define novel probability distributions, such as a ``Poisson distribution'' with mass function related to the Mittag--Leffler-type function, or to study partial differential equations with exponentials involved, such as the heat equation? In the fractional case, the new Mittag--Leffler function would emerge.
\item Can we expect (Section~\ref{link_prrrp}) a better characterization of the finiteness of the fractional moment-generating function of random variables? One would probably need to apply the Cauchy--Hadamard theorem adequately, instead of the ratio test. Since the new Mittag--Leffler-type function is defined with products of gamma functions, the ratio test is the most straightforward tool to analyze the convergence of the series. On the other hand, the fractional moment-generating function may be of use to study some stochastic/random fractional differential equations.
\item Can the theory on $m$-th order autonomous linear equations be generalized to variable coefficients? Is it possible to find a variation-of-constants formula when forcing terms are present? This new research would continue the results from Section~\ref{sec_sequ}.
\item Can we build a theory about L-fractional dynamical systems? The corresponding fractional exponential, which is the proposed Mittag--Leffler-type function~\eqref{mlf2}, should play a key role, as it solves the linearized problem. The monotonicity and asymptotic properties of the new function shall be investigated. Relevant applications, such as the study of the L-fractional SIR (susceptible--infected--recovered) epidemiological model, would come up.
\item Is the theory on linear L-fractional differential equations with analytic coefficients extensible to the case of regular singular points? The problems are that changes in the variable and the product rule for the fractional derivative are not amenable to computing. This new research would continue the results from Section~\ref{sec_sequ_AA}.
\item What are the properties of the fractional Hermite's polynomial defined in Example~\ref{exe_Hermi}? Do they satisfy certain formulas or orthogonality conditions? A similar analysis would yield fractional Legendre's polynomials, fractional Laguerre's polynomials, and so on.
\item Does it make sense to rescale other fractional derivatives? For example, we commented that the $\Lambda$-fractional derivative normalizes the Riemann--Liouville derivative, and it shall be investigated mathematically. Would fractional operators $D^\alpha$ with continuous or bounded kernels improve their applicability if a factor $(D^\alpha t)^{-1}$ is included?
\item Can we explicitly solve other models, with nonlinearities, under the L-fractional derivative? With the experience of the Caputo derivative, the main tool shall be the power-series method, under analytic inputs. The solution will be local, as predicted by the Cauchy--Kovalevskaya theorem. It will be well defined and pointwise, according to Lemma~\ref{lema_rigor_FC_Cap}, Proposition~\ref{propiidsf}, Corollary~\ref{cor_D_s}, and Remark~\ref{cor_D_s_remarK}.
\item Finally, what about fractional partial differential equations? There are no studies for the L-fractional derivative. In the Caputo context, formal solutions have been found in terms of bivariate fractional power series, but rigorous theorems are yet to  be investigated.
\end{itemize}

\section*{Funding}
This research received no external funding.

\section*{Data Availability Statement}
No new data were created or analyzed in this study. Data sharing is not applicable to this article.

\section*{Conflict of interest}
The author declares that there are no conflicts of interest.


\end{document}